\DeclareMathOperator{\dom}{dom}
\DeclareMathOperator{\rng}{rng}
\newtheorem{lemma}{Lemma}[section]
\newtheorem{theorem}[lemma]{Theorem}
\newtheorem{corollary}[lemma]{Corollary}
\newtheorem{claim}{Claim}
\theoremstyle{definition}
\newtheorem{definition}[lemma]{Definition}
\newtheorem*{question}{Question}
 \newenvironment{claimproof}{\begin{proof}}{\end{proof}}
\mathchardef\mhyphen="2D
\begin{document}
\author{Henry Towsner}
\revauthor{Towsner, Henry}
\title{Partial Impredicativity in Reverse Mathematics}
\thanks{Partially supported by NSF grant DMS-1157580.}
\address {Department of Mathematics, University of Pennsylvania, 209 South 33rd Street, Philadelphia, PA 19104-6395, USA}
\email{htowsner@math.upenn.edu}
\urladdr{www.math.upenn.edu/~htowsner}

\newcommand{\RCA}{\ensuremath{\mathbf{RCA_0}}}
\newcommand{\ACA}{\ensuremath{\mathbf{ACA_0}}}
\newcommand{\ATR}{\ensuremath{\mathbf{ATR_0}}}
\newcommand{\RT}{\ensuremath{\mathbf{RT^2_2}}}
\newcommand{\Pioo}{\ensuremath{\mathbf{\Pi^1_1\mhyphen CA_0}}}
\newcommand{\Pioom}{\ensuremath{\mathbf{\Pi^1_1\mhyphen CA^-_0}}}
\newcommand{\Piti}[1]{\ensuremath{\mathbf{\Pi^1_{#1}\mhyphen TI_0}}}
\newcommand{\SPiti}[1]{\ensuremath{\mathbf{\Pi}_{#1}\mathbf{(\Pi^1_1)\mhyphen TI_0}}}
\newcommand{\ATRi}[1]{\ensuremath{\mathbf{ATR}_{\mathbf{0}}^{#1}}}
\newcommand{\AutATR}{\ensuremath{\mathbf{Aut-ATR_0}}}
\newcommand{\SDC}{\ensuremath{\mathbf{\Sigma^1_1\mhyphen DC_0}}}

\newcommand{\HT}{\ensuremath{\mathbf{HT}}}
\newcommand{\KT}{\ensuremath{\mathbf{KT}}}
\newcommand{\GHT}{\ensuremath{\mathbf{GHT}}}
\newcommand{\NWT}{\ensuremath{\mathbf{NWT}}}
\newcommand{\LPP}{\ensuremath{\mathbf{LPP_0}}}
\newcommand{\MPP}{\ensuremath{\mathbf{MPP_0}}}
\newcommand{\RMPP}[1]{\ensuremath{\Sigma_{#1}\mhyphen \mathrm{MPP}}}
\newcommand{\RLPP}[1]{\ensuremath{\Sigma_{#1}\mhyphen \mathrm{LPP}}}
\newcommand{\RMPPO}[1]{\ensuremath{\mathbf{\Sigma}_{#1}\mathbf{\mhyphen MPP_0}}}
\newcommand{\RLPPO}[1]{\ensuremath{\mathbf{\Sigma}_{#1}\mathbf{\mhyphen LPP_0}}}
\newcommand{\TLPP}{\ensuremath{\mathbf{TLPP_0}}}
\newcommand{\TMPP}{\ensuremath{\mathbf{TMPP_0}}}

\newcommand{\Pica}[1]{\ensuremath{\mathbf{\Pi^1_1-CA^-_0([1])}}}

\newcommand{\field}{\mathrm{field}}

\begin{abstract} 
In reverse mathematics, it is possible to have a curious situation where we know that an implication does not reverse, but appear to have no information on how to weaken the assumption while preserving the conclusion (other than reducing all the way to the tautology of assuming the conclusion).  A main cause of this phenomenon is the proof of a $\Pi^1_2$ sentence from the theory {\Pioo}.  Using methods based on the functional interpretation, we introduce a family of weakenings of {\Pioo} and use them to give new upper bounds for the Nash-Williams Theorem of wqo theory and Menger's Theorem for countable graphs.
\end{abstract}

\maketitle


\section{Introduction}
The strongest of the ``big five'' systems of Reverse Mathematics is the system {\Pioo}, whose defining axiom, $\Pi^1_1$ comprehension, states that
\[\exists X\forall n[n\in X\leftrightarrow\forall Y\phi(n,Y)]\]
where $\phi$ is an arithmetic formula (that is, a formula without set quantifiers).  This axiom is \emph{impredicative}: the set $X$ is defined in terms of a quantifier over all sets, particularly including the set $X$ itself and sets which may be defined in terms of $X$.

It is impossible for a $\Pi^1_2$ sentence to be equivalent to {\Pioo} (see \cite[Corollary 1.10]{marcone:MR1428011} for a proof); this means that any proof of a $\Pi^1_2$ sentence in {\Pioo} can be optimized to go through in some weaker system.  Despite this, {\Pioo} is the best known upper bound for several $\Pi^1_2$ theorems (in particular, the Nash-Williams Theorem\footnote{Actually, the Nash-Williams Theorem is not $\Pi^1_2$, but rather can be deduced in {\ATR} from a $\Pi^1_2$ sentence provable in \Pioo.} of bqo theory \cite{marcone:MR1428011} and Menger's Theorem for countable graphs \cite{MR2899698}; rather than give the definitions necessary to state these theorems here, they are discussed in detail below).

In this paper, we attempt to resolve this situation in a systematic way: using ideas derived from the functional interpretation, we isolate the portion of {\Pioo} actually being used in these proofs, giving a family of weaker systems with $\Pi^1_2$ axioms, and then show that the proofs in {\Pioo} actually go through, essentially unchanged, in these weaker systems.

Rather than being based on the $\Pi^1_1$ comprehension axiom, we base our systems on the equivalent leftmost path principle:
\begin{quote} 
Let $T$ be an ill-founded tree.  Then there is a leftmost path through $T$.
\end{quote}
Our family of weaker systems use the $\Sigma_\alpha$-relative leftmost path principle:
\begin{quote} 
Let $T$ be an ill-founded tree.  Then there is a path $\Lambda$ through $T$ such that no path through $T$ is both $\Sigma_\alpha$ in $T\oplus\Lambda$ and to the left of $\Lambda$.
\end{quote}
We define \RLPPO{\alpha} to be {\RCA} extended by the $\Sigma_\alpha$-relative leftmost path principle \RLPP{\alpha}, and {\TLPP} to be {\RCA} extended by $\Sigma_\alpha$-relative leftmost path principle for every well-ordering $\alpha$.  Note that these formulations are still fundamentally impredicative: the path $\Lambda$ promised to exist is still to the left of paths which might be defined in terms of $\Lambda$ itself.  However the impredicativity is ``partial'' in the sense that we have restricted, in advance, the complexity of the operations which will be might be used to define paths to the left of $\Lambda$.

Our main results can be summarized as:
\begin{theorem}\ 
\begin{enumerate} 
  \item \RLPPO{0} implies {\ATR} (Theorem \ref{rlpp_atr}).
  \item \RLPPO{2} proves Kruskal's Theorem (Theorem \ref{rmpp_kruskal}).
  \item $\mathbf{A\Pi^1_1\mhyphen TI_0}$ (see \cite[Chapter VII.2]{simpson99}) implies \RLPPO{<\omega} (Theorem \ref{spiti_rmpp}).
  \item {\TLPP} proves the Nash-Williams Theorem (Corollary \ref{tmpp_nwt}).
  \item {\TLPP} proves Menger's Theorem for countable graphs (Theorem \ref{tmpp_menger}).
  \item If {\RLPP{\alpha+2}} holds in a model of {\RCA} then there is an $\omega$-model satisfying \SPiti{\alpha} (Theorem \ref{rlpp_omega_spiti}),
  \item In a model satisfying \SPiti{\alpha+2} and $WO(\alpha)$ with $\alpha$ a successor, \RLPP{\alpha} holds (Theorem \ref{rmpp_upper_bound}).
  \end{enumerate}
\end{theorem}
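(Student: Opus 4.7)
Each of the seven clauses is established as a separate theorem later in the paper; I describe here the overall proof plan. For clause (1), my plan is to adapt the standard proof that the full leftmost path principle implies {\ATR}: the sets used to simulate transfinite recursion along a well-ordering are arithmetic in the given data together with the chosen path, so the $\Sigma_0$-relative minimality provided by $\RLPP{0}$ is already strong enough to rule out any competing hierarchy. For clauses (6) and (7), the plan is a two-sided calibration. Given a model of $\RLPP{\alpha+2}$, I would iterate the relative leftmost path principle along ordinals below $\alpha$ to construct an $\omega$-model satisfying $\SPiti{\alpha}$, with the two extra levels of arithmetic complexity paying for the cost of checking well-foundedness of the approximating trees. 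Conversely, in a model of $\SPiti{\alpha+2}$ together with $WO(\alpha)$, I would define the required relative leftmost path by transfinite recursion along $\alpha$, using the available transfinite induction to verify at each stage that no $\Sigma_\alpha$ competitor lies to the left of the path constructed so far.

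For clauses (2), (4), and (5), the plan is to revisit the known proofs of Kruskal, Nash-Williams, and Menger in {\Pioo} --- as in \cite{marcone:MR1428011} and \cite{MR2899698} --- and locate precisely where the leftmost path principle is invoked and at what arithmetic complexity. For Kruskal's Theorem a single $\Sigma_2$ minimal-bad-sequence argument is all that is needed, yielding $\RLPPO{2}$. The Nash-Williams and Menger proofs, by contrast, iterate such arguments along ordinals whose length is not known in advance, so they invoke $\RLPP{\alpha}$ for cofinally many $\alpha$, which is exactly the content of {\TLPP}. Clause (3) will be handled by pushing the construction of relative leftmost paths through the hyperarithmetic hierarchy available inside $\mathbf{A\Pi^1_1\mhyphen TI_0}$, peeling off one arithmetic level at a time using the transfinite induction that system supplies.

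I expect the main obstacle to be clauses (4) and (5). The existing proofs of Nash-Williams and Menger invoke {\Pioo} through minimal bad sequences and related ``leftmost'' constructions at complexities that are not explicitly tracked in the literature, and showing that these constructions can be stratified so that only a relative $\RLPP{\alpha}$ is required at each stage --- rather than full $\Pi^1_1$ comprehension --- will require a delicate re-examination of the inductive structure of those proofs. Clauses (6) and (7) also demand care, since the ``$+2$'' shift in complexity must match the hierarchy being built, but once the correct definitions are in place these should follow the standard pattern from the functional-interpretation literature alluded to in the introduction.
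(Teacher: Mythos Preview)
Your plans for clauses (1) and (2) match the paper. For the remaining clauses there are genuine gaps.

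For (3) and (7), you cannot construct the relative leftmost path by recursion along $\alpha$: the condition on $\Lambda$---that no $\Sigma_\alpha$-in-$T\oplus\Lambda$ path lies to its left---refers to $\Lambda$ globally, so there is no way to commit to $\Lambda\upharpoonright n$ before all of $\Lambda$ is known. The paper argues contrapositively. It builds an auxiliary tree $\widehat{\mathcal{T}_\alpha(T)}^+$ whose nodes package a finite segment of a candidate $\Lambda$ together with finite guesses at the truth values of rank-$\leq\alpha$ formulas and explicit ``exclusion witnesses'' showing that finitely many $\Sigma_\alpha$ formulas fail to define a path to the left. Any infinite path through this tree deciding everything yields a genuine relative leftmost path in $T$. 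If no such path exists, one introduces a tower of properties $WF_\gamma$ (each $\Pi_{\gamma+1}(\Pi^1_1)$) and uses transfinite induction---along the well-founded auxiliary tree, not along $\alpha$---to conclude that $T$ itself is well-founded. This is the functional-interpretation idea alluded to in the introduction, and it is not visible in a recursion-along-$\alpha$ plan.

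For (6), the paper does not iterate the principle along ordinals below $\alpha$: it makes a \emph{single} application of $\RLPP{\alpha+2}$ to one tree whose paths directly code entire $\omega$-models, with designated columns for arithmetic closure and for descending sequences through putatively ill-founded orders; the relatively leftmost such path already codes a model satisfying $\SPiti{\alpha}$. Clauses (4) and (5) are likewise single applications, not iterations. For Nash-Williams (via the Generalized Higman Theorem), the level $\alpha$ is fixed in advance from the barrier $B$ by the effective clopen Ramsey theorem---the competing bad array is computable from a homogeneous set that is $\Sigma_\alpha$ in the coloring---so one use of $\RMPP{\alpha}$ suffices, and $\TLPP$ is needed only because $B$ is arbitrary. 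For Menger, the paper first uses $\TLPP$ once to manufacture a countably coded $\omega$-model of $\SDC$ containing a $\prec$-minimal wave, and then runs Shafer's argument unchanged inside that model.
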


Unlike the old upper bounds, we do not know of any theoretical obstacle to having a reversal of either the Nash-Williams Theorem or Menger's Theorem for countable graphs to \TLPP.  However the best known lower bound remains \ATR.  It is therefore natural to ask:
\begin{question}
Does either the Nash-Williams Theorem or Menger's Theorem for countable graphs imply \TLPP{} over \ATR{}?
\end{question}

We emphasize that this paper does not give novel proofs of any of the mathematical theorems analyzed; our proof of Kruskal's Theorem is unchanged from Nash-Williams' proof \cite{nash_williams:MR0153601}, our proof of the Nash-Williams Theorem is taken from Marcone's work \cite{marcone:MR1428011}, and our proof of Menger's Theorem is the one given by Shafer \cite{MR2899698}.  Our goal is to illustrate that the methods here isolate the portion of {\Pioo} already being used in existing proofs, without requiring changes to the proofs themselves.

We briefly explain the motivation for the relative leftmost path principle.  The leftmost path principle is a $\Pi^1_3$ sentence.  Consider the analogous situation at the arithmetic level, a $\Pi^0_3$ sentence:
\[\sigma=\forall x\exists y\forall z\phi(x,y,z).\]
If we prove a $\Pi^0_2$ sentence $\tau$ using $\sigma$, we do not expect to need the full strength of $\sigma$ in the proof.  The functional interpretation (see \cite{avigad:MR1640329,kohlenbach:MR2445721}) can be used to extract a function $F$ from the proof of $\sigma\rightarrow\tau$ together with a proof of
\[\left[\forall x\exists y' \forall z\leq F(x,y') \phi(x,y',z)\right]\rightarrow\tau.\]
Informally, a proof of $\sigma\rightarrow\tau$ in a reasonable system cannot actually use the fact that the witness $y(x)$ to $\sigma$ is a genuine witness for all $z$; the proof only used the fact that $y(x)$ is a witness for finitely many particular choices of $z$ (where the particular choices may depend on the value of $y(x)$), and therefore it suffices to use an ``approximate witness'' $y'$ which good enough for this particular proof.

The relative leftmost path principle follows a similar justification.  A proof of a $\Pi^1_2$ sentence from the leftmost path principle cannot depend on having an actual leftmost path; instead, given a supposed leftmost path $\Lambda$, the proof must produce some (now countable instead of merely finite) list of paths (again, depending on $\Lambda$), and use the fact that none of these paths are actually to the left of $\Lambda$.  An appropriate form of the relative leftmost path principle then gives us an ``approximate witness'' which is good enough for a particular proof.  (This analogy between set and numeric quantifiers is a bit misleading if taken too seriously; the arguments given in this paper are actually derived from a functional interpretation for quantifiers over ordinals \cite{avigad:MR2583811}.)

We end the introduction with a short discussion of the proof-theoretic strength of {\TLPP}.  We wish to avoid the technicalities of ordinal analysis in this paper, and nothing else in the paper depends on these comments, so we will be somewhat informal.  The theories \SPiti{\alpha} are well-suited to ordinal analysis (for instance, $\Pi_\alpha(\Pi^1_1)$ formulas embed naturally in the framework of ramified set theory used in \cite{pohlers_handbook}; alternatively, an ordinal analysis could be given by a transfinite generalization of the analysis in \cite{rathjen:MR1212407}).  The results described above show that the proof-theoretic ordinal of {\TLPP} is the smallest $\gamma>0$ such that whenever $\alpha<\gamma$, the proof-theoretic ordinal of \SPiti{\alpha} is also less than $\gamma$.

Since already $\Sigma_{<\omega}\mhyphen\mathrm{LPP}$ implies $\mathbf{A\Pi^1_1\mhyphen TI_0}$, whose proof-theoretic ordinal is the Howard-Bachmann ordinal, the consistency strength of {\TLPP} lies somewhere above the Howard-Bachmann ordinal.  Recall that the usual ordinal notation for the Howard-Bachmann ordinal is $\psi\epsilon_{\Omega_1+1}$ (this notation is explained in detail in \cite{pohlers_handbook}, but note that $\epsilon_\alpha$ is the $\alpha$-th $\epsilon$ number, where $\epsilon_0$ is the proof-theoretic ordinal of Peano arithmetic and \ACA).  The theory {\Pioom}, which adds parameter-free $\Pi^1_1$ comprehension to {\ACA}, has the same proof theoretic ordinal.  If one instead adds parameter-free $\Pi^1_1$ comprehension to {\ATR}, one obtains a theory with proof-theoretic ordinal $\psi\Gamma_{\Omega_1+1}$, where $\Gamma_\alpha$ is the $\alpha$-th fixed point of the Veblen function; most importantly, $\Gamma_0$ is the ordinal of {\ATR}.  (The definition of the collapsing function $\psi$ has to be adjusted to accomodate the presence of the Veblen function, so this notation requires some additional work to make precise.)  Inspection of the embedding of \SPiti{\alpha} into the framework of \cite{pohlers_handbook} shows that the proof-theoretic ordinal of {\TLPP} is at most $\psi\Gamma_{\Omega_1+1}$.  In particular, while the consistency strength of {\TLPP} is above the Howard-Bachmann ordinal, it still requires only one level of impredicativity, whereas {\Pioo} requires $\omega$ levels of impredicativity.

The author thanks Stephen Simpson, Reed Solomon, and the anonymous referees for many helpful suggestions.

\section{Notation}
We briefly recall some notation which will be convenient to use throughout this paper.

We fix, throughout this paper, a computable bijective pairing function $(\cdot,\cdot):\mathbb{N}^2\rightarrow\mathbb{N}$.  We routinely view subsets $S$ of $\mathbb{N}$ as subsets of $\mathbb{N}^2$ by equating $S$ with the set of pairs $x,y$ such that $(x,y)\in S$.

\begin{definition} 
If $S\subseteq\mathbb{N}^2$, we write $\field(S)$ for $\{x\mid\exists y\ (x,y)\in S\text{ or }(y,x)\in S\}$.  We often write $xSy$ for $(x,y)\in S$.  We write $S_x$ for $\{y\mid (x,y)\in S\}$.

By a \emph{partial order}, we mean a set $\prec\subseteq\mathbb{N}^2$ such that:
\begin{enumerate} 
  \item If $x\prec y$ and $y\prec z$ then $x\prec z$,
  \item $x\not\prec x$ for any $x$.
\end{enumerate}


We say $\prec$ is a \emph{linear ordering} if for every $x,y\in\field(\prec)$, either $x\prec y$, $x=y$, or $y\prec x$.

When $\prec$ is a partial order, we write $\preceq$ for the reflexive closure of $\prec$.
\end{definition}
We always use $<$ to denote the usual ordering on $\mathbb{N}$.  We often refer to orderings by a name for the field of the ordering, leaving the underlying order implicit.  For instance, we will refer to a linear order $\alpha$, and to the actual relation as $\prec_\alpha$.

\begin{definition} 
A \emph{sequence from $S$} is a function from a (proper or improper) initial segment of $\mathbb{N}$ to $S$.  A \emph{finite sequence} is a sequence whose domain is finite while an \emph{infinite sequence} is a sequence whose domain is $\mathbb{N}$.  For any $n$, we write $\sigma\upharpoonright n$ for $\sigma\upharpoonright[0,n]$.  We write $\langle q_0,\ldots,q_n\rangle$ for the sequence with $\dom(\langle q_0,\ldots,q_n\rangle)=[0,n]$ and $\langle q_0,\ldots,q_n\rangle(i)=q_i$.  When $\sigma$ is a finite sequence, we write $|\sigma|$ for $|\dom(\sigma)|$.

If $\sigma,\tau$ are sequences, we write $\sigma\sqsubseteq\tau$ to indicate that $\dom(\sigma)\subseteq\dom(\tau)$ and for all $i\in\dom(\sigma)$, $\sigma(i)=\tau(i)$.  If $\sigma$ is a finite sequence, we write $\sigma^\frown\tau$ for the concatenation of $\sigma$ and $\tau$: $\dom(\sigma^\frown\tau)=\dom(\sigma)\cup\{|\sigma|+n\mid n\in\dom(\tau)\}$, $(\sigma^\frown\tau)(i)=\sigma(i)$ if $i<|\sigma|$ and $(\sigma^\frown\tau)(i)=\tau(i-|\sigma|)$ if $i\geq|\sigma|$.

If $\prec$ is a partial order on $S$, we extend $\prec$ to sequences from $S$ by setting $\sigma\prec\tau$ if there is an $\upsilon^\frown\langle n\rangle\sqsubset\sigma,\upsilon^\frown\langle m\rangle\sqsubset\tau$ where $n\prec m$.
\end{definition}
We generally use letters $\sigma,\tau$ for finite sequences and $\Lambda$ for infinite sequences.

\begin{definition} 
If $\prec$ is a partial order, we say $\prec$ is \emph{well-founded}, sometimes written $WF(\prec)$, if there is no infinite sequence $\Lambda$ such that $\Lambda(i+1)\prec\Lambda(i)$ for all $i$.  If $\prec$ is both well-founded and linearly ordered, we say $\prec$ is \emph{well-ordered}, written $WO(\prec)$.  We generally assume that $0$ is the least element of $\prec$.

If $\prec$ is not well-founded, $\prec$ is \emph{ill-founded}.
\end{definition}
Any element $\gamma\in\field(\prec_\alpha)=\alpha$ induces a new partial order, the restriction of $\prec_\alpha$ to $\{\delta\in\field(\alpha)\mid\delta\prec_\alpha\gamma\}$.  We sometimes use $\gamma$ to refer to both the (number coding the) element of $\alpha$ and to the partial order given by the set.

\begin{definition} 
A \emph{tree} a set $T$ of finite sequences such that if $\sigma\in T$ and $\tau\sqsubseteq\sigma$ then $\tau\in T$.  A \emph{path} through $T$ is an infinite sequence $\Lambda$ such that for all $n$, $\Lambda\upharpoonright n\in T$.

We say $T$ is \emph{well-founded} if there does not exist a path through $T$.
\end{definition}
Equivalently, $T$ is well-founded iff the restriction of $\sqsupset$ to $T$ is a well-founded partial order.

We make extensive use in this paper of the standard systems of Reverse Mathematics, particularly {\RCA}, {\ACA}, {\ATR}, and {\Pioo}.  \cite{simpson99} is the standard reference.

\begin{definition} 
If $Y$ is a set and $\prec$ is a partial order, for any $j\in\field(\prec)$ we write $(Y)^j=\{(m,i)\in Y\mid i\prec j\}$ and $(Y)_j=\{m\mid (m,j)\in Y\}$.

If $\theta(x,Y,\vec z,\vec Z)$ is a formula with the displayed free variables, we write $H_\theta(\alpha,Y,\vec z,\vec Z)$ for the formula which says that for every $j\in\alpha$, $(Y)_j=\{x\mid \theta(x,(Y)^j,\vec z,\vec Z)\}$.  When $\theta$ is a universal $\Sigma_1$ formula, we just write $H(\alpha,Y, Z)$, omitting the other parameters.
\end{definition}
When we are dealing with an $\omega$-model and $\alpha$ is a computable well-ordering, $H(\alpha,Y,Z)$ just means that $Y=Z^{(\alpha)}$.  Recall that the main axiom of {\ATR} is $\forall \vec z\forall \vec Z\forall\alpha(WO(\alpha)\rightarrow\exists Y H_\theta(\alpha,Y,\vec z,\vec Z))$.

\begin{definition} 
If $\phi$ is a formula, $TI(\alpha,\phi)$ is the formula stating that transfinite induction for $\phi$ holds along $\alpha$:
\[\forall x\in\field(\alpha)\left[\forall y\prec_\alpha x \phi(y)\rightarrow\phi(x)\right]\rightarrow\forall x\in\field(\alpha) \phi(x).\]
\end{definition}
When $W$ is a set, we write $TI(\alpha,W)$ for $TI(\alpha,x\in W)$.

\section{Principles and Claims}
In this section we introduce the main principles we will work with through the rest of this paper.

\begin{definition} 
Let $T$ be a tree and let $\prec$ be a partial order.  A path $\Lambda$ through $T$ is \emph{minimal} (with respect to $\prec$) if there is no path $\Lambda'$ through $T$ such that $\Lambda'\prec\Lambda$.

{\MPP} is the theory consisting of {\RCA} together with the \emph{minimal path principle}:
\begin{quote} 
If $T$ is an ill-founded tree and $\prec$ is well-founded then there exists a minimal path through $T$.
\end{quote}

{\LPP} is {\RCA} together with the restriction of the minimal path principle to the case where $\prec$ is the usual ordering $<$ on the natural numbers.  We call this the \emph{leftmost path principle}.
\end{definition}

We will later show that the minimal and leftmost path principles are equivalent (Theorem \ref{tmpp_is_tlpp}), and in a computable way, so in all the variants we introduce, there will be no difference between the minimal and leftmost versions.

The following is proved in \cite{marcone:MR1428011}:
\begin{theorem}[\RCA]
{\LPP} is equivalent to \Pioo.
\end{theorem}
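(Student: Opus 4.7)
The proof has two directions. For \Pioo$\Rightarrow$\LPP: given an ill-founded tree $T$, apply $\Pi^1_1$ comprehension to form $W=\{\sigma\in T\mid T_\sigma\text{ is well-founded}\}$, where $T_\sigma=\{\tau\mid\sigma^\frown\tau\in T\}$ and membership in $W$ is $\Pi^1_1$ in $T$. Define the leftmost path $\Lambda$ greedily: given $\sigma\sqsubset\Lambda$ with $\sigma\notin W$, extend by the least $k$ such that $\sigma^\frown\langle k\rangle\in T\setminus W$. Since $\langle\rangle\notin W$ (by ill-foundedness of $T$), and any $\sigma\notin W$ has at least one immediate successor outside $W$, the recursion yields a genuine path. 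A strictly leftward competitor would, at the first point of disagreement, pick some $k'$ smaller than $\Lambda$'s value with $\sigma^\frown\langle k'\rangle$ extendible to an infinite path, contradicting our choice of least $k$.

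For \LPP$\Rightarrow$\Pioo: by a standard normal form in \RCA{}, it suffices, given uniformly recursive trees $(T_n)_{n\in\mathbb{N}}$, to form the set $\{n\mid T_n\text{ is well-founded}\}$. I would construct a single recursive tree $T$ whose infinite paths biject with pairs $(f,(p_n)_{f(n)=0})$, where $f\colon\mathbb{N}\to\{0,1\}$ and each $p_n$ is an infinite path through $T_n$. The crucial feature of the coding is that the ``$f(n)$''-slot occurs strictly earlier in the underlying sequence than any position contributing to $p_n$, and that $0$ is the leftmost legal value at an $f$-slot, so leftmost-ness at an $f(n)$-slot strictly prefers declaring $T_n$ ill-founded. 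The tree $T$ itself is ill-founded because $f\equiv 1$ imposes no side conditions. Apply \LPP{} to obtain a leftmost path $\Lambda$, and decode $X=\{n\mid f_\Lambda(n)=0\}$ in \RCA. If $n\in X$, then $\Lambda$ directly encodes an infinite path through $T_n$; conversely, if $T_n$ were ill-founded while $f_\Lambda(n)=1$, flipping the $n$-th $f$-slot to $0$ and filling the newly activated witness slots with any path through $T_n$ would produce a path through $T$ strictly to the left of $\Lambda$, contradicting minimality. Hence $X=\{n\mid T_n\text{ is ill-founded}\}$, whose complement is the desired $\Pi^1_1$ set.

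The main obstacle is designing the interleaving coding in the second direction so that (i) both $T$ and the decoding $\Lambda\mapsto f_\Lambda$ are primitive recursive, and (ii) the modification used in the minimality argument leaves all $f$-slots for indices $m\neq n$ untouched, so that the altered path really is strictly to the left of $\Lambda$. Once the coding is laid out (e.g., devoting position $\langle n,k\rangle$ of a path to the $k$-th digit of $p_n$ and position $\langle n,0\rangle$ to the value $f(n)$, after shifting $f$-slots to dominate lexicographically), the verification is routine inside \RCA.
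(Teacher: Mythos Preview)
The paper does not supply its own proof of this theorem; it simply attributes the result to Marcone \cite{marcone:MR1428011} and states it. So there is nothing in the paper to compare your argument against directly.

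That said, your sketch is the standard argument and is correct in outline. The direction $\Pioo\Rightarrow\LPP$ via the $\Pi^1_1$ set of well-founded nodes and a greedy descent is exactly the usual proof. For $\LPP\Rightarrow\Pioo$, your ``guess-plus-conditional-witness'' tree is precisely the mechanism the paper itself exploits in its later arguments that \RLPP{0} implies \ACA{} and \ATR{} (Theorems~\ref{rlpp_atr} and the preceding one): a slot records a guess at a $\Pi^1_1$ fact, the value $0$ is cheaper than $1$, and choosing $0$ incurs an obligation that can only be met when the $\Sigma^1_1$ alternative actually holds. So your approach is fully in the spirit of the paper.

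The one point worth making precise is the one you already flag: when you flip $f_\Lambda(n)$ from $1$ to $0$ and populate the witness slots for $T_n$, you must be sure the first disagreement with $\Lambda$ occurs at the $f(n)$-slot and not at some earlier witness slot. This is handled by choosing a pairing with $(n,0)<(n,k)$ for all $k>0$ and leaving every coordinate $(m,j)$ with $m\neq n$ untouched; then positions below $(n,0)$ are literally unchanged, and the flip at $(n,0)$ from $1$ to $0$ is the first difference. One also needs to say what value the witness slots $(n,k)$ carry when $f(n)=1$; declaring them unconstrained (so the leftmost path sets them to $0$) is fine, since those positions lie above $(n,0)$ and do not affect the comparison. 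With these details fixed the verification is routine in \RCA.
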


We introduce a family of restricted forms of {\MPP} and {\LPP}:
\begin{definition} 
For any $n$, \RMPPO{n} is {\RCA} together with the \emph{$\Sigma_n$-relative minimal path principle}:
\begin{quote} 
Whenever $T$ is an ill-founded tree of finite sequences and $\prec$ is a well-founded partial order, there is a path $\Lambda$ through $T$ such that there is no path $\Lambda'$ through $T$ which is $\Sigma_n$ in $T\oplus\Lambda$ such that $\Lambda'\prec\Lambda$.  
\end{quote}

\RLPPO{n} is {\RCA} together with the restriction of the $\Sigma_n$-relative minimal path principle to the case where $\prec$ is $<$.
\end{definition}

When we take {\ATR} to be our base theory, we may extend this definition to higher levels of the jump hierarchy.  We will see later that even \RLPPO{0} implies {\ATR}.

\begin{definition} 
Let $\alpha$ be a well ordering.  If $Z$ is a set, we say $W$ is $\Sigma_\alpha$ in $Z$ if either:
\begin{itemize}
\item $\alpha=\beta+1$ is a successor, $H(\beta,Y,Z)$, and $W$ is computably enumerable in $Y$, or
\item $\alpha$ is a limit, $H(\alpha,Y,Z)$, and $W$ is computable in $Y$.
\end{itemize}
We say $W$ is $\Pi_\alpha$ in $Z$ if the complement of $W$ is $\Sigma_\alpha$ in $Z$.

If $M$ is a model of {\RCA} and $\alpha$ is an ordering in $M$ such that $M\vDash WO(\alpha)$ then we say \RMPP{\alpha}, \emph{$\Sigma_\alpha$-relative minimal path principle}, holds in $M$ if:
\begin{quote} 
Whenever $T$ is an ill-founded tree of finite sequences and $\prec$ is a well-founded partial order, there is a path $\Lambda$ through $T$ such that no set $\Sigma_\alpha$ in $T\oplus\Lambda$ is a path through $T$ to the left of $\Lambda$.
\end{quote}


{\TMPP} is {\RCA} together with the \emph{transfinite minimal path principle}
\begin{quote} 
Whenever $WO(\alpha)$ holds, \RMPP{\alpha} holds.
\end{quote}

\RLPP{\alpha} and {\TLPP} are the restrictions of \RMPP{\alpha} and {\TMPP} respectively to the case where $\prec$ is $<$.
\end{definition}
Note that {\TMPP} and {\TLPP} are axiomitized by $\Pi^1_2$ formulas.

The minimal path principle is inconvenient to analyze, and for that purpose we will introduce some convenient theories of transfinite induction.

\begin{definition} 
A formula is $\Pi^1_1$, which we will also write $\Pi_0(\Pi^1_1)$, if it has the form
\[\forall X \phi(X)\]
where $\phi$ contains no set quantifiers.  A formula is $\Pi_{n+1}(\Pi^1_1)$ if it has the form $\forall x\phi(x)$ where $\phi$ is built from $\Pi_n(\Pi^1_1)$ formulas using propositional connectives ($\wedge,\vee,\rightarrow,$ and $\neg$).

We write \SPiti{n} for {\ACA} together with the scheme:
\[\forall\beta (WF(\beta)\rightarrow TI(\beta,\phi))\]
whenever $\phi$ is a $\Pi_{n}(\Pi^1_1)$ formula.
\end{definition}
Note that a formula is $A\Pi^1_1$ (``arithmetic in $\Pi^1_1$") exactly if the formula is $\Pi_n(\Pi^1_1)$ for some $n$.  In particular, \SPiti{<\omega} is precisely the theory $\mathbf{A\Pi_1^1\mhyphen TI_0}$, whose proof-theoretic strength is precisely the Howard-Bachmann ordinal.  Other theories with the same proof-theoretic strength include $\mathbf{\Pi^1_\infty\mhyphen TI_0}$, the theory extending {\ACA} by full transfinite induction (see \cite[VII.2]{simpson99}) and {\Pioom}, the theory extending {\ACA} by parameter-free $\Pi^1_1$ comprehension (see \cite{pohlers_handbook}).  Despite having the same proof-theoretic strength, $\mathbf{A\Pi_1^1\mhyphen TI_0}$ does not imply either of these other theories.

Since the theory $\mathbf{A\Pi_1^1\mhyphen TI_0}$ is well understood, we introduce a family of transfinite generalizations.  We will show that these transfinite generalizations are intertwined with the properties \RLPP{\alpha}, providing a tool to calibrate the strength of {\TLPP}.
\begin{definition}
Let $M$ be an $\omega$-model of {\RCA} and let $\alpha$ be a well-ordering.  We say $M$ satisfies \SPiti{\alpha} if whenever $\phi(n,X)$ is an arithmetic formula with parameters from $M$, $Z=\{n\mid M\models\forall X\phi(n,X)\}$, and whenever $\prec$ is a relation in the model $M$ such that $M\models WF(\prec)$, and $W$ is $\Pi_\alpha$ in $Z$, $TI(\prec,W)$ holds.
\end{definition}
We do not require that $\alpha$ have any representation in $M$, and the sets $Z$ and $W$ are therefore determined externally to $M$; similarly, whether $TI(\prec,W)$ holds is determined externally to $M$.  On the other hand, the relation $\prec$ need only be well-founded in the sense of $M$.  Consequently ``$M$ satisfies \SPiti{\alpha}'' is not expressed by a formula of second order arithmetic inside $M$.  However we can still ask this question of a given model (taking $\alpha$ to be an actual well-ordering), and when $N$ is a fixed model of {\ATR} such that $N\models WO(\alpha)$ and $M$ is a countably coded $\omega$-model contained in $N$, the statement ``$M$ satisfies \SPiti{\alpha}'' can be expressed in $N$ by a formula of second order arithmetic.  In the latter case, $N$ itself might fail to be an (actual) $\omega$-model, and $M$ is an $\omega$-sub-model of $N$.  Importantly, in either case $Z$ is absolute in $M$, and since $\alpha$ is either actually well-founded, or we are working in a model $N$ such that $N\models WO(\alpha)$, the collection of $\Pi_\alpha$ in $Z$ sets is uniquely determined by $Z$.

\section{Lower Bounds on the Leftmost Path Principle}
We show that even the weakest principle we are considering, \RLPPO{0}, is fairly strong.  We begin by showing that it implies {\ACA}, which will let us use arithmetic comprehension in later proofs, and illustrate our general method.

\begin{theorem} [\RCA]
\RLPP{0} implies \ACA.\footnote{The simplified construction here was pointed out to us by Stephen Simpson.}
\end{theorem}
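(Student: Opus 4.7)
The plan is to reduce to the standard equivalent of \ACA{} over \RCA{}: the range of any injection $f\colon\mathbb{N}\to\mathbb{N}$ exists as a set. Given such an $f$, I would construct a computable tree $T\subseteq\{0,1\}^{<\omega}$ by putting $\sigma\in T$ exactly when, for every $i<|\sigma|$ with $\sigma(i)=0$, there is no $m<|\sigma|$ with $f(m)=i$. The paths through $T$ are then precisely the characteristic functions of supersets of $\rng(f)$, and the constantly $1$ sequence is obviously a path, so $T$ is ill-founded and \RLPP{0} applies.

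The crux is then to show that any path $\Lambda$ supplied by \RLPP{0} must equal the characteristic function of $\rng(f)$. One containment is immediate: if $f(m)=i$, then no sequence of length exceeding $\max(i,m)$ can have a $0$ in position $i$, so necessarily $\Lambda(i)=1$ for such $i$. For the other direction I would argue by contradiction: suppose $\Lambda(i)=1$ while $i\notin\rng(f)$, and let $\Lambda'$ be obtained from $\Lambda$ by flipping only the $i$-th bit from $1$ to $0$. Because $i\notin\rng(f)$, the defining condition of $T$ is still met at position $i$, so $\Lambda'$ remains a path through $T$. Plainly $\Lambda'$ is computable in $\Lambda$, hence in $T\oplus\Lambda$, and it lies strictly to the left of $\Lambda$ at position $i$, contradicting the choice of $\Lambda$.

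Once $\Lambda$ is identified with the characteristic function of $\rng(f)$, the set $\rng(f)$ exists in \RCA{} by $\Delta^0_1$ comprehension relative to $\Lambda$, which completes the reduction. I do not expect any substantive obstacle: the whole reason the weakest form \RLPP{0} already suffices is that the threatening leftward competitor $\Lambda'$ is obtained from $\Lambda$ by a single bit flip, so the most restrictive possible complexity bound on competitor paths---computability in $T\oplus\Lambda$---is still enough to rule it out.
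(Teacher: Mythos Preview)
Your proposal is correct and essentially identical to the paper's proof: the paper works with a general $\Sigma_0$ formula $\phi(x,y)$ in place of the predicate $f(m)=i$, but builds the same tree of binary strings (a $0$ at position $i$ is allowed only while no witness below the current length has appeared) and uses the same single-bit flip to produce the computable leftward competitor.
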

\begin{proof} 
It suffices to prove $\Sigma_1$ comprehension.  Let $\phi(x,y)$ be a $\Sigma_0$ formula (possibly with parameters).  We say a finite sequence $\sigma$ from $\{0,1\}$ is \emph{valid} if for each $i<|\sigma|$,
\[\sigma(i)=0\Rightarrow\forall j<|\sigma| \neg\phi(i,j).\]
Consider the tree $T$ of valid finite sequences; $T$ is clearly computable from its parameters, and is ill-founded since the function given by $\Lambda_0(i)=1$ for all $i$ is an infinite path through this tree.

Note that if $\Lambda$ is any infinite path through $T$ and $\Lambda(i)=0$ then $\forall y \neg\phi(i,y,S)$: if $\phi(i,m,S)$ then we cannot have any $\sigma\in T$ with $|\sigma|> m$ and $\sigma(i)=0$.

By \RLPP{0}, we may find a path $\Lambda$ so that no infinite path $\Lambda'$ computable from $\Lambda$ is to the left of $\Lambda$.  Suppose $\{i\mid\Lambda(i)>0\}\neq\{i\mid\exists y\phi(i,y,S)\}$.  Since $\Lambda(i)=0$ implies $\forall y\neg\phi(i,y,S)$, it must be that there is some $i$ with $\Lambda(i)>0$ but $\forall y\neg\phi(i,y,S)$.  But then the function
\[\Lambda'(j)=\left\{\begin{array}{ll}
\Lambda(j)&\text{if }j\neq i\\
0&\text{if }j=i\\
\end{array}\right.\]
is also an infinite path through $T$ and easily computable from $\Lambda$.  But $\Lambda'<\Lambda$, contradicting the fact that $\Lambda$ was relatively leftmost.
\end{proof}

\begin{theorem}[\RCA]
\RLPP{0} implies \ATR.
\label{rlpp_atr}
\end{theorem}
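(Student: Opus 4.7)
The plan is to use the preceding theorem (\RLPP{0} implies \ACA) together with the standard equivalence, over \ACA{}, between \ATR{} and the statement ``for every set $Z$ and every well-ordering $\alpha$ there exists $Y$ with $H(\alpha,Y,Z)$'' (that is, a jump hierarchy for $Z$ along $\alpha$, using the universal $\Sigma_1$ formula). I will establish the latter by a single application of \RLPP{0} to a tree designed to generalize the construction in the preceding proof.

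Given $\alpha$ and $Z$, define a tree $T$ of finite $\{0,1\}$-sequences by interpreting each position $n=(e,\beta)$ of $\sigma$ as a guess ``$e\in (Y)_\beta$''; let $Y_\sigma$ denote the set coded by $\sigma$. Put $\sigma\in T$ iff for every $(e,\beta)<|\sigma|$ with $\beta\in\field(\alpha)$ and $\sigma(e,\beta)=0$, no computation of length $\leq|\sigma|$ witnesses $e\in((Y_\sigma)^\beta\oplus Z)'$. The constant-$1$ sequence trivially satisfies this condition at every length, so $T$ is ill-founded. Apply \RLPP{0} to obtain a path $\Lambda$ through $T$ such that no path computable from $T\oplus\Lambda$ is to the left of $\Lambda$, and set $Y=Y_\Lambda$.

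The tree condition directly gives one half of $H(\alpha,Y,Z)$: if $\Lambda(e,\beta)=0$ then $e\notin((Y)^\beta\oplus Z)'$. For the converse, suppose $\Lambda(e,\beta)=1$ while the $\Sigma_1$ condition fails. Let $\Lambda'$ agree with $\Lambda$ except at position $(e,\beta)$, where $\Lambda'(e,\beta)=0$. Then $\Lambda'<\Lambda$ and $\Lambda'$ is computable from $\Lambda$, so leftmostness is contradicted once $\Lambda'\in [T]$ is verified. The only tree conditions possibly affected are at $(e,\beta)$ itself and at positions $(e'',\beta'')$ with $\beta\prec_\alpha\beta''$; in each case the relevant set parameter under $\Lambda'$ is a subset of that under $\Lambda$. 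Since a $\Sigma_1$-in-$W$ assertion is monotone under enlargement of $W$---equivalently, its negation is preserved under shrinking $W$---each failed-witness condition carries over from $\Lambda$ to $\Lambda'$, so $\Lambda'\in [T]$.

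The main obstacle is exactly this preservation step, and it dictates the use of the universal $\Sigma_1$ formula in the tree condition: a genuinely alternating arithmetic $\theta$ could contain $\Pi_1$ subconditions that would be broken by removing a single element from the underlying set, destroying monotonicity and invalidating the leftmost-witness argument. Restricting to the universal $\Sigma_1$ case is harmless because, as noted, this case of jump hierarchies already yields the full \ATR{} scheme over \ACA{}, which we have in hand from the preceding theorem.
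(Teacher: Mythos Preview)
The monotonicity claim in your final paragraph is false, and this breaks the argument. You assert that ``a $\Sigma_1$-in-$W$ assertion is monotone under enlargement of $W$'', but the universal $\Sigma_1$ formula used in $H(\alpha,Y,Z)$ is essentially the Turing jump, and the jump is \emph{not} monotone in its oracle: a computation $\{e\}^{W}(e)$ may query whether some $n\in W$, diverge on a ``yes'' answer, and halt on a ``no'' answer. So when you flip $\Lambda(e,\beta)$ from $1$ to $0$, the oracle $(Y_{\Lambda'})^{\beta''}$ for $\beta\prec_\alpha\beta''$ loses the element $(e,\beta)$, and a previously non-halting witness computation for some $(e'',\beta'')$ with $\Lambda(e'',\beta'')=0$ may now halt, violating the tree condition for $\Lambda'$ at that position. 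There is no way to repair this by also flipping $(e'',\beta'')$ to $1$, since such positions may well satisfy $(e'',\beta'')<(e,\beta)$ under the pairing function and hence lie \emph{below} the position you changed; any such flip destroys $\Lambda'<\Lambda$.

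This is precisely the obstacle the paper's proof is built to overcome. The paper allows values $>1$ at each position (encoding explicit witnesses to the existential), adds a third validity condition tying the $\prec_\alpha$-order of levels to the natural-number order of positions, and then argues by arithmetic transfinite induction along $\alpha$. At stage $\gamma$, the inductive hypothesis guarantees that all lower levels are already correct, so one can \emph{computably} supply witnesses at every lower level, reset all higher levels to the neutral value $1$, and only then flip the offending bit at level $\gamma$; the third condition ensures this modified path is still to the left. Your single global flip, without the level-by-level induction and without the witness bookkeeping, cannot control these cross-level dependencies.
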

\begin{proof} 
It suffices to show transfinite recursion over $\Sigma_1$ formulas.  Suppose $WO(\alpha)$ and let $\theta(x)=\exists y\phi(x,y,Y)$; we will show that $\exists X H_\theta(\alpha,X)$.  Note that since $\phi$ is $\Sigma_0$, for any $i,Y$ such that $\exists y\phi(i,y,Y)$ holds, there is an $m$ such that for any $Y'$ such that $\chi_{Y'}\upharpoonright m=\chi_Y\upharpoonright m$, $\exists y<m\phi(i,y,Y')$.

We will again consider a tree of potential characteristic functions for $Y$.  A finite sequence of natural numbers is \emph{valid} if:
\begin{itemize} 
  \item For any $\gamma\in\field(\alpha)$ and any $i$ such that $\sigma((i,\gamma))=0$, for every $Y$ such that $\chi_Y\upharpoonright\dom(\sigma)=\sigma$ we have $\forall y<|\sigma|\neg\phi(i,y,(Y)^\gamma)$,
  \item If $\sigma((i,\gamma))>1$ then for every $Y$ such that $\chi_Y\upharpoonright\dom(\sigma)=\sigma$ we have $\phi(i,\sigma((i,\gamma))-2,(Y)^\gamma)$,
  \item If there are $j,\delta$ such that $(j,\delta)<(i,\gamma)$, $\delta>_\alpha\gamma$, and $\sigma((j,\delta))\neq 1$ then $\sigma((i,\gamma))\neq 1$.
\end{itemize}
Note that, since $\phi$ is a computable formula, these conditions are arithmetic (indeed, computable), despite the apparent set quantifier.

The idea is that when $\sigma((i,\gamma))=0$, the universal formula should be true, and when $\sigma((i,\gamma))>0$, the existential should be true.  When $\sigma((i,\gamma))=1$, the existential quantifier is ``unjustified'': no witness is required.  When $\sigma((i,\gamma))>1$, however, a witness is required, and $\sigma((i,\gamma))-2$ should be such a witness.

The final condition in the construction of the tree is perhaps the least obvious; the point is that when we set $\sigma((i,\gamma))=0$, we might be depending on the fact that $\sigma((j,\delta))=1$ for some $\delta<_\alpha\gamma$ but $j$ much larger than $i$ so that $(i,\gamma)<(j,\delta)$.  If we wanted to fix a potential characteristic function by setting $\sigma((j,\delta))=0$, we would have to restore $\sigma((i,\gamma))=1$, and since $(i,\gamma)$ appears below $(j,\delta)$, we are no longer moving to the left.  Our solution is to require that once we set $\sigma((i,\gamma))\neq 1$ in a path, we are supposed to be certain about $(j,\delta)$ whenever $\delta<_\alpha\gamma$.  This is enforced by requiring that we actually provide witnesses to existential formulas of all lower ranks.

There are no requirements when $\sigma(x)=1$, so the function $\Lambda_0(x)=1$ for all $x$ is an infinite path through this tree.  By \RLPP{0}, we may find a relatively leftmost path $\Lambda$.  Let $Y=\{i\mid\Lambda(i)>0\}$.  Since {\ACA} satisfies arithmetic transfinite induction, we show by induction on $\gamma\in field(\alpha)$ that $(Y)_\gamma=\{i\mid\exists y\phi(i,y,(Y)^\gamma\}$.  Assume that $H_\theta(\gamma,(Y)^\gamma)$ holds.

Suppose $\exists y\phi(i,y,(Y)^\gamma)$.  Then there is some $m$ such that whenever $\chi_{Y'}\upharpoonright m=\chi_Y\upharpoonright m$, $\exists y<m\phi(i,y,(Y')^\gamma)$.  Then we cannot have $\Lambda((i,\gamma))=0$, so $\Lambda((i,\gamma))>0$ and therefore $i\in(Y)_\gamma$.

Suppose $\forall y\neg\phi(i,y,(Y)^\gamma)$.  If $\Lambda((i,\gamma))>1$, there would be some $m$ such that whenever $\chi_{Y'}\upharpoonright m=\chi_Y\upharpoonright m$, $\phi(i,\Lambda((i,\gamma))-2,(Y')^\gamma)$, contradicting $\forall y\neg\phi(i,y,(Y)^\gamma)$.  If $\Lambda((i,\gamma))=0$, $i\not\in (Y)_\gamma$ as desired.

So suppose $\Lambda((i,\gamma))=1$.  Observe that for $\delta<_\alpha\gamma$, we have $(Y)_\delta=\{i\mid\exists y\phi(i,y,(Y)^\delta\}$.  In particular, if $\Lambda((j,\delta))=1$ and $\delta<_\alpha\gamma$ then there must be some $y$ such that $\phi(i,y,(Y)^\delta)$, and we may therefore computably (in $(Y)^\delta$) find such a $y$; we name this value $y(j,\delta)$.  We define
\[\Lambda'((j,\delta))=\left\{\begin{array}{ll}
y(j,\delta)+2&\text{if }\delta<_\alpha\gamma\text{, }\Lambda((j,\delta))=1\text{, and }(i,\gamma)<(j,\delta)\\
\Lambda((j,\delta))&\text{if }\delta<_\alpha\gamma\text{ and either }\Lambda((j,\delta))\neq 1\text{ or }(j,\delta)<(i,\gamma)\\
1&\text{if }\delta>_\alpha\gamma\\
\Lambda((j,\delta))&\text{if }\delta=\gamma\text{ and }j\neq i\\
0&\text{if }j=i\text{ and }\delta=\gamma\\
\end{array}\right..\]
Note that $\Lambda'<\Lambda$: if $(j,\delta)<(i,\gamma)$ and $\delta\leq_\alpha\gamma$ then $\Lambda'((j,\delta))=\Lambda((j,\delta))$ by definition, while if $\delta>_\alpha\gamma$ then, since $\Lambda$ satisfied the third condition in the definition of the tree, we must have had $\Lambda((j,\delta))=1=\Lambda'((j,\delta))$.

We check that $\Lambda'$ is an infinite path through $T$; let $Y'=\{i\mid\Lambda'(i)>0\}$.  Let $\sigma\sqsubset\Lambda'$ be a finite initial segment.  Suppose $\sigma((j,\delta))=0$; then $\delta\leq_\alpha\gamma$ and either $\Lambda((j,\delta))=0$ or $(i,\gamma)=(j,\delta)$, and since $(Y')^\gamma=(Y)^\gamma$ and $\forall y\neg\phi(j,y,(Y)^\gamma)$, also $\forall y\neg\phi(j,y,(Y')^\gamma)$.

If $\sigma((j,\delta))>1$ then again $\delta\leq_\alpha\gamma$ and $\phi(j,\Lambda'((j,\delta))-2,(Y)^\delta)$, so $\phi(j,\Lambda'((j,\delta))-2,(Y')^\delta)$.

Finally, if there is any $(j',\delta')<(j,\delta)$ with $\delta<_\alpha\delta'$ and $\sigma((j',\delta'))\neq 1$, we have $\delta<_\alpha\delta'\leq_\alpha\gamma$, and therefore $\sigma((j,\delta))\neq 1$.

So $\Lambda'$ is an infinite path computable from $\Lambda$ and to the left of $\Lambda$, which contradicts the choice of $\Lambda$.
\end{proof}

Finally, we give our main lower bound on \RLPPO{\alpha}.  Theorem \ref{spiti_to_rlpp} shows that this bound is almost sharp, leaving a small gap between the amount of transfinite induction we need to obtain \RLPP{\alpha} and the amount we show to be implied by \RLPP{\alpha}. 
\begin{theorem} \label{rlpp_omega_spiti}
Let $N$ be a model of \RCA{} containing an ordering $\alpha$ such that $N\vDash WO(\alpha)$ and $N\vDash\RLPP{\alpha+2}$.  Then
\[N\vDash\text{``there exists a countably coded $\omega$-model of {\ACA} satisfying \SPiti{\alpha}''}.\]
\end{theorem}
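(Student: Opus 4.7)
The plan is to adapt the tree-of-approximations construction from the proof of Theorem \ref{rlpp_atr}. Since \RLPP{0} already implies \ATR, we have $N\vDash\ATR$, so the statement that a given countably coded $\omega$-model satisfies \SPiti{\alpha} is expressible in $N$ as a second-order formula. We construct, inside $N$, an ill-founded tree $T$ whose relatively leftmost paths code an $\omega$-model $M$ of \ACA{} together with certification data that forces $M\vDash\SPiti{\alpha}$, and then apply \RLPP{\alpha+2}.

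A valid finite sequence $\sigma$ assigns values at three kinds of coordinates. First, at each $(i,j)$, $\sigma((i,j))$ is the intended $i$-th bit of a set $X_j$, with structural constraints ensuring the coded collection $(X_j)_j$ is closed under join and Turing jump. Second, at each $(n,\ulcorner\forall Y\phi\urcorner)$ with $\phi$ arithmetic in $M$-parameters, $\sigma$ uses the three-range convention of Theorem \ref{rlpp_atr}: $0$ asserts $\forall Y\phi(n,Y)$ (so $\sigma$ must contain no refutation), $1$ leaves the commitment unresolved, and $m+2$ claims $\neg\phi(n,m)$ (checkable from $\sigma$). Third, at each triple $(\psi,\prec,x)$ where $\psi$ is a $\Pi_\alpha(\Pi^1_1)$ formula with parameters in $M$, $\prec\in M$ with $M\vDash WF(\prec)$, and $x\in\field(\prec)$, $\sigma$ uses the same three-range convention to claim whether $x$ is a $\prec$-minimal counterexample to $TI(\prec,\psi)$. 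Coordinates are stratified by an $\alpha$-indexed rank, and the third clause of Theorem \ref{rlpp_atr}'s tree definition is adapted so that once a higher-rank coordinate is committed away from $1$, all earlier, lower-rank coordinates must be fully witnessed. The all-$1$s sequence $\Lambda_0$ is valid, so $T$ is ill-founded.

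Apply \RLPP{\alpha+2} in $N$ to obtain $\Lambda$ admitting no $\Sigma_{\alpha+2}$-in-$T\oplus\Lambda$ path strictly to its left. Let $M=(X_j)_j$ and let $Z$ be the externally-defined $\Pi^1_1$ truth set over $M$. A bootstrap argument paralleling the proof of Theorem \ref{rlpp_atr}, using only $\Sigma_1$-in-$T\oplus\Lambda$ leftward flips at the lowest-rank coordinates, forces $\Lambda$ to encode $Z$ correctly; hence $Z$ is computable from $\Lambda$ and $M$ is a countably coded $\omega$-model of \ACA{} in $N$. Suppose now for contradiction that $M\not\vDash\SPiti{\alpha}$, witnessed by $\psi$, $\prec$, and a $\prec$-minimal counterexample $x_0$ (which exists since $N\vDash\ATR$ lets us rank $\prec$). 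Define $\Lambda'$ by flipping the $(\psi,\prec,x_0)$-coordinate of $\Lambda$ to an appropriate witness and performing the downward cleanup from Theorem \ref{rlpp_atr}, reading witnesses for $\Sigma^1_1$ truths directly off $\Lambda$. The cleanup affects only lower-rank, earlier-positioned coordinates, so $\Lambda'<\Lambda$. Computing $\Lambda'$ from $T\oplus\Lambda$ requires deciding $\psi$ at $\prec$-predecessors (which is $\Pi_\alpha$ in $Z$, hence $\Pi_\alpha$ in $\Lambda$), conjoining with $\neg\psi$ at the candidate ($\Delta_{\alpha+1}$), and a final existential to locate the minimal candidate, for a total complexity of $\Sigma_{\alpha+2}$ in $T\oplus\Lambda$. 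This contradicts the leftmost property of $\Lambda$.

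The main obstacle is the stratification together with the complexity accounting for $\Lambda'$. The coordinate layout and the third-clause conditions must be chosen so that (i) the flip at $(\psi,\prec,x_0)$ genuinely produces a leftward move without requiring any coordinate at an earlier position to be increased, and (ii) the resulting $\Lambda'$ lies uniformly within the $\Sigma_{\alpha+2}$-in-$T\oplus\Lambda$ budget. The count $\alpha+2$ decomposes as one level to recover $Z$ from $\Lambda$, $\alpha$ levels to evaluate a $\Pi_\alpha$-in-$Z$ formula, and one final existential for the minimization; any tighter budget would break the final step.
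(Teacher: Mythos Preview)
Your overall shape is right --- build a tree whose relatively leftmost path codes an $\omega$-model $M$ of \ACA, then argue that a failure of \SPiti{\alpha} in $M$ yields a $\Sigma_{\alpha+2}$-in-$\Lambda$ path to the left --- but the transfinite-induction step contains a genuine gap. You write that if $M\not\vDash\SPiti{\alpha}$ via $\psi$ and $\prec$, then there is a $\prec$-minimal counterexample $x_0$, ``which exists since $N\vDash\ATR$ lets us rank $\prec$.'' But $\prec$ is only well-founded \emph{in the sense of $M$}; nothing gives $N\vDash WF(\prec)$, so $N$ cannot rank $\prec$. Worse, the failure of $TI(\prec,W)$ means precisely that $W$ is progressive while $\neg W$ is nonempty, i.e., $\{x:\neg W(x)\}$ has \emph{no} $\prec$-minimal element. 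So the coordinate you propose to flip simply does not exist, and the contradiction never fires. (There is also a smaller circularity in indexing coordinates by ``$\prec\in M$ with $M\vDash WF(\prec)$'' before $M$ is determined, but that is reparable by indexing over all $j$ and reading $\prec$ as $M_j$; the minimal-counterexample issue is not.)

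The paper's proof sidesteps this by encoding a different object at the third kind of coordinate: for each index $m$ it reserves slots $M_{(1,m)},M_{(2,m)}$ intended to hold an infinite descending sequence through the partial order coded by $M_m$. If $TI(\prec,W)$ fails with $W$ $\Pi_\alpha$ in the $\Pi^1_1$ truth set $Z$ of $M$, then from any $x$ with $\neg W(x)$ a greedy search through $\neg W$ produces an infinite $\prec$-descending sequence $\Upsilon$; this $\Upsilon$ is $\Pi_{\alpha+1}$, hence $\Sigma_{\alpha+2}$, in $M$. The leftward flip plants $\Upsilon$ into the $M_{(2,m)}$ slot, and the contradiction is now with $M\vDash WF(\prec)$, since after the flip $M$ itself contains a descending sequence through $\prec$. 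The point is that a descending sequence can always be extracted from a progressive-but-not-universal $W$, whereas a minimal counterexample cannot; replacing your third coordinate by descending-sequence slots fixes the argument.
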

The proof gives a bit more, namely that the same claim would hold if $N\vDash\Pi_{\alpha+1}\mhyphen \mathrm{LPP}$, and even $\Pi_{\alpha}\mhyphen \mathrm{LPP}$ if either $\alpha\geq\omega$ or $\alpha$ is odd.
\begin{proof}
Working inside $N$, we will construct a model $M$.  We will view a sequence $\Lambda$ as coding a model $M$ by setting $(i,n)\in M$ iff $\Lambda((i,n))>0$.  Since $M$ will be viewed as a countable coded $\omega$-model, this is saying that $M_i=\{n\mid \Lambda((i,n))>0\}$.

In order to ensure closure under arithmetic comprehension, it will be convenient to have a name for the set $M_i$.  We consider an extension of the language of second order logic by countably many new set constants, $S_1,\ldots$.  (For technical reasons, it will be convenient to assume that this language has existential quantifiers and negation, but no universal quantifier.)  We view $M$ as a model of this extended language by defining $M\vDash n\in S_i$ iff $n\in M_i$.

We will define our tree so that when $\phi$ is an arithmetic formula in this language with a single free variable, the set $M_{(0,\lceil\phi\rceil)}=\{n\mid M\vDash\phi(n)\}$.  This will ensure that we have a model of {\ACA}.  (When we define conditions below, we fix a variable and only discuss $M_{0,\lceil\phi\rceil}$ where no other variables occur free in $\phi$; there are no conditions on other cases.)

The new complication will be ensuring that the model satisfies {\SPiti{\alpha}}.  Suppose $M$ were not a model of {\SPiti{\alpha}}; then there would be some arithmetic $\phi(X,x)$, some $n$, and a sequence $\Upsilon$, $\Pi_\alpha$ in $\{j\mid \forall i\phi(M_i,j)\}$, such that $\Upsilon$ is an infinite descending sequence in $M_n$ (where we view $M_n$ as coding a partial order).  (The key point, of course, will be that $\Upsilon$ is $\Pi_{\alpha+1}$, and so $\Sigma_{\alpha+2}$, in $M$.)  We will ensure that if $M_{(2,n)}$ is non-trivial then it is some descending sequence in $M_n$.  (We will also use $M_{(1,n)}$ to make the coding easier.)

We will handle the dependencies of one set on another in a similar manner to the previous theorem.  For this purpose, we define 
\begin{itemize} 
  \item $lvl((0,\lceil t\in S_i\rceil))=lvl(i)+1$,
  \item $lvl((0,\lceil\phi\rceil))=0$ if $\phi$ is atomic and not of the form $t\in S_i$,
  \item $lvl((0,\lceil\neg\phi\rceil))=lvl((0,\lceil\phi\rceil))+1$,
  \item $lvl((0,\lceil\phi\wedge\psi\rceil))=lvl((0,\lceil\phi\vee\psi\rceil))=\max\{lvl((0,\lceil\phi\rceil)),lvl((0,\lceil\psi\rceil))\}+1$,
  \item $lvl((0,\lceil\exists x\phi\rceil))=lvl((0,\lceil\forall x\phi\rceil))=lvl((0,\lceil\phi[0/x]\rceil))+1$,
  \item $lvl((1,n))=lvl(n)+1$,
  \item $lvl((2,n))=lvl(n)+1$,
  \item $lvl((i,j))=0$ in all other cases.
\end{itemize}


We say a sequence $\sigma$ is \emph{valid} if whenever $\sigma(((i,j),k))$ is defined:
\begin{itemize} 
  \item If $i=0$ and $j=\lceil t\in S_n\rceil$ then $\sigma(((i,j),k))=\sigma((n,k))$,
  \item If $i=0$ and $j=\lceil \phi\rceil$ where $\phi$ is atomic and not of the form $t\in S_i$ then $\sigma(((i,j),k))=1$ if $\phi$ is true and $0$ if $\phi$ is false,
  \item If $i=0$ and $j=\lceil\neg\phi\rceil$ then $\sigma(((i,j),k))=1$ if $\sigma(((i,\lceil\phi\rceil),k))=0$ and $0$ otherwise,
  \item If $i=0$ and $j=\lceil\phi\wedge\psi\rceil$ then $\sigma(((i,j),k))=1$ if both $\sigma(((0,\lceil\phi\rceil),k))>0$ and $\sigma(((0,\lceil\psi\rceil),k))>0$, and $0$ otherwise,
   \item If $i=0$, $j=\lceil\exists x\phi\rceil$, and $\sigma(((i,j),k))=0$ then there is no $u<|\sigma|$ such that $\sigma(((i,\lceil\phi(u)\rceil),k))>0$,
   \item If $i=0$, $j=\lceil\exists x\phi\rceil$, $\sigma(((i,j),k))>1$, and $\sigma(((i,\lceil\phi(\sigma(((i,j),k))-2)\rceil),k))$ is defined then $\sigma(((i,\lceil\phi(\sigma(((i,j),k))-2)\rceil),k))>0$,
	\item If $i=1$, $k>0$, and $\sigma(((i,j),0))=0$ then 
		\begin{enumerate}
		  \item $\sigma(((i,j),k))$ is a sequence $\langle q_0,\ldots,q_k\rangle$,
		  \item Whenever $i<k$ and $\sigma((j,(q_{i+1},q_{i})))$ is defined, $\sigma((j,(q_{i+1},q_{i})))>0$,
		  \item If $k>1$ then $\sigma(((i,j),k-1))\sqsubset\sigma(((i,j),k))$.
		\end{enumerate}
	\item If $i=2$ and $\sigma(((1,j),0))=0$ then $\sigma(((2,j),(k,q))=1$ if $\sigma(((1,j),k))_k=q$ and $0$ otherwise.
\end{itemize}

It is easy to construct an infinite path through this tree (the sequence constantly $1$ will no longer work, because of the conditions for atomic formulas, $\wedge$, and $\neg$, but these cases are easily dealt with).

Let $\Lambda$ be the path given by \RLPP{\alpha+2}.  We show that for all $n$,
\begin{enumerate} 
  \item If $n=(0,\lceil\phi\rceil)$, $M_n=\{i\mid M\vDash\phi(i)\}$,
  \item If $n=(2,m)$ and there is any infinite decreasing sequence in $M_m$ which is $\Sigma_{\alpha+1}$ in $M$ then $M_n$ is such a sequence.
\end{enumerate}
Naturally, we proceed by induction on $lvl(n)$.  The first claim is identical to the argument in the previous theorem.  The second claim is obtained by a similar argument: suppose there is an infinite decreasing sequence $\Upsilon$ in $M_m$ which is $\Sigma_{\alpha+1}$ in $M$.  By construction, if $M_n$ is not such a sequence, we have $\sigma(((1,m),0))\neq 0$, so we obtain a new sequence $\Lambda'$ by setting $\Lambda'(((1,m),0))=0$, $\Lambda'(((1,m),k+1))=\Upsilon\upharpoonright k+2$, and $\Lambda'(((2,m),k))=\Upsilon(k)$, and resetting everything of higher level.  Note that any component which depends on the values at $M_{(1,m)}$ or $M_{(2,m)}$ (for instance, sets defined by formulas containing the constant $S_{(2,m)}$) has a higher index then $m$, and therefore all its indices are greater than $((1,m),0)$.  Since $\Lambda'(((1,m),0))<\Lambda(((1,m),0))$ and $\Lambda'$ is $\Sigma_{\alpha+1}$ in $M$, we obtain a contradiction, so $M_n$ was already an infinite descending sequence in $M_m$, concluding the induction.

This immediately gives that $M$ is a model of {\ACA}.  To see that $M$ satisfies {\SPiti{\alpha}}, observe that if $Y=\{n\mid M\vDash\forall X\phi(X,n)\}$ then $Y$ is $\Pi^0_1$ in $M$, and therefore any set $\Pi_\alpha$ in $Y$ is $\Pi_{\alpha+1}$ in $M$.  In particular, any set defined by a $\Pi_\alpha(\Pi^1_1)$ formula is $\Sigma_{\alpha+2}$ in $M$.  It follows that $M$ satisfies \SPiti{\alpha}.
\end{proof}

Before continuing, we note that there is no difference in strength between the leftmost and minimal path principle.
\begin{theorem} [\RCA]
\label{tmpp_is_tlpp}
\begin{enumerate} 
  \item {\LPP} is equivalent to {\MPP}.
  \item For any $\alpha$, \RLPP{\alpha} is equivalent to \RMPP{\alpha}.
  \item {\TLPP} is equivalent to {\TMPP}.
\end{enumerate}
\end{theorem}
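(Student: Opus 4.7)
The direction \MPP{} $\Rightarrow$ \LPP{} (and likewise \RMPP{\alpha} $\Rightarrow$ \RLPP{\alpha} for each $\alpha$, and \TMPP{} $\Rightarrow$ \TLPP) is immediate, since $<$ on $\mathbb{N}$ is itself a well-founded partial order and the definability conditions (computable in, $\Sigma_n$ in, $\Sigma_\alpha$ in $T\oplus\Lambda$) are preserved when we specialize $\prec$ to $<$. The substance is therefore in the reverse direction, and my plan is to handle all three cases uniformly: given an ill-founded tree $T$ and a well-founded partial order $\prec$, construct a new tree $T^*$, uniformly computable in $T\oplus\prec$, whose $<$-leftmost path decodes to a $\prec$-minimal path of $T$.

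The construction of $T^*$ proceeds by relabeling the children at each node of $T$ so that the $<$-order on labels refines the $\prec$-order on the corresponding $T$-branches. At $\sigma \in T$, I assign to each child $\sigma^\frown\langle n\rangle \in T$ a fresh label $\ell_\sigma(n) \in \mathbb{N}$ subject to: whenever $m \prec n$ and both branches are in $T$, $\ell_\sigma(m) < \ell_\sigma(n)$; incomparable pairs may be ordered arbitrarily. The difficulty is that $\prec$ is only partial, children of $\sigma$ are enumerated over stages, and both new children and new $\prec$-relations can appear late, so labels must be chosen with enough room beneath them to accommodate any later $\prec$-smaller child. The standard device is to let $T^*$ itself carry the bookkeeping: its nodes record a candidate initial segment of a $T$-path together with a history of the relabeling process, with stub branches reserving space for still-undiscovered $\prec$-smaller children. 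The $<$-leftmost path of $T^*$ then navigates into the $\prec$-smallest live branch at each level.

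The principal obstacle is to verify that this stagewise, local relabeling lifts to the path level: that $\Lambda' \prec \Lambda$ as paths of $T$ corresponds (via the decoding $E$) to $E(\Lambda') < E(\Lambda)$ as paths of $T^*$, and conversely. This reduces to the observation that $\Lambda'$ and $\Lambda$ share a common prefix up through the first position $i$ at which they differ, with $\Lambda'(i) \prec \Lambda(i)$; their encodings agree on that prefix, and the local refinement at $i$ forces $\ell_{\Lambda\upharpoonright i}(\Lambda'(i)) < \ell_{\Lambda\upharpoonright i}(\Lambda(i))$, hence $E(\Lambda')(i) < E(\Lambda)(i)$. Because the entire construction of $T^*$ is uniformly computable in $T\oplus\prec$, the relativized versions follow immediately: any $\Sigma_\alpha$-in-$T\oplus\Lambda$ counterexample to $\Lambda$ being $\prec$-minimal transports to a $\Sigma_\alpha$-in-$T^*\oplus E(\Lambda)$ counterexample to $E(\Lambda)$ being $<$-leftmost, and vice versa. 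This yields \RLPP{\alpha} $\Leftrightarrow$ \RMPP{\alpha} for every $\alpha$, and hence \TLPP{} $\Leftrightarrow$ \TMPP{}.
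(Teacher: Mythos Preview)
Your high-level strategy---encode $\prec$ into the standard ordering on a modified tree $T^*$ so that a $<$-leftmost path of $T^*$ decodes to a $\prec$-minimal path of $T$---is exactly the paper's strategy, and your treatment of the relativized versions (computable encoding/decoding preserves $\Sigma_\alpha$-definability) is also right. But the construction itself has a genuine gap.

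You write that at each $\sigma\in T$ you will assign labels $\ell_\sigma(n)\in\mathbb{N}$ with $m\prec n\Rightarrow\ell_\sigma(m)<\ell_\sigma(n)$. Such a labeling need not exist: a well-founded partial order on $\mathbb{N}$ can have order type strictly above $\omega$, and then there is no order-preserving map into $(\mathbb{N},<)$. (Concretely: take $\prec$ with $1\prec 2\prec 3\prec\cdots$ and $n\prec 0$ for all $n\geq 1$; you would need infinitely many labels below $\ell_\sigma(0)$.) You acknowledge a difficulty and gesture at ``stub branches reserving space,'' but this is exactly the nontrivial part, and it is not carried out. The paper resolves this by mapping each $x\in\field(\prec)$ not to a natural number but to a finite \emph{sequence} $\pi'(x)\in\mathbb{N}^{<\omega}$, so that $x\prec y$ implies $\pi'(x)<\pi'(y)$ lexicographically; the tree $T'$ is then the closure under initial segments of $\{\pi(\tau):\tau\in T\}$, where $\pi$ acts coordinatewise. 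Two further lemmas are essential and absent from your sketch: one must prove that the set of proper prefixes of $\pi'$-values is itself well-founded (otherwise $T'$ could have paths not arising from any $T$-path), and that every path through $T'$ decodes computably and uniquely to a path through $T$. Your ``$E(\Lambda')<E(\Lambda)\Leftrightarrow\Lambda'\prec\Lambda$'' claim presupposes these facts without establishing them.
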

\begin{proof} 
The right to left directions are all trivial.  We prove the left to right direction.

Let $\prec$ be a well-founded partial order.  We define a computable map $\pi$ from ${\field(\prec)}\subseteq\mathbb{N}$ to $\mathbb{N}^{<\omega}$ such that if $x\prec y$ then $\pi(x)<\pi(y)$ (in the lexicographic ordering).  We first define an auxiliary map $\pi'$ inductively.  $\pi'$ will have the property that its image consists only of sequences of even numbers followed by a single odd number.  We define $\pi'$ by the following algorithm: let $y$ be given and suppose $\pi'(x)$ has been defined for all $x<y$.  If there is any $x<y$ such that $y\prec x$, choose $x$ $\prec$-least such that this holds, so $\pi'(x)=\sigma^\frown\langle n\rangle$, and set $\pi'(y)=\sigma^\frown\langle n-1,m\rangle$ where $m$ is the smallest odd number so $\pi'(y)\neq \pi'(z)$ for $z<y$.  If there is no such $x$, set $\pi'(y)=\langle m\rangle$ where $m$ is again the smallest odd number so $\pi'(y)\neq \pi'(z)$ for $z<y$.

\begin{claim} 
If $x\prec y$ then $\pi'(x)<\pi'(y)$.
\end{claim}
\begin{claimproof} 
We proceed by induction on the maximum of $x$ and $y$ with respect to $<$.  

Suppose $y<x$.  We proceed by side induction on $y$ along $\prec$, so assume that whenever $z<x$ and $x\prec z$, $\pi'(x)<\pi'(z)$.  First, assume there is no such $z$, so $y$ is $\prec$-least such that $x\prec y$ and $y<x$.  Then for some $\sigma,n$, $\pi'(y)=\sigma^\frown\langle n\rangle$ while $\pi'(x)=\sigma^\frown\langle n-1,m\rangle$ for some $m$, so certainly $\pi'(x)<\pi'(y)$.  Otherwise, there is some $z<x$ such that $x\prec z\prec y$; then we have $\pi'(z)<\pi'(y)$ by main IH and $\pi'(x)<\pi'(z)$ by side IH, so $\pi'(x)<\pi'(y)$.

Suppose $x<y$.  Suppose there is some $z<y$ such that $y\prec z$, and let $z$ be $\prec$-least such that this is the case.  Then by IH, $\pi'(x)<\pi'(z)=\sigma^\frown\langle n\rangle$ while $\pi'(y)=\sigma^\frown\langle n-1,m\rangle$.  If $\sigma^\frown\langle n-1,m'\rangle\sqsubset\pi'(x)$ for some $m'$ then we have $m'<m$, so $\pi'(x)<\pi'(y)$.  Otherwise, since $\pi'(x)<\sigma^\frown\langle n\rangle$, we must have $\pi'(x)<\sigma^\frown\langle n-1\rangle$, and therefore $\pi'(x)<\pi'(y)$.  If there is no such $z$, $\pi'(y)=\langle m\rangle$ while $\pi'(x)=\langle n\rangle^\frown\tau$ where $n<m$, so again $\pi'(x)<\pi'(y)$.
\end{claimproof}

\begin{claim}
  Suppose $\pi'(x)=\sigma^\frown\langle n\rangle$ and $\pi'(y)\sqsupset\sigma^\frown\langle n-1\rangle$.  Then $y\prec x$.
\end{claim}
\begin{claimproof}
  First, note that by construction $x<y$.  We proceed by induction on $y-x$.  $\pi'(y)$ must have the form $\tau^\frown\langle m-1,m'\rangle$; if $m=n$ then we have $y\prec x$.  Otherwise, there must be some $z$ with $x<z<y$ such that $\pi'(z)=\tau^\frown\langle m\rangle\sqsupset\sigma^\frown\langle n-1\rangle$.  By IH we have $z\prec x$ (since $z-x<y-x$ and $\pi'(z)\sqsupset\sigma^\frown\langle n-1\rangle$) and $y\prec z$ (since $y-z<y-x$ and $\pi'(y)\sqsupset\tau^\frown\langle m-1\rangle$), and since $\prec$ is a partial order, $y\prec x$.
\end{claimproof}

\begin{claim} 
$\{\sigma\mid\exists x\ \sigma\sqsubseteq\pi'(x)\}$ is well-founded.
\end{claim}
\begin{claimproof} 
Suppose not, and let $\sigma_0\sqsubset\sigma_1\sqsubset\cdots$ be an infinite descending sequence.  Since odd numbers are always terminal, each $\sigma_i$ consists only of even numbers.  Each $\sigma_i=\tau_i^\frown\langle n_i-1\rangle$ for some $\tau_i,n_i$, and by the construction of $\pi'$, there must be some $x_i$ such that $\pi'(x_i)=\tau_i^\frown\langle n_i\rangle$.  Observe that $\sigma_i\sqsubset\pi'(x_{i+1})$, and therefore $x_{i+1}\prec x_i$.  Therefore the $x_i$ form an infinite descending sequence through $\prec$, contradicting the fact that $\prec$ is well-founded.
\end{claimproof}

Now we define $\pi(x)=\pi'(x)^\frown\langle x\rangle$.  (The purpose of this suffix is to ensure that the inverse map is computable.)  Given a sequence $\sigma$, define $\pi(\sigma)$ inductively by $\pi(\langle\rangle)=\langle\rangle$ and $\pi(\sigma^\frown\langle n\rangle)=\pi(\sigma)^\frown\pi(n)$.  $\pi$ is clearly injective.

Now let $T$ be an ill-founded tree of finite sequences and define $T'=\{\sigma\mid\exists\tau\in T\ \sigma\sqsubseteq\pi(\tau)\}$.  Since \RLPP{0} implies {\ACA}, $T'$ exists.  If $\Lambda$ is an infinite path through $T$, $\pi(\Lambda)$ is an infinite path through $T'$, so $T'$ is ill-founded.

\begin{claim} 
If $\Lambda'$ is a path through $T'$, there is a unique path $\Lambda$ through $T$ such that $\pi(\Lambda)=\Lambda'$, and $\Lambda'$ is computable from $\Lambda$.
\end{claim}
\begin{claimproof}
Note that, since $\{\sigma\mid\exists x\ \sigma\sqsubseteq\pi(x)\}$ is well-founded, all subsequences of $\Lambda'$ consisting only of even numbers must be finite.  Then we may uniquely decompose $\Lambda'$ into a sequence of blocks
\[\Lambda'=\sigma_0^\frown\langle n_0,m_0\rangle^\frown\sigma_1^\frown\langle n_1,m_1\rangle\cdots\]
where $\sigma_i$ consists only of even numbers and $n_i$ is odd.  Then for each $i$, we must have $\pi(m_i)=\sigma_i^\frown\langle n_i\rangle$, so setting $\Lambda(i)=m_i$, we have $\pi(\Lambda)=\Lambda'$.
\end{claimproof}

Let $\Lambda'$ be a path through $T'$ given by {\LPP} and let $\Lambda$ be the unique path through $T$ such that $\pi(\Lambda)=\Lambda'$.  If $\Lambda^*\prec\Lambda$ then $\pi(\Lambda^*)<\Lambda'$, contradicting the choice of $\Lambda$.  The second and third parts of the claim follow since if $\Lambda^*$ is $\Sigma_\alpha$ in $\Lambda$, $\pi(\Lambda^*)$ is $\Sigma_\alpha$ in $\Lambda'$.
\end{proof}

\subsection{Models of \texorpdfstring{\SDC}{Sigma11-DC}}
In this section we show that, in addition to proving the existence of models of \SPiti{\alpha}, {\TLPP} proves the existence of certain models satisfying $\SDC$.  This will be needed in our proof of Theorem \ref{tmpp_menger}.

We follow almost exactly the notation of \cite[Chapter VIII.4]{simpson99}

\begin{definition} 
We write $\mathcal{O}_+(a,X)$ to mean that $a=(e,i)$ for some $e$ and $i$ and that $e$ is an $X$-recursive index of an $X$-recursive linear ordering $\leq^X_e$ and $i\in\field(<^X_e)$.  If $\mathcal{O}_+(a,X)$ and $\mathcal{O}_+(b,X)$, we write $b<^X_{\mathcal{O}}a$ to mean that $a=(e,i)$, $b=(e,j)$, and $j<^X_e i$.

We write $\mathcal{O}(a,X)$ to mean that $\mathcal{O}_+(a,X)$ and there is no infinite sequence $(a_i)$ such that $a=a_0>^X_{\mathcal{O}}a_1>^X_{\mathcal{O}}>\cdots$.
\end{definition}

%

\begin{theorem}[\TLPP]
If $T$ is an ill-founded tree and $\prec$ is well-founded then there is a countable coded $\omega$-model $M$ such that $T\in M$, $M$ satisfies $\SDC$, and $M$ satisfies that there is a $\prec$-minimal path through $T$.
\label{models_of_dc}
\end{theorem}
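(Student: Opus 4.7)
The plan is to extend the tree construction from Theorem \ref{rlpp_omega_spiti} so that paths simultaneously code an $\omega$-model $M$ containing $T$ and a $\prec$-minimal path through $T$, while also forcing $M$ to satisfy $\SDC$. As before, view an infinite sequence $\Lambda$ as determining sets $M_i = \{n \mid \Lambda((i,n))>0\}$, using an extended language with set constants $S_i$ naming them; reserved indices of the form $(0,\lceil\phi\rceil)$ are forced to code $\{n \mid M\models\phi(n)\}$ for every arithmetic $\phi$ in this language, which guarantees closure under arithmetic comprehension. Add two further reserved indices: one forced to equal $T$ (by a trivial validity condition), and one coding a candidate infinite path $\Lambda^\ast$ with validity conditions ensuring that every finite initial segment of $\Lambda^\ast$ is a node of $T$.

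To force $\SDC$, for each arithmetic formula $\theta(n,X,Y,Z)$ of the extended language reserve both an index whose value records whether $M$ satisfies $\forall n\forall X\exists Y\exists Z\,\theta$ and a family of indices coding a putative DC sequence $(Y_n)$ with witnesses $(Z_n)$ for $\theta$. Validity requires that whenever the recorded truth value is set to true and the $(Y_n),(Z_n)$ branch is nontrivial, the coded sequence actually satisfies $\theta(n,Y_n,Y_{n+1},Z_n)$ in $M$ for each $n$. Extend the $lvl$ stratification from the previous theorem so that the indices encoding arithmetic truth in the extended language sit below any $\SDC$ index that might reference them, and so that the indices for $\Lambda^\ast$ and for any candidate DC sequence sit above all the indices they reference.

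Then apply $\RLPP{\alpha}$ with a well-ordering $\alpha$ large enough to majorize the complexity of extracting a DC sequence from the premise (available from \TLPP), obtaining a relatively leftmost path $\Lambda$ through the tree, from which $M$ and $\Lambda^\ast$ are read off. Arithmetic closure, $\prec$-minimality of $\Lambda^\ast$ within $M$, and closure under $\SDC$ each follow by the same leftward-shift pattern used in Theorem \ref{rlpp_omega_spiti}: if $\Lambda^\ast$ is not $\prec$-minimal or if some required DC sequence is missing, the offending entries in $\Lambda$ can be overwritten with the externally available replacement and all higher-level coordinates reset, producing a path strictly to the left of $\Lambda$ that is $\Sigma_\alpha$ in $T\oplus\Lambda$, contradicting minimality. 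The principal obstacle is the organizational one: verifying that the $lvl$ stratification correctly handles the interdependence among the three new kinds of branches (arithmetic truth, the path through $T$, and the DC sequences) so that every leftward-shift argument can restore validity below the shifted coordinate by resetting higher-level coordinates to $1$.
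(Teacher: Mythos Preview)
Your approach is genuinely different from the paper's and, while not obviously wrong, leaves the hardest step undone.  The paper does \emph{not} extend the tree construction of Theorem~\ref{rlpp_omega_spiti}.  Instead it runs the standard pseudo-hierarchy argument of \cite[Lemmas~VIII.4.18--19]{simpson99}: one introduces a $\Sigma^1_1$ predicate $\mathcal{O}_1(a,T)$ asserting the existence of a path $\Lambda$ through $T$ together with a countably coded $\omega$-model of {\ACA} which contains $T$ and $\Lambda$, believes $\mathcal{O}(a,T)\wedge\exists Y\,H(a,Y,T\oplus\Lambda)$, and believes $\Lambda$ is $\prec$-minimal.  {\TLPP} is invoked exactly once, to show that $\mathcal{O}(a,T)$ implies $\mathcal{O}_1(a,T)$: apply the relative minimal path principle along the well-order coded by $a$ to get $\Lambda$, and let $M$ be the sets Turing below the $a$-th jump of $T\oplus\Lambda$.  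Since $\mathcal{O}_1$ is $\Sigma^1_1$ it cannot coincide with the genuine $\mathcal{O}$, so some $a^*$ satisfies $\mathcal{O}_1(a^*,T)\wedge\neg\mathcal{O}(a^*,T)$; the witnessing model $M^*$ then contains an ill-founded jump hierarchy over $T\oplus\Lambda$, and Simpson's Lemma~VIII.4.19 extracts an $\SDC$-submodel $M\subseteq M^*$ still containing $T$ and $\Lambda$, which inherits the belief that $\Lambda$ is $\prec$-minimal.

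The step you label ``organizational'' is where the real work lies, and your sketch does not discharge it.  A DC sequence for $\theta$ is a sequence of sets \emph{in $M$}, so the leftward shift must search all of $M$ for the $Y_n$'s; those $Y_n$ may sit at indices of arbitrarily high $lvl$.  If you reset those indices after the shift, the installed sequence may no longer satisfy $\theta$; if you do not, validity at those higher indices (which can reference the DC branch you just modified via constants $S_k$) may fail.  Storing the sets $Y_n$ directly rather than indices into $M$ mitigates this, but you then still owe an explanation of how the bit ``$M$ satisfies the premise of this instance'' is forced correct, since that is a property of the whole model rather than of any finite initial segment, and of why the resulting $lvl$ function is globally well-defined and monotone in the required sense.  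None of this is addressed.  The paper's route avoids the issue entirely by outsourcing the construction of the $\SDC$ model to the off-the-shelf Simpson machinery, paying for it with the one appeal to $\Sigma^1_1$ boundedness.
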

\begin{proof} 
We first carry out the proof of Lemma VIII.4.18 of \cite{simpson99}, taking into account that we need to also include a path through $T$ which will become our $\prec$-minimal path.

Let $\mathcal{O}_1(a,T)$ be a $\Sigma_1^1$ formula stating that there is an infinite path $\Lambda$ through $T$ such that:
\begin{enumerate} 
  \item $\mathcal{O}_+(a,T)$,
  \item There is a countably coded $\omega$-model $M$ of {\ACA} such that $T\in M$, $\Lambda\in M$, and $M$ satisfies $\mathcal{O}(a,T)\wedge\exists Y H(a,Y,T\oplus\Lambda)$ and $M$ satisfies that $\Lambda$ is a $\prec$-minimal path through $T$.
\end{enumerate}

If $\mathcal{O}(a,T)$ holds then certainly $\mathcal{O}_1(a,T)$, $a=(e,i)$, and since $<_e^T\upharpoonright i$ is a well-order, there is a $\Lambda$ such that no path computable in a $Y$ satisfying $H(a,Y,T\oplus\Lambda)$ is $\prec$ $\Lambda$.  Since {\TLPP} implies {\ATR}, we have some $Y$ such that $H(a,Y,T\oplus\Lambda)$, and we may take $M$ to be the set of sets Turing reducible to $Y$.

Since $\mathcal{O}_1(a,T)$ is $\Sigma_1^1$, $\mathcal{O}_1(a,T)$ cannot be equivalent to $\mathcal{O}(a,T)$, so there is an $a^*$ such that $\mathcal{O}_1(a^*,T)\wedge\neg\mathcal{O}(a^*,T)$, and therefore an $\omega$-model $M^*$ of {\ACA} such that $T\in M^*$, $\Lambda\in M^*$, $M^*$ satisfies $\mathcal{O}(a^*,T)$, $M^*$ satisfies $\exists Y H(a^*,T\oplus\Lambda,Y)$, and $M^*$ satisfies that $\Lambda$ is a $\prec$-minimal path through $T$.

The proof of Lemma VIII.4.19 of \cite{simpson99} now shows that there is a model $M\subseteq M^*$ of $\SDC$ containing $T$ and $\Lambda$; it follows that $M$ believes $\Lambda$ is a $\prec$-minimal path through $T$.
\end{proof}

\section{Higman's and Kruskal's Theorems}
\begin{definition} 
$Q$ is a \emph{well-quasi-order (wqo)} if $Q$ is a partial order and whenever $\Lambda:\mathbb{N}\rightarrow Q$, there are $i<j$ such that $\Lambda(i)\preceq_Q\Lambda(j)$.

A sequence $\sigma$ from $Q$ is \emph{bad} if there is no $i<j$ such that $\sigma(i)\preceq_Q\sigma(j)$.
\end{definition}
$Q$ is a well-quasi-order iff the tree of bad sequences from $Q$ is well-founded.

\begin{definition} 
If $Q$ is a partial order, $Q^{<\omega}$ is the set of finite sequences from $Q$ and $\prec^{\omega}_Q$ is given by $\sigma\preceq\tau$ iff there is an order-preserving $\pi:[0,|\sigma|-1]\rightarrow[0,|\tau|-1]$ such that $\sigma(i)\preceq\tau(\pi(i))$ for all $i<|\sigma|$.
\end{definition}

Nash-Williams gave the following short proof of Higman's Theorem \cite{nash_williams:MR0153601}:
\begin{theorem}
If $Q$ is a wqo then so is $Q^{<\omega}$.
\end{theorem}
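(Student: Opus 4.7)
The plan is to carry out Nash-Williams' classical minimal bad sequence argument inside the framework of this paper. Suppose toward contradiction that $Q^{<\omega}$ is not a wqo; then the tree $T$ of bad sequences from $Q^{<\omega}$ is ill-founded. Equip $Q^{<\omega}$ with the partial order defined by $\sigma \prec \tau$ iff $|\sigma| < |\tau|$; this is well-founded because the length function embeds it into $<$ on $\mathbb{N}$. Extending $\prec$ to infinite sequences by the paper's convention, invoke {\MPP} (or a suitable {\RMPPO{n}}) to obtain a $\prec$-minimal path $\Lambda$ through $T$: a bad sequence $\Lambda \colon \mathbb{N} \to Q^{<\omega}$ such that no bad sequence $\Lambda'$ with $\Lambda' \prec \Lambda$ exists in the relevant relative complexity class.

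Each $\Lambda(n)$ must be nonempty, for otherwise $\Lambda(n) \preceq^\omega_Q \Lambda(n+1)$ holds vacuously, contradicting badness. Write $\Lambda(n) = \langle q_n \rangle {}^\frown \tau_n$. Since $Q$ is a wqo and {\ACA} is available by Theorem \ref{rlpp_atr}, the sequence $(q_n)$ admits an infinite weakly ascending subsequence $q_{n_0} \preceq_Q q_{n_1} \preceq_Q \cdots$ obtained by the standard iterated-pigeonhole argument. Now form
\[\Lambda' \;=\; \Lambda(0),\,\ldots,\,\Lambda(n_0-1),\,\tau_{n_0},\,\tau_{n_1},\,\tau_{n_2},\,\ldots .\]
Any hypothetical $\Lambda(i) \preceq^\omega_Q \tau_{n_j}$ with $i < n_0$ composes with the embedding $\tau_{n_j} \sqsubseteq \Lambda(n_j)$ to yield $\Lambda(i) \preceq^\omega_Q \Lambda(n_j)$, and any $\tau_{n_j} \preceq^\omega_Q \tau_{n_k}$ for $j < k$ combined with $q_{n_j} \preceq_Q q_{n_k}$ extends to $\Lambda(n_j) \preceq^\omega_Q \Lambda(n_k)$; both possibilities contradict badness of $\Lambda$. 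Hence $\Lambda'$ is bad. Since $\Lambda'$ agrees with $\Lambda$ on $[0, n_0)$ and has $|\Lambda'(n_0)| = |\tau_{n_0}| < |\Lambda(n_0)|$, we have $\Lambda' \prec \Lambda$ in the extended order, contradicting minimality of $\Lambda$.

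The main obstacle in fitting this plan into the \emph{relative} framework is bookkeeping the arithmetic complexity of $\Lambda'$ as a set built from $\Lambda$. The extraction of $(n_k)$ is where quantifier alternation accumulates: selecting the ``good'' indices $n$ (those with $q_n \preceq_Q q_m$ for infinitely many $m$) is $\Pi^0_2$ in $\Lambda \oplus Q$, and the subsequent threading through these good indices keeps $\Lambda'$ at the $\Sigma^0_2$ level relative to $\Lambda$. This suggests {\RMPPO{2}} already suffices, consistent with the upper bound the paper announces for the closely related Kruskal theorem, and in plain {\MPP} the argument goes through without any complexity bookkeeping at all.
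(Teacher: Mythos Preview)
Your argument is correct and follows essentially the same Nash-Williams minimal-bad-sequence strategy as the paper; the only differences are cosmetic (you strip the first element of each $\Lambda(n)$ rather than the last, and invoke the wqo chain lemma directly where the paper phrases it via Ramsey's Theorem for Pairs). The paper records the slightly sharper observation that the auxiliary bad sequence is $\Sigma_1$ in $\Lambda$, so already \RLPPO{1} suffices for Higman's Theorem rather than the \RMPPO{2} you estimate.
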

\begin{proof} 
Suppose $Q$ is a wqo but $Q^{<\omega}$ is not.  Define $\sigma\leq\tau$ if $|\sigma|\leq|\tau|$.  Let $\Lambda$ be a leftmost path through the tree of bad sequences from $Q^{<\omega}$.  Clearly $\Lambda(i)\neq\langle\rangle$ for any $i$, since then we would have $\Lambda(i)=\langle\rangle\preceq^{<\omega}_Q\Lambda(i+1)$.  So we may write $\Lambda(i)=\Lambda'(i)^\frown\langle q(i)\rangle$ for all $i$.  Define $c(i,j)=0$ iff $q(i)\preceq_Q q(j)$, and $c(i,j)=1$ otherwise.  By Ramsey's Theorem for Pairs, there is an infinite set $S$ such that $c$ is homogeneous on $S$.

If $c$ were homogeneously $1$, the function $q\upharpoonright S$ would give an infinite sequence in $Q$ contradicting the fact that $Q$ is a wqo.  So $c$ must be homogeneously $0$.  If for any $i<j\in S$, $\Lambda'(i)\preceq^{<\omega}_Q\Lambda'(j)$ then we would have $\Lambda(i)\preceq^{<\omega}_Q\Lambda(j)$ since $q(i)\preceq_Q q(j)$.  This contradicts the construction of $\Lambda$.

Let $\{i_0,i_1,\ldots\}$ be the increasing enumeration of $S$.  Define $\Lambda^*(i)=\Lambda(i)$ if $i<i_0$ and $\Lambda^*(i)=\Lambda'(i_{i-i_0})$ if $i\geq i_0$.  Then for any $i<j$, either $i<j<i_0$, so $\Lambda^*(i)=\Lambda(i)\not\preceq^{<\omega}_Q\Lambda(j)=\Lambda^*(j)$, or $i<i_0\leq j$, in which case $\Lambda^*(i)=\Lambda(i)\not\preceq^{<\omega}_Q\Lambda(i_{j-i_0})\succeq_Q^{<\omega}\Lambda^*(j)$, or $i_0\leq i<j$, in which case $\Lambda^*(i)=\Lambda(i_{i-i_0})\not\preceq^{<\omega}_Q\Lambda(i_{j-i_0})=\Lambda^*(j)$.  So $\Lambda^*$ is an infinite bad sequence and $\Lambda^*<\Lambda$ contradicting the fact that $\Lambda$ is a leftmost path.
\end{proof}
We may observe that $\Lambda^*$ in the proof is $\Sigma_1$, and therefore that this proof goes through without change in \RLPP{1}.

Sch\"utte and Simpson \cite{simpson:MR822617,simpson:MR961012} gave a different proof of Higman's Theorem in {\ACA}.  In particular, their proof shows that if there is an infinite bad sequence $\Lambda$ from $Q^{<\omega}$ then there is an infinite bad sequence $\Lambda'$ from $Q$ such that $\Lambda'$ is $\Sigma_2$ in $\Lambda$.

We now wish to discuss the proof of Kruskal's Theorem; inconveniently, the theorem concerns trees in a slightly different sense than we have been using.  To avoid confusion, we will call these $K$-trees.
\begin{definition} 
A \emph{$K$-tree} is a finite set $T$ together with a partial order $\leq_T$ such that:
\begin{itemize} 
  \item $T$ has a unique root $r\in T$ such that for all $t\in T$, $r\leq_T t$ and if $t\neq r$ then $t\not\leq_T r$, and
  \item If $t\leq_T s$ and $u\leq_T s$ then either $t\leq_Tu$ or $u\leq_T t$.
\end{itemize}

We write $t\wedge_Tu$ for the infimum of $t$ and $u$, so $t\wedge_T u\leq_T t$, $t\wedge_T u\leq_Tu$, and if both $v\leq_T t$ and $v\leq_T u$ then $v\leq_T t\wedge_T u$.

If $Q$ is a quasi-ordering, a \emph{$Q$-labeled $K$-tree} is a pair $(T,f)$ where $T$ is a $K$-tree and $f:T\rightarrow Q$.  We define a quasi-ordering $\prec_K$ on $Q$-labeled $K$-trees by setting $(T,f)\preceq_K(T',f')$ if there is a function $\pi:T\rightarrow T'$ such that for each $t,u\in T$, $\pi(t\wedge_T u)=\pi(t)\wedge_{T'}\pi(u)$ and $f(t)\preceq_Q \pi(f'(t))$.
\end{definition}

\begin{theorem} [\RLPPO{2}]
If $Q$ is a wqo then so are the $Q$-labeled $K$-trees under $\prec_K$.
\label{rmpp_kruskal}
\end{theorem}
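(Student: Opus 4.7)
The plan is to execute Nash-Williams' minimal bad sequence proof of Kruskal's Theorem, verifying that the auxiliary bad sequences produced along the way are $\Sigma_2$-in-$\Lambda$, which is exactly the budget that \RLPPO{2} provides. Suppose for contradiction that $Q$ is a wqo but that the $Q$-labeled $K$-trees under $\prec_K$ are not. Order $Q$-labeled $K$-trees by size, $(T,f)\prec(T',f')$ iff $|T|<|T'|$, a well-founded partial order. By Theorem \ref{tmpp_is_tlpp}, we may invoke the $\Sigma_2$-relative minimal path principle with this order on the tree of bad sequences of $Q$-labeled $K$-trees: fix a bad sequence $\Lambda$ such that no bad sequence which is $\Sigma_2$-in-$\Lambda$ is $\prec$-strictly below $\Lambda$. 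Write $\Lambda(i)=(T_i,f_i)$, let $r_i$ be the root label of $\Lambda(i)$, let $\tau_i=\langle s^i_1,\ldots,s^i_{k_i}\rangle$ list the immediate subtrees of $\Lambda(i)$, and set $B=\bigcup_i\{s^i_j:1\leq j\leq k_i\}$.

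The first step is to show that $B$ has no $\Sigma_2$-in-$\Lambda$ bad sequence. Given such a $\sigma$, let $i_0$ be the least $i$ with $\sigma(0)$ an immediate subtree of $\Lambda(i)$, and splice: define $\Lambda^*(j)=\Lambda(j)$ for $j<i_0$ and $\Lambda^*(i_0+k)=\sigma(k)$. $\Lambda^*$ is bad, the one nontrivial case being $j<i_0\leq i_0+k$, where a hypothetical $\Lambda(j)\preceq_K\sigma(k)$ would extend (since $\sigma(k)$ is an immediate subtree of some $\Lambda(i')$ with $i'\geq i_0>j$) to $\Lambda(j)\preceq_K\Lambda(i')$, contradicting badness of $\Lambda$. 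Since $\sigma(0)$ is a proper immediate subtree of $\Lambda(i_0)$, $|\Lambda^*(i_0)|<|\Lambda(i_0)|$, so $\Lambda^*\prec\Lambda$; and $\Lambda^*$, being computable in $\sigma\oplus\Lambda$, is $\Sigma_2$-in-$\Lambda$, contradicting the choice of $\Lambda$.

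To derive the final contradiction, apply Ramsey's theorem for pairs (provable in \ACA, which follows from \RLPPO{0}) together with $Q$ being wqo to extract an infinite subsequence $m_0<m_1<\cdots$ with $r_{m_p}\preceq_Q r_{m_q}$ for all $p<q$. The sequence $(\tau_{m_p})_p$ must then be bad in $B^{<\omega}$ under the Higman order $\preceq^{<\omega}_B$: if $\tau_{m_p}\preceq^{<\omega}_B\tau_{m_q}$ for some $p<q$, matching roots and using the Higman embedding to send each immediate subtree of $\Lambda(m_p)$ into the corresponding subtree of $\Lambda(m_q)$ would yield $\Lambda(m_p)\preceq_K\Lambda(m_q)$, contradicting $\Lambda$ bad. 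Now apply the Sch\"utte--Simpson proof of Higman's Theorem in \ACA to the bad sequence $(\tau_{m_p})$, which is arithmetic in $\Lambda$: it produces a bad sequence in $B$ that is $\Sigma_2$-in-$(\tau_{m_p})$, hence $\Sigma_2$-in-$\Lambda$, contradicting the first step.

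The main technical obstacle is matching the complexity budgets precisely: the $\Sigma_2$ bound from Sch\"utte--Simpson's inversion of Higman must line up exactly with the $\Sigma_2$-minimality supplied by \RLPPO{2}. This is why $\Sigma_2$ is the right level here --- one layer of arithmetic complexity beyond what sufficed for Higman in \RLPPO{1}, reflecting the single extra wqo-closure operation being iterated in the proof of Kruskal's Theorem over the proof of Higman's Theorem.
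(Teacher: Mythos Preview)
Your approach is essentially the paper's---Nash-Williams' minimal bad sequence argument with complexity tracking---but there are two gaps.

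First, the splicing step is not quite right. You set $i_0$ to be the least $i$ with $\sigma(0)$ an immediate subtree of $\Lambda(i)$, and then assert that each $\sigma(k)$ is an immediate subtree of some $\Lambda(i')$ with $i'\geq i_0$. Nothing ensures this: $\sigma(1)$ could be an immediate subtree only of some $\Lambda(i')$ with $i'<i_0$, and then for $j$ with $i'<j<i_0$ your badness check fails---from $\Lambda(j)\preceq_K\sigma(1)\preceq_K\Lambda(i')$ you would only get $\Lambda(j)\preceq_K\Lambda(i')$ with $j>i'$, which badness of $\Lambda$ does not forbid. The paper instead lets $k_i$ be the least index with $\sigma(i)\in\mathcal{F}(\Lambda(k_i))$, takes $k=\min_i k_i$, chooses the least $i$ with $k_i=k$, and splices starting from $\sigma(i)$ rather than $\sigma(0)$; this guarantees $k_{j}\geq k$ for every term used.

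Second---and this is the substantive gap for the $\Sigma_2$ bound---the inference ``$\Sigma_2$-in-$(\tau_{m_p})$, hence $\Sigma_2$-in-$\Lambda$'' is invalid. The sequence $(\tau_{m_p})$ is computable from $\Lambda$ together with the Ramsey homogeneous set $\{m_p\}$, which you have only bounded as ``arithmetic in $\Lambda$''; composing a $\Sigma_2$ operation with an arbitrary arithmetic reduction does not yield $\Sigma_2$. The paper closes exactly this gap by invoking the Cholak--Jockusch--Slaman theorem that the homogeneous set for \RT{} may be taken $\mathrm{low}_2$ over the instance, and this is what keeps the final spliced sequence within the $\Sigma_2$ budget that \RLPPO{2} provides. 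Without controlling the complexity of the Ramsey set, your argument only places Kruskal's Theorem in $\RLPPO{n}$ for some unspecified finite $n$, not in $\RLPPO{2}$.
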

\begin{proof} 
Suppose $Q$ is a wqo but the $Q$-labeled $K$-trees are not.  Define $\prec^*_K$ to by setting $(T,f)\prec^*_K(T',f')$ if $|T'|<|T|$.  Then the tree of bad sequences of $Q$-labeled $K$-trees is ill-founded, so let $\Lambda$ be a relatively $\prec^*_K$-minimal bad sequence given by \RLPP{2}.

Given a $Q$-labeled $K$-tree $(T,f)$, let $\mathcal{F}(T,f)$ be the finite set of proper subtrees of $(T,f)$.  If $T$ is a tree, write $r_T$ for the root of $T$ and $\sigma_{T,f}$ for the sequence of immediate successors of $r_T$ (in an arbitrary order).  We may equate $(T,f)$ with the pair $(f(r_T),\sigma_{T,f})\in Q\times\mathcal{F}(T,f)^{<\omega}$.  In particular, if $f(r_T)\preceq_Qf'(r_{T'})$ and $\sigma_{T,f}\preceq^{<\omega}_K\sigma_{T',f'}$ then $(T,f)\preceq_K(T',f')$.

For each $i$, we have $\Lambda(i)=(T_i,f_i)$.  For $i<j$, define $c(i,j)=0$ if $f_i(r_{T_i})\preceq_Q f_j(r_{T_j})$ and $c(i,j)=1$ otherwise.  By Ramsey's Theorem for pairs, we may restrict $\Lambda$ to a subsequence where $c$ is constant, and since $Q$ is a wqo, it must be that $c(i,j)$ is constantly $0$ on this subsequence.  In particular, since $\Lambda(i)\not\preceq_K\Lambda(j)$ when $i<j$, we have $\sigma_{T_i,f_i}\not\preceq^{<\omega}_K\sigma_{T_j,f_j}$.

Let $\mathcal{S}=\bigcup_i\mathcal{F}(\Lambda(i))$.  Then $\Lambda$ gives an infinite bad sequence in $\mathcal{S}^{<\omega}$.  By Higman's Theorem, there is an infinite bad sequence $\Lambda'(i)$ through $\mathcal{S}$.  For each $i$, let $k_i$ be least such that $\Lambda'(i)\in\mathcal{F}(\Lambda(k_i))$.  Let $k=\min_i k_i$ and choose $i$ least such that $k_i=k$.  Define
\[\Lambda^*(j)=\left\{\begin{array}{ll}
\Lambda(j)&\text{if }j<k\\
\Lambda'(j-k+i)&\text{if }k\leq j\\
\end{array}\right.\]
Since $\Lambda^*\upharpoonright k=\Lambda\upharpoonright k$ and $\Lambda^*(k)=\Lambda'(i)\in\mathcal{F}(\Lambda(k))$, we have $\Lambda^*\not\preceq\Lambda$.  To see that $\Lambda^*$ is bad, let $j<j'$ be given; if $j'<k$ then $\Lambda^*(j)=\Lambda(j)\not\preceq\Lambda(j')=\Lambda^*(j')$ and if $k\leq j$ then $\Lambda^*(j)=\Lambda'(j-k+1)\not\preceq\Lambda'(j'-k+1)=\Lambda^*(j')$.  If $j<k\leq j'$ then $\Lambda^*(j)=\Lambda(j)\not\preceq\Lambda(k_{j'-k+1})$ and since $\Lambda^*(j')=\Lambda'(j'-k+1)\in\mathcal{F}(\Lambda(k_{j'-k+1}))$, we must have $\Lambda^*(j)\not\preceq\Lambda^*(j')$.

But then $\Lambda^*$ is an infinite path to the left of $\Lambda$, contradicting the choice of $\Lambda$.

To see that the proof goes through in \RLPPO{2}, we need only observe that we applied Higman's Theorem to a path given by Ramsey's Theorem for Pairs, and since we may choose the path given by Ramsey's Theorem $\mathrm{low}_2$ in $\Lambda$ (see \cite{cholak:MR1825173}), it follows that $\Lambda^*$ can be chosen $\Sigma_2$ in $\Lambda$.
\end{proof}

A complete analysis of the proof-theoretic strength of Kruskal's Theorem was given by Rathjen and Weiermann \cite{rathjen:MR1212407}; \RLPPO{2} is close to (but not exactly) tight, at least with respect to proof-theoretic strength.

\section{The Arithmetic Relative Leftmost Path Principle}
\label{arithmetic_rpp}
In this section we prove the following:
\begin{theorem} 
For every $n>0$, \SPiti{n+2} proves \RLPP{n}.
\end{theorem}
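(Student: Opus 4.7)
The plan is to argue by contradiction, using $\Pi_{n+2}(\Pi^1_1)$-transfinite induction along a suitable well-ordering to show that no ``all paths bad'' scenario can persist. Fix an ill-founded tree $T$, and call a path $\Lambda$ through $T$ \emph{bad} if some path $\Sigma_n$ in $T\oplus\Lambda$ is strictly to the left of $\Lambda$. The first step is a complexity calculation: badness is an arithmetic condition on $T\oplus\Lambda$ of complexity roughly $\Sigma^0_{n+2}$, since it involves an existential quantifier over indices of would-be $\Sigma_n$-in-$T\oplus\Lambda$ attackers together with arithmetic path-hood and lex-order checks relative to the $(n-1)$st jump.

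Assuming toward contradiction that every path through $T$ is bad, I would fix an initial path $\Lambda_0$ (produced by \ACA{}) and define a canonical arithmetic improvement operator $F$ sending $\Lambda$ to $W_e^{(T\oplus\Lambda)^{(n-1)}}$ where $e$ is the least index witnessing the badness of $\Lambda$. Iteration produces $\Lambda_0 >_{\mathrm{lex}} \Lambda_1 >_{\mathrm{lex}} \cdots$. Because the sequence $(\Lambda_i(k))_i$ at each fixed coordinate $k$ is non-increasing in $\mathbb{N}$ it stabilizes, so a pointwise limit $\Lambda_\omega$ is definable. The limit is itself bad by hypothesis, so the construction extends past $\omega$ and, in principle, along any well-ordering one can name.

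The heart of the argument is to express the assertion ``the canonical descending approximant $\Lambda_\gamma$ is well-defined and still bad'' as a $\Pi_{n+2}(\Pi^1_1)$ formula $\phi(\gamma)$: the $\Pi^1_1$ matrix absorbs the infinite-path conditions, while the $n+2$ arithmetic alternations bracket the $\Sigma_n$-indexed descent and the stage/limit manipulation. Applying the transfinite induction axiom of \SPiti{n+2} along a sufficiently long well-ordering then propagates $\phi(\gamma)$ beyond any point at which a strictly decreasing lex sequence of paths through $T$ can be sustained; since any such sequence can change each coordinate only finitely often, the descent must eventually terminate, producing a good path and contradicting the original assumption.

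The main obstacle will be calibrating the induction formula to land in $\Pi_{n+2}(\Pi^1_1)$ precisely. The $+2$ shift mirrors the one in Theorem~\ref{rlpp_omega_spiti}: one alternation absorbs the quantifier over $\Sigma_n$-in-$T\oplus\Lambda$ indices, and the second handles the stage/limit bookkeeping and the path-hood condition at the inner $\Pi^1_1$ layer. Ensuring that this bookkeeping does not inflate the complexity, especially at limit stages where the pointwise limit's properties must be expressed without adding a further set quantifier, is the delicate point of the argument.
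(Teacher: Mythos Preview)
Your approach has a genuine gap at the final step: there is no ``sufficiently long well-ordering'' along which the descent must terminate. The lexicographic order on paths through $T$ admits strictly descending chains of every countable order type (think of $T=\omega^{<\omega}$), so under your hypothesis that every path is bad, the iteration $\Lambda_0>\Lambda_1>\cdots>\Lambda_\omega>\Lambda_{\omega+1}>\cdots$ simply continues through any well-ordering $\alpha$ you feed it. Transfinite induction along $\alpha$ would at best confirm that $\Lambda_\gamma$ is defined and bad for every $\gamma<\alpha$; it does not produce a contradiction. The informal justification ``each coordinate changes only finitely often'' is true along any fixed $\omega$-block, but at limit stages you pass to the pointwise limit and the later coordinates are free to jump up again, so there is no global bound on the length of the descent. (A smaller slip: coordinate $k$ is not non-increasing from the start, only eventually non-increasing once coordinates $0,\ldots,k-1$ have stabilized; the limit still exists, but the reasoning needs adjusting.)

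The paper's argument is structurally quite different and is designed precisely to manufacture a well-founded object to induct along. It builds a tree $\widehat{\mathcal{T}_n(T)}^+$ whose nodes are finite sets of rank~$\leq n$ formulas encoding a partial guess $\sigma_s$ at a leftmost branch of $T$, guesses at truth values of $\Sigma_n$ sentences about that branch, and explicit witnesses excluding each candidate $\Sigma_n$ formula from defining a path to the left. A path through $\widehat{\mathcal{T}_n(T)}^+$ that decides every formula yields a $\Sigma_n$-relatively leftmost path in $T$; if no such path exists, then $\widehat{\mathcal{T}_n(T)}^+$ is well-founded in the relevant sense, and one performs $\Pi_{n+2}(\Pi^1_1)$ transfinite induction along it for a carefully engineered predicate $\widehat{WF}_n$ (built inductively through $WF_0,\ldots,WF_n$) to conclude $WF_0(\emptyset)$, which unwinds to ``$T$ is well-founded,'' contradicting the hypothesis. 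The point is that the well-founded tree one inducts along is extracted from the \emph{failure} of \RLPP{n}, not chosen in advance; your argument lacks any analogue of this step.
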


Throughout this section, fix a tree $T$ and a well-ordering $\prec$.  We write $T_\sigma$ for $\{\tau\in T\mid \sigma\sqsubseteq\tau\}$.

All definitions in this section are assumed to be given in {\ACA}.

Before launching into the rather technical proof, we outline the main ideas of the argument.  We will construct a tree $\widehat{\mathcal{T}_n(T)}^+$ with the property that any path through this tree computes a leftmost path through $T$.  Roughly speaking, elements of $\widehat{\mathcal{T}_n(T)}^+$ consist of a distinguished finite sequence in $T$, viewed as a guess at a leftmost path through $T$, together with ``guesses'' at the truth values of finitely many sentences $\Sigma_n$ in the path through $T$, and also together with explicit witnesses showing that certain $\Sigma_n$ formulas fail to define a path further to the left.  An infinite path through this tree will have to correctly predict the value of every $\Sigma_n$ sentence, and produce witnesses showing that no $\Sigma_n$ formula defines a path further to the left; failure to do so will lead to the path being cut off.

If $\widehat{\mathcal{T}_n(T)}^+$ is ill-founded, we will have the desired leftmost path.  If $\widehat{\mathcal{T}_n(T)}^+$ is well-founded, we will have to show that $T$ is well-founded as well; the key idea is that because $\widehat{\mathcal{T}_n(T)}^+$ is well-founded, we may apply transfinite induction along it, though it will take some work to define the right formula to perform transfinite induction with.

We now set about our construction.  $\widehat{\mathcal{T}_n(T)}^+$ will be the last in a tower of trees.

\begin{definition} 
Let $\mathcal{L}$ be the language of first-order arithmetic, including a pairing function $(\cdot,\cdot)$ and the corresponding projections $p_1,p_2$, with a new function symbol $F$ and a new predicate symbol $\hat{T}$.  We define the \emph{rank $n$ formulas} and the \emph{basic rank $n$ formulas} inductively by:
\begin{itemize} 
  \item $F(i)=j$ where $i,j$ are terms is a basic rank $0$ formula,
  \item All other atomic formulas are (non-basic) rank $0$ formulas,
  \item If $\phi$ is a rank $n$ formula then $\exists x\phi$ and $\forall x\phi$ are basic rank $n+1$ formula,
  \item The rank $n$ formulas contain the basic rank $n$ formulas and are closed under $\wedge,\vee,\neg,\rightarrow$.
\end{itemize}

We write $\mathcal{F}_n$ for the collection of basic formulas of rank $n$ and write $rk(\phi)$ for the least $n$ such that $\phi$ is a formula of rank $n$.

When $s$ is a set of $\mathcal{L}$-formulas, we define $\hat s=s\cup\{\hat{T}(n)\mid n\in T\}\cup\{\neg\hat{T}(n)\mid n\not\in T\}$.  We take $\vdash$ to be the usual deduction relation for first-order logic.

\end{definition}

We now define the trees $\mathcal{T}_n(T)$.  An initial segment of $\mathcal{T}_n(T)$ combines a sequence from $T$ with a guess at the values of the formulas $\Sigma_n$ in a path extending this sequence.
\begin{definition} 
For each $n$, define $\mathcal{T}_n(T)$ to consist of those finite sets $s$ of $\mathcal{L}$-formulas such that:
\begin{itemize} 
  \item If $\phi\in s$ then $\phi$ is a closed basic formula of rank $\leq n$,
  \item $\hat s$ is consistent,
  \item If $F(i)=k\in s$ and $i'< i$ then there is a $j'$ such that $F(i')=j'\in s$,
  \item If $F(i)=j\in s$ then the sequence $\langle F(0), \ldots, F(i)\rangle\in T$,
  \item If $\exists x \phi(x)\in s$ then there is some $i$ such that $\hat s\cap\mathcal{F}_{rk(\phi)}\vdash\phi(i)$.
\end{itemize}

We say $s$ \emph{decides} $F(i)=j$ if there is some $j'$ such that $F(i)=j'\in s$; we say $s$ decides $\exists x\phi$ if either $\exists x\phi\in s$ or $\forall x\neg\phi\in s$.

If $i$ is largest such that for some $j$, $F(i)=j\in s$, we write $\sigma_s$ for $\langle F(0),\ldots,F(i)\rangle$.

If $m\leq n$, define $\pi^n_m:\mathcal{T}_n(T)\rightarrow\mathcal{T}_m(T)$ by $\pi^n_m(s)=\{\phi\in s\mid rk(\phi)\leq m\}$.

If $m<n$, $t\in\mathcal{T}_m(T)$, $s\in\mathcal{T}_n(T)$, we write $t\prec^{+1}s$ if there is a formula $\forall x\phi\in s$ with $rk(\phi)=n$ such that $\hat t\vdash\exists x\neg\phi$.  
\end{definition}
Note that the construction of $\mathcal{T}_n(T)$ requires arithmetic comprehension.  (We could probably, at significant additional labor, reduce this to computable comprehension, since we are really only concerned with fairly direct proofs.)

When we write $t\prec^{+1}s$, we are usually interested in the case where $t\supseteq\pi^{n+1}_n(s)$.  In other words, just looking at $\pi^{n+1}_n(s)$, we had not yet found a witness to the formula $\neg\phi$, but $t$ is a way of extending $\pi^{n+1}_n(s)$ so that $\neg\phi$ must be true.  This induces a different element $t'\in\mathcal{T}_n(T)$ with $\pi^{n+1}_n(t')=t\supseteq\pi^{n+1}_n(s)$.  We think of $t'$ as being to the left of $s$ (as the notation $\prec^{+1}$ implies); this means that witnessed existential statements belong to the left of universal statements, and therefore that a leftmost path through $\mathcal{T}_n(T)$ is exactly a path in which we guess $\Sigma_n$ formulas correctly.

\begin{lemma}[\ACA]
If $s\in\mathcal{T}_{n+1}(T)$, $t\in\mathcal{T}_{n}(T)$, $t\supseteq\pi^{n+1}_n(s)$ and $t\not\prec^{+1}s$ then $t\cup s\in\mathcal{T}_{n+1}(T)$.
\end{lemma}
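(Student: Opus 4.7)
The plan is to verify each of the five clauses in the definition of $\mathcal T_{n+1}(T)$ for the set $t\cup s$. Four of them are bookkeeping; the substantive work is the consistency of $\widehat{t\cup s}$.

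For the bookkeeping, closure under closed basic formulas of rank $\leq n+1$ is immediate from the hypotheses $t\in\mathcal T_n(T)$ and $s\in\mathcal T_{n+1}(T)$. Each $F(i)=k$ is a basic rank-$0$ formula, so every $F$-formula in $s$ already lies in $\pi^{n+1}_n(s)\subseteq t$; hence the $F$-formulas of $t\cup s$ coincide with those of $t$, and the domain-closure and ``$\langle F(0),\ldots,F(i)\rangle\in T$'' clauses are inherited from $t\in\mathcal T_n(T)$. For the witness clause, take $\exists x\phi\in t\cup s$: if $\exists x\phi\in t$, the witness supplied by $\hat t\cap\mathcal F_{rk(\phi)}$ remains available from $\widehat{t\cup s}\cap\mathcal F_{rk(\phi)}$; if $\exists x\phi\in s$, then $rk(\phi)\leq n$, so every basic rank-$rk(\phi)$ formula of $s$ lies in $\pi^{n+1}_n(s)\subseteq t$, and the derivation $\hat s\cap\mathcal F_{rk(\phi)}\vdash\phi(i)$ given by $s\in\mathcal T_{n+1}(T)$ goes through unchanged in $\widehat{t\cup s}\cap\mathcal F_{rk(\phi)}$.

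For consistency, I reduce as follows. Because $\pi^{n+1}_n(s)\subseteq t$ and $\hat t$, $\hat s$ share the common $\hat T$-diagram, $\widehat{t\cup s}=\hat t\cup s_1$, where $s_1$ is the collection of rank-$(n+1)$ basic formulas of $s$. Split $s_1=E\cup U$ into existentials and universals. The witness argument in the previous paragraph shows that each $\exists x\phi\in E$ is already derivable from $\hat t$, so the consistency question reduces to whether $\hat t\cup U$ is consistent. The hypothesis $t\not\prec^{+1} s$ says precisely that $\hat t\cup\{\forall x\phi\}$ is consistent for each single $\forall x\phi\in U$.

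The main obstacle, and the step I expect to require the most care, is passing from this individual consistency to joint consistency of $\hat t$ with all of $U$. The plan is to argue by contradiction: if $\hat t\cup U$ were inconsistent, compactness would produce a finite derivable disjunction $\hat t\vdash\bigvee_{i\leq k}\exists x\neg\phi_i$ with each $\forall x\phi_i\in U$, and the task is to isolate some $i$ with $\hat t\vdash\exists x\neg\phi_i$, directly contradicting $t\not\prec^{+1} s$. The natural leverage is the consistency of $\hat s$, which already exhibits a single structure realizing all of $U$ simultaneously over the shared rank-$\leq n$ content (namely $\pi^{n+1}_n(s)$ plus the $\hat T$-diagram); combined with the single-formula character of the $\prec^{+1}$ condition, this should force exactly one offending disjunct to be provable from $\hat t$ alone.
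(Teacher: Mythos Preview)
Your approach mirrors the paper's exactly: both dismiss the four bookkeeping clauses and focus on the consistency of $\widehat{t\cup s}$; both observe that rank-$(n{+}1)$ existentials in $s$ are already derivable from $\hat t$ (via the witness clause and $\pi^{n+1}_n(s)\subseteq t$), reducing the question to whether $\hat t$ is jointly consistent with the set $U\subseteq s$ of rank-$(n{+}1)$ universals.

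The step you single out as ``requiring the most care'' is a genuine gap, and your proposed resolution does not close it. Knowing that $\hat t\cup\{\forall x\,\phi\}$ is consistent for each individual $\forall x\,\phi\in U$ does not yield joint consistency, and the consistency of $\hat s$ provides no leverage here: a model of $\hat s$ realizes $U$ over $\pi^{n+1}_n(s)$, not over the possibly larger $t$. Concretely, with $n=0$, take $t=\{F(0){=}0\}$ and
\[
s=\bigl\{\,\forall x\,(F(0){=}0\to F(1){=}0),\ \forall x\,(F(0){=}0\to F(1){=}1)\,\bigr\}.
\]
Then $s\in\mathcal T_1(T)$ (any structure with $F(0)\neq 0$ models $\hat s$), $\pi^1_0(s)=\emptyset\subseteq t$, and $t\not\prec^{+1}s$ since $\hat t$ does not decide $F(1)$ and hence proves neither $\exists x\,\neg\phi_i$. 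Yet $\widehat{t\cup s}$ derives both $F(1){=}0$ and $F(1){=}1$ and is inconsistent. So the passage from $\hat t\vdash\bigvee_i\exists x\,\neg\phi_i$ to a single disjunct really can fail. The paper's own proof asserts this step in one line (``$\hat t\vdash\neg\phi$ for some $\phi\in s$ of rank $n{+}1$'') without further argument; you are right to be suspicious of it, and the example shows the statement as written needs repair rather than just a more careful proof.
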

\begin{proof} 
We need only check that $\widehat{t\cup s}$ is consistent.  Suppose not.  Since $t$ is consistent, $\hat t\vdash\neg\phi$ for some $\phi\in s$ of rank $n+1$.  It cannot be that $\phi$ is universal, since then we would have $t\prec^{+1}s$, so $\phi$ must be existential.  But if $\phi$ is existential then $\pi^{n+1}_n(s)\vdash \phi$, and since $t$ is consistent and extends $\pi^{n+1}_n(s)$, we cannot have $\hat t\vdash\neg\phi$.
\end{proof}

\begin{definition} 
For each $n$, we define properties $WF'_n\subseteq\mathcal{T}_n(T)$ and $WF_n\subseteq\mathcal{T}_n(T)$ inductively as follows.

\begin{itemize} 
  \item $WF'_0(t)$ holds if:
	\begin{quote} 
 	  Suppose that for every $\tau\prec\sigma_t$, $T_\tau$ is well-founded.  Then $T_{\sigma_t}$ is well-founded.
	\end{quote}
  \item $WF'_{n+1}(t)$ holds if:
    \begin{quote}
    Suppose that for all $s\supseteq \pi_n^{n+1}(t)$ such that $s\prec^{+1}t$, $WF_n(s)$; then for all $s\supseteq\pi_n^{n+1}(t)$, $WF_n(s)$.
    \end{quote}
  \item $WF_n(t)$ holds if for every $s\supseteq t$ in $\mathcal{T}_n$, $WF'_n(s)$.
\end{itemize}
\end{definition}
We have stated $WF'_0$ and $WF_0$ to emphasize the similarity with $WF'_n$ and $WF_n$, however $WF'_0(t)$ actually immediately implies $WF_0(t)$: if $WF'_0(t)$ holds, $s\supseteq t$, and for every $\tau\prec\sigma_s$, $T_\tau$ is well-founded, then also for every $\tau\prec\sigma_t$, $T_\tau$ is well-founded, and therefore $T_{\sigma_t}$ is well-founded, which implies that $T_{\sigma_s}$ is well-founded.  This means that $WF_0$ is (equivalent to) a Boolean combination of $\Pi_0(\Pi^1_1)$ formulas, and so for each $n>0$, $WF_n$ is (equivalent to) a $\Pi_{n+1}(\Pi^1_1)$ formula.

$WF'_0$ (and therefore $WF_0$) captures the notion of ``not being an initial segment of the leftmost path'': $WF_0(t)$ holds if either the tree above $t$ is well-founded, or if some path to the left is ill-founded.  Thus the only elements failing $WF_0(t)$ are the initial segments of the leftmost path itself.  $WF'_{n+1}$ extends this to the higher order trees; we view $t\in\mathcal{T}_n(T)$ as consisting of two components: $\pi^{n+1}_n(t)$, which is the lower order content which should be addressed by lower order trees, and the remainder.  $WF'_{n+1}$ will be defined so that when $WF'_{n+1}(t)$ fails to hold, it must be that not only does $WF_n(\pi^{n+1}_n(t))$ fail, essentially saying that $\pi^{n+1}_n(t)$ is an initial segment of a leftmost path, but that $t$ is correct about truth values along this leftmost path.  Equivalently, $WF'_{n+1}(t)$ holds if either some $s\prec^{+1}t$ belongs to the leftmost path through $\mathcal{T}_n(T)$, or if no extension of $\pi^{n+1}_n(t)$ which is compatible with $t$ belongs to such a path.

\begin{lemma}[\ACA]
If $WF_n(s)$ and $s\subseteq t$ then $WF_n(t)$.
\end{lemma}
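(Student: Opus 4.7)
This is essentially a monotonicity observation that falls straight out of unpacking the definition of $WF_n$. The plan is simply to note that $WF_n(s)$ is defined as a universal statement over all extensions $s'\supseteq s$ in $\mathcal{T}_n$, so enlarging $s$ only shrinks the set of extensions that must be checked and therefore weakens the condition.

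In more detail, suppose $WF_n(s)$ holds and $s\subseteq t$, where $s,t\in\mathcal{T}_n(T)$. To verify $WF_n(t)$, let $u\in\mathcal{T}_n(T)$ with $u\supseteq t$ be arbitrary; we must show $WF'_n(u)$. By transitivity of $\supseteq$ we have $u\supseteq s$, so the hypothesis $WF_n(s)$ directly yields $WF'_n(u)$. Since $u$ was arbitrary, $WF_n(t)$ follows.

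There is no real obstacle here: the proof goes through uniformly for $n=0$ and $n\geq 1$ because the definition of $WF_n$ in both cases quantifies universally over supersets in $\mathcal{T}_n$. No use is made of the internal structure of $WF'_n$ or of $\prec^{+1}$, and in particular no transfinite induction is required; the argument is carried out in \ACA{} (indeed in pure logic modulo the existence of $\mathcal{T}_n(T)$).
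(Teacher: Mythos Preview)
Your proposal is correct and is exactly the monotonicity observation the paper has in mind; the paper's own proof is simply ``Immediate, since the definition is monotonic.'' Your argument is just a spelled-out version of that one line.
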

\begin{proof} 
Immediate, since the definition is monotonic.
\end{proof}

\begin{lemma}[\ACA]
If $WF_n(\pi^{n+1}_n(t))$ then $WF_{n+1}(t)$.
\label{wf_increment}
\end{lemma}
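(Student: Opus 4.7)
The plan is to unfold the definitions and observe that the conclusion reduces to monotonicity. Unfolding $WF_{n+1}(t)$, I need to show that for every $s \supseteq t$ in $\mathcal{T}_{n+1}(T)$, $WF'_{n+1}(s)$ holds. So I fix an arbitrary such $s$.

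First I would observe that $\pi^{n+1}_n$ is monotone with respect to inclusion, since $\pi^{n+1}_n(s) = \{\phi \in s \mid rk(\phi) \leq n\}$; hence $s \supseteq t$ gives $\pi^{n+1}_n(s) \supseteq \pi^{n+1}_n(t)$. Combining this with the hypothesis $WF_n(\pi^{n+1}_n(t))$ and the preceding monotonicity lemma for $WF_n$, I get $WF_n(\pi^{n+1}_n(s))$.

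Now I unfold $WF'_{n+1}(s)$: it asks that under the assumption ``$WF_n(u)$ holds for all $u \supseteq \pi^{n+1}_n(s)$ with $u \prec^{+1} s$'', one also has $WF_n(u)$ for \emph{all} $u \supseteq \pi^{n+1}_n(s)$. But the conclusion of this implication already follows without using the hypothesis at all: by the definition of $WF_n(\pi^{n+1}_n(s))$, every $u \supseteq \pi^{n+1}_n(s)$ satisfies $WF'_n(u)$ for all further extensions, which is exactly $WF_n(u)$ (using the preceding lemma to propagate from $\pi^{n+1}_n(s)$ to $u$). Therefore the implication defining $WF'_{n+1}(s)$ holds trivially.

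There is no real obstacle here; the content of the lemma is essentially bookkeeping, confirming that the tower of $WF_n$ predicates is set up so that well-foundedness at level $n$ on the projection forces well-foundedness at level $n+1$ on the original node. The only point that requires any care is keeping straight which direction the projection $\pi^{n+1}_n$ moves under $\supseteq$, and that the definition of $WF_n$ already closes under supersets, so the hypothesis of the implication in $WF'_{n+1}(s)$ is vacuous in this situation rather than needing to be verified.
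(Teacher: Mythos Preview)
Your proof is correct and follows exactly the same route as the paper's: from $WF_n(\pi^{n+1}_n(t))$ and monotonicity one gets $WF_n(u)$ for every $u\supseteq\pi^{n+1}_n(t)$, which directly gives the conclusion of $WF'_{n+1}(s)$ for every $s\supseteq t$ without using the premise of that implication. The paper simply compresses this into two sentences.
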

\begin{proof} 
Assuming $WF_n(\pi^{n+1}_n(t))$, for every $s\supseteq \pi^{n+1}_n(t)$, $WF_n(s)$.  This implies $WF_{n+1}(t)$.
\end{proof}

\begin{lemma}[\SPiti{1}]
Let $\phi$ be a basic rank $0$ formula, let $s\in\mathcal{T}_0(T)$, and suppose that for every $t\supseteq s$ such that $t$ decides $\phi$, $WF_0(t)$.  Then $WF_0(s)$.
\label{decides_rank_0}
\end{lemma}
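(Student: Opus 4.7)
The plan is to unfold $WF_0(s)$ as a quantification over extensions of $\sigma_s$ in $T$, observe that the hypothesis is exactly this quantification restricted to ``long'' extensions, and bridge the gap by combining a finite backward induction on length with an inner $\prec$-induction on the next coordinate.

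Since $\mathcal{T}_0(T)$ consists of sets of formulas of the form $F(i)=j$, unpacking $WF_0$ shows $WF_0(s)$ is equivalent to the assertion $(\dagger)$: for every $\sigma\sqsupseteq\sigma_s$ with $\sigma\in T$, if $T_\tau$ is well-founded for every $\tau\prec\sigma$, then $T_\sigma$ is well-founded. Writing $\phi$ as $F(i)=j$, the extensions $t\supseteq s$ deciding $\phi$ correspond to $\sigma_t\sqsupseteq\sigma_s$ in $T$ with $|\sigma_t|\geq i+1$, so the hypothesis is exactly $(\dagger)$ restricted to sequences of length $\geq i+1$. It therefore suffices to extend this restricted statement down to all $\sigma\sqsupseteq\sigma_s$, which I do by finite backward induction on $|\sigma|$ from $i$ down to $|\sigma_s|$.

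Fix $\sigma\sqsupseteq\sigma_s$ in $T$ with $|\sigma|\leq i$ and assume (inductively) that $(\dagger)$ holds for every sequence of length $|\sigma|+1$. Under the hypothesis of $(\dagger)$ at $\sigma$, namely that $T_\tau$ is well-founded for every $\tau\prec\sigma$, I must show $T_\sigma$ is well-founded. For this I run an inner $\prec$-induction on $j'$; the formula ``$T_{\sigma^\frown\langle j'\rangle}$ is well-founded'' is $\Pi^1_1$, hence $\Pi_1(\Pi^1_1)$, so the induction is available in \SPiti{1}. If $\sigma^\frown\langle j'\rangle\notin T$ the claim is trivial; otherwise I apply $(\dagger)$ to $\sigma^\frown\langle j'\rangle$ and verify its hypothesis by a case analysis on $\tau\prec\sigma^\frown\langle j'\rangle$. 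Either the common prefix of $\tau$ and $\sigma^\frown\langle j'\rangle$ has length strictly less than $|\sigma|$, in which case $\tau\prec\sigma$ already and $T_\tau$ is well-founded by the outer assumption, or the common prefix is $\sigma$ itself and $\tau$ extends some $\sigma^\frown\langle j''\rangle$ with $j''\prec j'$, whence $T_\tau\subseteq T_{\sigma^\frown\langle j''\rangle}$ is well-founded by the inner induction hypothesis.

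Having obtained $T_{\sigma^\frown\langle j'\rangle}$ well-founded for every $j'$, any path through $T_\sigma$ would pass through $\sigma^\frown\langle j^*\rangle$ for its value $j^*$ at position $|\sigma|$, contradicting well-foundedness of the subtree there. So $T_\sigma$ is well-founded and the backward induction closes. The main point requiring care is the two-case analysis for $\tau\prec\sigma^\frown\langle j'\rangle$: this is what ensures the ``new'' leftward cone opened at position $|\sigma|$ is matched exactly with the inner induction hypothesis, while the old leftward cone of $\sigma$ is controlled by the outer assumption. Once this observation is in place, all formulas being inducted on are $\Pi^1_1$ and fit comfortably within the $\Pi_1(\Pi^1_1)$-TI scheme of \SPiti{1}.
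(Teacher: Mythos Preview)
Your proof is correct and follows essentially the same approach as the paper: an outer finite backward induction on the length of $\sigma$ (the paper parametrizes by $r=i+1-|\sigma_t|$) together with an inner transfinite induction along $\prec$ on the next coordinate, using the same two-case analysis for $\tau\prec\sigma^\frown\langle j'\rangle$. Your reformulation of $WF_0$ as the sequence-level property $(\dagger)$ is just the observation the paper makes after the definition that $WF'_0$ and $WF_0$ coincide, together with the identification of $\mathcal{T}_0(T)$ with finite sequences in $T$.
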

\begin{proof} 
$\phi$ has the form $F(i)=j$ for some $j$.  By main induction on $r$, we show that
\begin{quote} 
Whenever $t\supseteq s$ with $|\sigma_{t}|=i+1-r$, $WF_0(t)$.
\end{quote}
If $r=0$, any such $t$ decides $F(i)=j$, and therefore by assumption, $WF_0(t)$.

Suppose the claim holds for $r$ and let $t\supseteq s$ be given with $|\sigma_t|=i+1-(r+1)=i-r$.  If there is a $\tau\prec\sigma_t$ such that $T_\tau$ is ill-founded then we immediately have $WF_0(t)$.  So assume that for every $\tau\prec\sigma_t$, $T_\tau$ is well-founded.  For each $k$, let $t_k=t\cup\{F(|\sigma_{t}|)=k\}$.  By side induction on $k$ along $\prec$, we will show that $T_{\sigma_{t_k}}$ is well-founded.  Suppose that for all $k'\prec k$ with $t_{k'}\in\mathcal{T}_0(T)$, $T_{\sigma_{t_{k'}}}$ is well-founded.  Since $|\sigma_{t_k}|=|\sigma_t|+1=i+1-r$, we have $WF_0(t_k)$.  If $\tau\prec\sigma_{t_k}$ and $\tau\in T$, we either have $\tau\prec\sigma_t$, in which case we have assumed $T_\tau$ is well-founded, or $\tau=\sigma_t^\frown\langle k'\rangle=\sigma_{t_{k'}}$ for some $k'\prec k$, in which case we have that $T_\tau$ is well-founded by side IH.  Therefore, by $WF_0(t_k)$, $T_{\sigma_{t_k}}$ is well-founded.  Since $T_{\sigma_{t_k}}=T_{\sigma_t^\frown\langle k\rangle}$ is well-founded whenever $\sigma_t^\frown\langle k\rangle\in T$, it follows that $T_{\sigma_t}$ is well-founded, as desired.

Since $|\sigma_s|=i+1-r$ for some $r$, the statement holds in particular for $s$.
\end{proof}

\begin{lemma}[\ACA]
Let $\phi$ be a basic rank $n+1$ formula, let $s\in\mathcal{T}_{n+1}(T)$, and suppose that for every $t\supseteq s$ such that $t$ decides $\phi$, $WF_{n+1}(t)$.  Then $WF_{n+1}(s)$.
\label{decides_rank_successor}
\end{lemma}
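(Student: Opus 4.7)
The approach adapts the strategy of Lemma \ref{decides_rank_0} to basic formulas of rank $n+1$. A basic rank-$(n+1)$ formula $\phi$ has the form $\exists x\psi(x)$ or $\forall x\psi(x)$ with $\psi$ of rank $n$, and the two cases are formally dual, so I fix $\phi=\exists x\psi$.

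To prove $WF_{n+1}(s)$, I pick an arbitrary $s' \supseteq s$ in $\mathcal{T}_{n+1}(T)$ and show $WF'_{n+1}(s')$. If $s'$ already decides $\phi$ (by containing $\phi$ or $\forall x\neg\psi$), then $s'$ itself extends $s$ and decides $\phi$, so the hypothesis gives $WF_{n+1}(s')$, hence $WF'_{n+1}(s')$. Otherwise, assume the premise of $WF'_{n+1}(s')$---that $WF_n(u)$ holds for every $u \supseteq \pi^{n+1}_n(s')$ with $u \prec^{+1} s'$---and consider $t^- := s' \cup \{\forall x\neg\psi\}$. When $\widehat{t^-}$ is consistent, $t^- \in \mathcal{T}_{n+1}(T)$ decides $\phi$, the hypothesis yields $WF_{n+1}(t^-)$, and so $WF'_{n+1}(t^-)$ holds. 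Since $\pi^{n+1}_n(t^-) = \pi^{n+1}_n(s')$ and the only $\prec^{+1}$-predecessors of $t^-$ beyond those of $s'$ are the $u$ with $\hat u \vdash \phi$, the task reduces to proving $WF_n(u)$ whenever $u \supseteq \pi^{n+1}_n(s')$ and $\hat u \vdash \phi$.

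For such a $u$, I select a rank-$n$ witness $\psi(i)$ that is consistent with $\hat u$ and form $t^+_i := s' \cup \{\phi, \psi(i)\}$. When $\widehat{t^+_i}$ is consistent, $t^+_i \in \mathcal{T}_{n+1}(T)$ decides $\phi$, and the hypothesis supplies $WF_{n+1}(t^+_i)$; applying the resulting $WF'_{n+1}(t^+_i)$ at extensions of $\pi^{n+1}_n(t^+_i) = \pi^{n+1}_n(s') \cup \{\psi(i)\}$, together with a second appeal to the $\prec^{+1}$-premise of $WF'_{n+1}(s')$, yields $WF_n(u)$. The remaining subcase---$\widehat{t^-}$ inconsistent---reduces directly to the witness branch applied to $s'$ itself, since then $\hat{s'} \vdash \phi$.

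The main obstacle is the bookkeeping around the existential witness condition in the definition of $\mathcal{T}_{n+1}(T)$. One must argue that a rank-$n$ witness $\psi(i)$ can always be chosen consistently with $\hat{s'}$ in the relevant branch---a finitary first-order fact about $\hat{s'}$ and $\hat T$ that stays within \ACA{}---and simultaneously verify that the $\pi^{n+1}_n$-projections of $t^-$ and $t^+_i$ align with that of $s'$ (modulo the added $\psi(i)$) so that the two applications of the hypothesis chain together without introducing uncontrolled $\prec^{+1}$-predecessors. The final descent from $WF_n$ at $u\cup\{\psi(i)\}$ down to $WF_n(u)$ should not rely on a false converse to the monotonicity lemma, but rather on unfolding $WF_n(u)$ to quantify over its own extensions $v\supseteq u$ and running the analysis at $v\cup\{\psi(i)\}$.
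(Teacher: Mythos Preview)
Your overall architecture matches the paper's: reduce to showing $WF'_{n+1}(s')$, split on whether $s'$ decides $\phi$, add the universal $\forall x\neg\psi$ to form $t^-$, and then deal with the new $\prec^{+1}$-predecessors $u$ (those with $\hat u\vdash\exists x\psi$) by passing to an extension of $s'$ that contains $\phi$.

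The gap is in how you build that existential extension. You set $t^+_i:=s'\cup\{\phi,\psi(i)\}$ for a single witness $\psi(i)$ chosen consistent with $\hat u$. Two problems arise. First, $\psi$ is only a rank-$n$ formula, not a \emph{basic} one, so $\psi(i)$ need not be admissible as an element of a node of $\mathcal{T}_{n+1}(T)$. Second, and more seriously, even when $\psi(i)$ is basic you get $\pi^{n+1}_n(t^+_i)=\pi^{n+1}_n(s')\cup\{\psi(i)\}$, which is in general incomparable with $u$. Applying $WF'_{n+1}(t^+_i)$ therefore yields $WF_n(w)$ only for $w\supseteq \pi^{n+1}_n(s')\cup\{\psi(i)\}$, and $u$ is not such a $w$. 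Your proposed ``unfolding'' does not close this: for each $v\supseteq u$ you would obtain $WF_n(v\cup\{\psi(i)\})$, but that gives $WF'_n(w)$ only for $w\supseteq v\cup\{\psi(i)\}$, never $WF'_n(v)$ itself. Monotonicity runs the wrong way, and there is no independent lemma letting you drop $\psi(i)$.

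The paper's move is to fold the entire predecessor $u$ into the extension: set $s_+:=u\cup s'\cup\{\phi\}$. Since $u\in\mathcal{T}_n(T)$ already consists of basic rank-$\leq n$ formulas and (in the relevant case) $u\not\prec^{+1}s'$, the earlier compatibility lemma gives $u\cup s'\in\mathcal{T}_{n+1}(T)$, and adding the existential $\phi$ keeps consistency because $\hat u\vdash\phi$. Now $\pi^{n+1}_n(s_+)=u$ on the nose, the only rank-$(n+1)$ universal formulas in $s_+$ are those of $s'$, so $w\prec^{+1}s_+$ implies $w\prec^{+1}s'$, and the conclusion of $WF'_{n+1}(s_+)$ delivers $WF_n(u)$ directly, with no descent needed. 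Replacing your $t^+_i$ by this $s_+$ fixes the argument.
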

\begin{proof}
Without loss of generality, we may assume $\phi$ is the formula $\exists x\psi$.  It suffices to show that whenever the assumption holds of $s$, $WF'_{n+1}(s)$.  If $s$ decides $\phi$ we have $WF_{n+1}(s)$ by assumption, so assume $s$ does not decide $\phi$.  Assume $s$ satisfies the premise of $WF'_{n+1}(s)$: whenever $t\supseteq\pi^{n+1}_n(s)$ and $t\prec^{+1}s$, $WF_n(t)$.

First, consider any $s_+\supseteq s$ such that $(s_+\setminus s)\cap\mathcal{F}_{n+1}=\{\phi\}$, so $s_+$ decides $\phi$.  Suppose $t\supseteq\pi^{n+1}_n(s_+)$ and $t\prec^{+1}s_+$.  Then there is a formula $\forall x\psi'\in s_+$ and a $k$ such that $\hat t\vdash\neg\psi'(k)$.  We must have $\forall x\psi'\in s$ and therefore $t\prec^{+1}s$, so $WF_n(t)$.  Since $WF_{n+1}(s_+)$ holds, it follows that whenever $t\supseteq \pi^{n+1}_n(s_+)$, $WF_n(t)$.

Now let $s_-=s\cup\{\forall x\neg\psi\}$ and suppose $t\supseteq\pi^{n+1}_n(s_-)$ and $t\prec^{+1}s_-$.  As before, there is a formula $\forall x\psi'\in s_-$ and a $k$ such that $t\vdash\psi'(k)$.  If $\psi'\neq\neg\psi$, again we have $WF_n(t)$ since $t\prec^{+1}s$.  Otherwise, set $s_+=t\cup s\cup\{\phi\}$; then $\pi^{n+1}_n(s_+)=t$, and therefore $WF_n(t)$ by the preceding paragraph.  So for any $t\supseteq\pi^{n+1}_n(s_-)$ with $t\prec^{+1}s_-$, $WF_n(t)$.  Since $s_-$ decides $\phi$, we have $WF_{n+1}(s_-)$, and therefore for all $t\supseteq\pi^{n+1}_n(s_-)$, $WF_n(t)$.  Since $\pi^{n+1}_n(s_-)=\pi^{n+1}_n(s)$, it follows that whenever $t\supseteq \pi^{n+1}_n(s)$, $WF_n(t)$, and therefore $WF'_{n+1}(s)$.
\end{proof}

We wish the previous lemma to hold even when $rk(\phi)<n$.  To do this we prove the following inductive step.
\begin{lemma}[\ACA]
Let $\phi$ be a basic rank $m$ formula, let $n\geq m$, and suppose that:
\begin{quote} 
Whenever $s\in\mathcal{T}_n(T)$ and for every $t\supseteq s$ such that $t$ decides $\phi$, $WF_n(t)$, then $WF_n(s)$.
\end{quote}
Then:
\begin{quote} 
Whenever $s\in\mathcal{T}_{n+1}(T)$ and for every $t\supseteq s$ such that $t$ decides $\phi$, $WF_{n+1}(t)$, then $WF_{n+1}(s)$.
\end{quote}
\label{decide_rank_increment}
\end{lemma}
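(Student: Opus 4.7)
The plan is to reduce the claim to showing $WF'_{n+1}(s)$ for a single $s \in \mathcal{T}_{n+1}(T)$ satisfying the stated hypothesis (every $t \supseteq s$ that decides $\phi$ has $WF_{n+1}(t)$). Since this hypothesis is preserved under passing to further extensions $s' \supseteq s$, establishing the $WF'_{n+1}$ version for each such $s'$ will yield $WF_{n+1}(s)$ by definition. So I would fix such an $s$, assume the premise of $WF'_{n+1}(s)$ — that $WF_n(u)$ holds for every $u \supseteq \pi^{n+1}_n(s)$ in $\mathcal{T}_n(T)$ with $u \prec^{+1} s$ — and fix an arbitrary $u \supseteq \pi^{n+1}_n(s)$; the task becomes proving $WF_n(u)$.

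At this point I would apply the lemma's own assumption at level $n$: it suffices to exhibit, for every $v \supseteq u$ in $\mathcal{T}_n(T)$ that decides $\phi$, a proof of $WF_n(v)$. Such a $v$ splits into two cases. If $v \prec^{+1} s$, then $WF_n(v)$ is immediate from the premise already assumed. Otherwise the earlier consistency lemma (stating that $t \supseteq \pi^{n+1}_n(s)$ with $t \not\prec^{+1} s$ forces $t \cup s \in \mathcal{T}_{n+1}(T)$) places $v \cup s$ in $\mathcal{T}_{n+1}(T)$. Since $v$ contains only formulas of rank $\leq n$, we have $\pi^{n+1}_n(v \cup s) = v$; and since $v$ decides $\phi$ so does $v \cup s$. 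The assumed hypothesis at level $n+1$ therefore delivers $WF_{n+1}(v \cup s)$, and in particular $WF'_{n+1}(v \cup s)$.

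The final step is to cash in $WF'_{n+1}(v \cup s)$ as $WF_n(v)$. Its premise demands $WF_n(t)$ for every $t \supseteq v$ with $t \prec^{+1} v \cup s$; any such $t$ refutes some $\forall x \chi \in v \cup s$ of rank $n+1$, and this quantifier must lie in $s$ because $v$ has no rank-$(n+1)$ formulas, so $t \prec^{+1} s$. Combined with $t \supseteq v \supseteq \pi^{n+1}_n(s)$, the originally assumed premise of $WF'_{n+1}(s)$ gives $WF_n(t)$, meeting the hypothesis of $WF'_{n+1}(v \cup s)$; its conclusion then supplies $WF_n(t)$ for all $t \supseteq v$, which in particular includes $WF_n(v)$. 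The main obstacle is this bookkeeping between $\prec^{+1} s$ and $\prec^{+1} (v \cup s)$ together with the correct invocation of the consistency lemma to ensure $v \cup s$ is a legal node of $\mathcal{T}_{n+1}(T)$; once those are pinned down, the inductive step on $n$ composes essentially mechanically.
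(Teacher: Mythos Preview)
Your proposal is correct and follows essentially the same approach as the paper: reduce to $WF'_{n+1}(s)$, assume its premise, and for each $u\supseteq\pi^{n+1}_n(s)$ use the level-$n$ hypothesis to reduce to $v\supseteq u$ deciding $\phi$, splitting on whether $v\prec^{+1}s$ and in the other case forming $v\cup s$ via the consistency lemma and unwinding $WF'_{n+1}(v\cup s)$. The only differences are cosmetic (variable names, and you make explicit both the preservation of the hypothesis under extension and the appeal to the consistency lemma, which the paper leaves implicit).
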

\begin{proof} 
Let $s\in\mathcal{T}_{n+1}(T)$ be given, and suppose that for every $t\supseteq s$ such that $t$ decides $\phi$, $WF_{n+1}(t)$.  Again, it suffices to show that $WF'_{n+1}(s)$.  Suppose that whenever $t\supseteq\pi^{n+1}_n(s)$ and $t\prec^{+1}s$, $WF_n(t)$.  Let $t\supseteq\pi^{n+1}_n(s)$ be arbitrary; we will show $WF_n(t)$.  To do this, it suffices to show that whenever $t'\supseteq t$ decides $\phi$, $WF_n(t')$.

So suppose $t'\supseteq t$ is given such that $t'$ decides $\phi$.  If $t'\prec^{+1}s$ then $WF_{n}(t')$ by assumption.  Otherwise, set $s'=t'\cup s$.  Since $s'$ decides $\phi$, $WF_{n+1}(s')$ holds.  Whenever $t''\supseteq\pi^{n+1}_{n}(s')=t'$ with $t''\prec^{+1}s'$, also $t''\prec^{+1}s$, and therefore $WF_{n}(t'')$.  Therefore for any $t''\supseteq \pi^{n+1}_{n}(s')=t'$, $WF_{n}(t'')$, and in particular $WF_{n}(t')$.

\end{proof}

\begin{lemma}[\ACA]
If $WF_{n+1}(\emptyset)$ then $WF_n(\emptyset)$.
\label{wf_empty_decrement}
\end{lemma}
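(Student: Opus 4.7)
The plan is to exploit the fact that every $s \in \mathcal{T}_n(T)$ is automatically an element of $\mathcal{T}_{n+1}(T)$, since being a finite consistent set of closed basic formulas of rank $\leq n$ trivially meets the defining conditions for $\mathcal{T}_{n+1}(T)$. In particular, for such $s$ we have $\pi^{n+1}_n(s) = s$, because $s$ contains no formulas of rank $n+1$. So we may feed any $s \in \mathcal{T}_n(T)$ directly into the hypothesis $WF_{n+1}(\emptyset)$ by viewing it as lying in $\mathcal{T}_{n+1}(T)$.

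The key steps, in order, are: first, fix an arbitrary $s \in \mathcal{T}_n(T)$ and apply $WF_{n+1}(\emptyset)$ to obtain $WF'_{n+1}(s)$. Second, observe that the premise of $WF'_{n+1}(s)$ — namely, that $WF_n(t)$ holds for every $t \supseteq \pi^{n+1}_n(s) = s$ with $t \prec^{+1} s$ — is vacuously true, because $t \prec^{+1} s$ demands that $s$ contain a universal formula of the top rank, of which $s$ has none (it has no rank $n+1$ formulas at all). Third, conclude that for every $t \supseteq s$ in $\mathcal{T}_n(T)$, $WF_n(t)$ holds; taking $t = s$ and unpacking one further layer with $t' = s$ gives $WF'_n(s)$. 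Since $s$ was arbitrary, this yields $WF_n(\emptyset)$.

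The main obstacle, if there is one, is simply recognizing that the rank bookkeeping makes the premise of $WF'_{n+1}$ vacuous on these inputs — once spotted, the argument is pure definition chasing, and it only requires $\mathbf{ACA_0}$ because no transfinite induction is invoked (contrast with Lemma \ref{decides_rank_0}). This asymmetry with Lemma \ref{wf_increment}, which moves the ranks in the opposite direction, is also purely formal: there, $WF_n(\pi^{n+1}_n(t)) \Rightarrow WF_{n+1}(t)$ is immediate from the structure of the definitions; here, the contrapositive use of $WF_{n+1}(\emptyset)$ together with the absent top-rank formulas in $\mathcal{T}_n$ does the work.
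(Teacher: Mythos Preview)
Your proof is correct and uses the same key idea as the paper: the premise of $WF'_{n+1}$ is vacuous when the input contains no rank-$(n{+}1)$ formulas, so the conclusion yields $WF_n$ on all extensions. The paper's version is slightly more economical---it applies $WF'_{n+1}$ only at $\emptyset$ itself (where the premise is vacuous since $\emptyset$ contains no formulas at all), immediately obtaining $WF_n(t)$ for every $t\in\mathcal{T}_n(T)$ and hence $WF_n(\emptyset)$; your detour through arbitrary $s\in\mathcal{T}_n(T)$ and the extra unpacking to $WF'_n(s)$ is unnecessary but harmless.
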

\begin{proof} 
If $WF_{n+1}(\emptyset)$ then, in particular, $WF'_{n+1}(\emptyset)$.  If $t\supseteq\pi^{n+1}_n(\emptyset)$ then we cannot have $t\prec^{+1}\emptyset$, so the premise of $WF'_{n+1}(\emptyset)$ is trivially satisfied, and therefore whenever $t\in \mathcal{T}_n(T)$, $t\supseteq\pi^{n+1}_n(\emptyset)=\emptyset$, so $WF_n(t)$.  In particular, $WF_n(\emptyset)$.
\end{proof}

\begin{definition} 
Given $s\in\mathcal{T}_n(T)$ and a formula $\phi(x,y)$ with only the displayed free variables, we define a sequence $\sigma_{s,\phi}$ recursively: $\emptyset\subseteq\sigma_{s,\phi}$, and if $\tau\sqsubseteq\sigma_{s,\phi}$ and there is exactly one $i$ such that $\hat s\vdash\phi(|\tau|,i)$ then $\tau^\frown\langle i\rangle\sqsubseteq\sigma_{s,\phi}$. 
\end{definition}

\begin{definition} 
Let $n$ be a successor\footnote{By a successor, we mean $n>0$.  In the remainder of this section, we will refer to numbers $>0$ as ``successors'' in definitions or theorems which will apply unchanged when we generalize to infinite well orderings.}.  We define $\widehat{\mathcal{T}_{n}(T)}$ to consist of pairs $(s,U)$ such that:
\begin{itemize} 
  \item $s\in\mathcal{T}_n(T)$
  \item $U$ is a partial functions whose domain is a finite set of basic formulas of rank $\leq n$ of the form $\exists z\phi(x,y,z)$ with only the displayed free variables such that $\sigma_{s,\phi}\prec \sigma_s$, and whose range is $\{0,1\}$
  \item If $U(\phi)$ is defined then $U(\phi)=1$ iff one of the following \emph{excluding conditions} holds:
     \begin{itemize} 
       \item There are $m,i,j$ with $i\neq j$ such that $\exists z\phi(m,i,z)\in s$ and $\exists z\phi(m,j,z)\in s$,
       \item There is an $m$ such that $\forall u\neg\phi(m,p_1(u),p_2(u))\in s$, or
       \item $\sigma_{s,\phi}\not\in T$.
     \end{itemize}
\end{itemize}

We say $(s,U)$ \emph{decides} $U(\phi)$ if $U(\phi)$ is defined.  We say $(s,U)\supseteq(t,V)$ if $s\supseteq t$, $\dom(U)\supseteq\dom(V)$, and $U\upharpoonright\dom(V)=V$.

We define $\hat\pi:\widehat{\mathcal{T}_{n}(T)}\rightarrow\mathcal{T}_n(T)$ by $\hat\pi(s,U)=s$.

If $t\in\mathcal{T}_n(T)$ and $(s,U)\in\widehat{\mathcal{T}_{n}(T)}$, we say $t\prec^{+1}(s,U)$ if there is a $\phi$ such that $U(\phi)=0$ but $t$ satisfies one of the above excluding conditions for $\phi$.

$\widehat{WF}'_{n}(s,U)$ holds if
    \begin{quote}
    Suppose that for all $t\supseteq s$ such that $t\prec^{+1}(s,U)$, $WF_n(t)$; then for all $t\supseteq\hat\pi(s,U)$, $WF_n(t)$.
    \end{quote}
$\widehat{WF}_{n}(t,V)$ holds if for all $(s,U)\supseteq (t,V)$, $\widehat{WF}'_{n}(s,U)$ holds.
\end{definition}
These definitions are very similar to the $n+1$ cases above; in place of existential formulas, we have ``witnesses that $\phi$ fails to define a path to the left of the official path''.

As above, we have
\begin{lemma}[\ACA]
If $n$ is a successor,
\begin{enumerate} 
  \item If $\widehat{WF}_{n}(t,V)$ holds and $(s,U)\supseteq (t,V)$ then $\widehat{WF}_{n}(s,U)$ holds.
  \item If $WF_n(s)$ holds then $\widehat{WF}_{n}(s,U)$ holds.
  \item If $\widehat{WF}_{n}(\emptyset,\emptyset)$ then $WF_n(\emptyset)$.
\end{enumerate}
\end{lemma}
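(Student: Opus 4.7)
The plan is to prove all three parts directly from the definitions; none of them requires genuine induction or substantial technical work, and each reduces either to monotonicity under $\supseteq$ or to a trivial observation about the empty function $U$.

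For part (1), I would simply chase definitions. The statement $\widehat{WF}_n(t,V)$ unfolds to: for every $(s',U')\supseteq(t,V)$, $\widehat{WF}'_n(s',U')$ holds. If $(s,U)\supseteq(t,V)$, then by transitivity of $\supseteq$ every $(s',U')\supseteq(s,U)$ also extends $(t,V)$, so the assertion transfers immediately and yields $\widehat{WF}_n(s,U)$.

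For part (2), the plan is to fix an arbitrary $(s',U')\supseteq(s,U)$ and show that the conclusion of $\widehat{WF}'_n(s',U')$ holds outright, rendering its conditional vacuously satisfied. Since $s'\supseteq s$, the earlier monotonicity lemma for $WF_n$ gives $WF_n(s')$; unfolding $WF_n(s')$ and applying the same monotonicity once more shows that $WF_n(t)$ holds for every $t\supseteq s'=\hat\pi(s',U')$, which is exactly the conclusion demanded by $\widehat{WF}'_n(s',U')$.

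For part (3), the key observation is that $\dom(\emptyset)=\emptyset$, so no formula $\phi$ has $U(\phi)=0$ when $U=\emptyset$; consequently no $t\in\mathcal{T}_n(T)$ can satisfy $t\prec^{+1}(\emptyset,\emptyset)$, and the premise of $\widehat{WF}'_n(\emptyset,\emptyset)$ is vacuously true. Since $\widehat{WF}_n(\emptyset,\emptyset)$ supplies $\widehat{WF}'_n(\emptyset,\emptyset)$, its conclusion yields $WF_n(t)$ for every $t\supseteq\emptyset$ in $\mathcal{T}_n(T)$, and in particular $WF_n(\emptyset)$.

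I anticipate no real obstacle: the work is entirely in correctly reading off the quantifier structure of $\widehat{WF}_n$, $\widehat{WF}'_n$, and the relation $\prec^{+1}$ as defined on $\widehat{\mathcal{T}_n(T)}$, and in noticing for (3) that an empty set of decided witnesses forces the premise of $\widehat{WF}'_n$ to be vacuous. The hypothesis that $n$ be a successor is used only implicitly, since $\widehat{\mathcal{T}_n(T)}$ was defined only for successor $n$; no appeal to the earlier technical lemmas about deciding formulas is needed at this step.
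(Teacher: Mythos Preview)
Your proposal is correct and matches the paper's approach exactly. The paper does not even write out a proof for this lemma, stating only ``As above, we have'' to indicate that parts (1)--(3) are direct analogues of the earlier monotonicity lemma for $WF_n$, Lemma~\ref{wf_increment}, and Lemma~\ref{wf_empty_decrement} respectively; your unpacking of the definitions reproduces precisely those arguments.
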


\begin{lemma}[\ACA]
Let $n$ be a successor.  Let $\phi$ be a formula and suppose that whenever $(t,V)\supseteq(s,U)$ and $(t,V)$ decides $U(\phi)$, $\widehat{WF}_{n}(t,V)$.  Then $\widehat{WF}_{n}(s,U)$.
\end{lemma}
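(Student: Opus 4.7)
The plan is to mirror the proof of Lemma \ref{decides_rank_successor}, replacing the finite set $s$ with the pair $(s,U)$ and the existential-versus-universal split with a value-1-versus-value-0 split for $U$. I would first reduce to showing $\widehat{WF}'_{n}(s',U')$ for an arbitrary $(s',U')\supseteq(s,U)$, which then yields $\widehat{WF}_{n}(s,U)$. If $(s',U')$ already decides $U(\phi)$, apply the hypothesis directly.

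Otherwise $\phi\notin\dom U'$, so I would consider the two candidate ways to put $\phi$ into the domain. Since the value of $U(\phi)$ is forced by whether an excluding condition for $\phi$ holds in $s'$, exactly one of $(s',U'\cup\{\phi\mapsto 1\})$ and $(s',U'\cup\{\phi\mapsto 0\})$ lies in $\widehat{\mathcal{T}_{n}(T)}$. When the value-1 extension is the legal one, the relation $\prec^{+1}$ is unchanged, since only formulas assigned value $0$ contribute witnesses to $\prec^{+1}$; hence the conclusion of $\widehat{WF}'_{n}(s',U')$ follows at once from $\widehat{WF}_{n}(s',U'\cup\{\phi\mapsto 1\})$, which the hypothesis supplies.

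The substantive case is when the value-0 extension is legal. Here the premise of $\widehat{WF}'_{n}(s',U'\cup\{\phi\mapsto 0\})$ picks up additional $\prec^{+1}$-witnesses: any $t\supseteq s'$ satisfying an excluding condition for $\phi$ is now included, so I must verify $WF_{n}(t)$ for all such $t$. The key move is to observe that for any such $t$, the pair $(t,U'\cup\{\phi\mapsto 1\})$ is a legal extension of $(s,U)$ that decides $U(\phi)$. By hypothesis, $\widehat{WF}_{n}(t,U'\cup\{\phi\mapsto 1\})$ holds; and because its $\prec^{+1}$-premise coincides with the premise of $\widehat{WF}'_{n}(s',U')$ that we have already assumed (again, adding a value-$1$ formula creates no new witnesses), $\widehat{WF}'_{n}(t,U'\cup\{\phi\mapsto 1\})$ delivers $WF_{n}(t)$. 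This verifies the premise of $\widehat{WF}'_{n}(s',U'\cup\{\phi\mapsto 0\})$, whose conclusion then gives what we want.

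The main obstacle is the bookkeeping around $\prec^{+1}$: one must check carefully that adding a value-$1$ formula to $U$ genuinely creates no new $\prec^{+1}$-witnesses, and verify that all auxiliary pairs are in $\widehat{\mathcal{T}_{n}(T)}$, in particular that the side condition $\sigma_{s',\phi}\prec\sigma_{s'}$ can be arranged by extending $s'$ first when it initially fails. Aside from this technicality, the argument is a direct translation of the proof of Lemma \ref{decides_rank_successor}.
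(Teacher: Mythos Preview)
Your proposal is correct and follows essentially the same approach as the paper: reduce to showing $\widehat{WF}'_n$, handle the case where the value-$1$ extension is legal directly (since adding a value-$1$ entry leaves $\prec^{+1}$ unchanged), and in the value-$0$ case discharge the new $\prec^{+1}$-witnesses $t$ by passing to $(t,U'\cup\{\phi\mapsto 1\})$ and invoking the hypothesis. The only cosmetic differences are that the paper does not explicitly pass to an arbitrary extension $(s',U')$ (it relies on the hypothesis being hereditary) and that it organizes the case split as ``$s$ satisfies an excluding condition / $s$ does not'' rather than ``value-$1$ legal / value-$0$ legal'', which amounts to the same thing; you are also slightly more careful than the paper in flagging the side condition $\sigma_{s',\phi}\prec\sigma_{s'}$.
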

\begin{proof} 
The proof is similar to that of Lemma \ref{decides_rank_successor}.  It suffices to show that $\widehat{WF}'_{n}(s,U)$.  Suppose the premise of $\widehat{WF}'_{n}(s,U)$ holds, so that whenever $s'\supseteq s$ and $s'\prec^{+1}(s,U)$, $WF_n(s')$.

First, if $s$ satisfies one of the excluding conditions for $\phi$ then $(s,U\cup\{(\phi,1)\})\in\widehat{\mathcal{T}_n(T)}$, and therefore $\widehat{WF}_n(s,U\cup\{(\phi,1)\})$.  Since $\hat\pi(s,U\cup\{(\phi,1)\})=\hat\pi(s,U)$, we have $\widehat{WF}_n(s,U)$.

Otherwise, let $(t,V)$ extend $(s,U)$ such that $V(\phi)=1$ and $\dom(V)\setminus\dom(U)=\{\phi\}$, and let $t'\supseteq t$ with $t'\prec^{+1}(t,V)$.  Then there is a $\psi$ such that $t'$ satisfies one of the excluding conditions for $\psi$ but $V(\psi)=0$.  Therefore $U(\psi)=0$ as well, so $t'\prec^{+1}(s,U)$, and therefore $WF_n(t')$.  Since $\widehat{WF}_{n}(t,V)$ holds, it follows that for all $t'\supseteq\hat\pi(t,V)$, we have $WF_n(t')$.

Now set $V=U\cup\{(\phi,0)\}$, so $(s,V)$ decides $\phi$, and let $t\supseteq s$ with $t\prec^{+1}(s,V)$.  Then there is a $\psi$ such that $t$ satisfies one of the excluding conditions for $\psi$ but $V(\psi)=0$.  If $\psi\neq\phi$ then $t\prec^{+1}(s,U)$, and therefore $WF_n(t)$.  If $\psi=\phi$ then $(t,U\cup\{(\phi,1)\})$ also decides $\phi$, and so we have shown in previous paragraph that again $WF_n(t)$.  Since $\widehat{WF}_{n}(s,V)$, it follows that whenever $t\supseteq s$, $WF_n(s)$, as desired.
\end{proof}

\begin{lemma}[\ACA] 
Let $\phi$ be a formula and suppose that whenever $s\in\mathcal{T}_{n}$ and for every $t\supseteq s$ such that $t$ decides $\phi$, $WF_{n}(t)$, then $WF_{n}(s)$.  Then whenever $(s,U)\in\widehat{\mathcal{T}_n(T)}$ is such that for every $(t,V)\supseteq(s,U)$ such that $(t,V)$ decides $\phi$, $\widehat{WF}_n(t,V)$, then $\widehat{WF}_n(s,U)$.
\end{lemma}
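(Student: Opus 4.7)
The plan is to parallel Lemma \ref{decide_rank_increment}, now lifting the ``$\phi$-deciding implies $WF$'' property from $\mathcal{T}_n(T)$ to $\widehat{\mathcal{T}_n(T)}$ rather than from level $n$ to level $n+1$.

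Fix $(s,U)$ satisfying the hypothesis and pick an arbitrary extension $(s',U')\supseteq(s,U)$ in $\widehat{\mathcal{T}_n(T)}$; it suffices to verify $\widehat{WF}'_n(s',U')$. Assume its inner premise: $WF_n(t)$ holds for every $t\supseteq s'$ with $t\prec^{+1}(s',U')$. Fix any $t\supseteq s'$; to conclude $WF_n(t)$, invoke the lemma's hypothesis at the level of $\mathcal{T}_n$, which reduces the task to showing $WF_n(t')$ for every $t'\supseteq t$ deciding $\phi$.

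So fix such a $t'$ and split into two cases. If $t'\prec^{+1}(s',U')$, then $WF_n(t')$ is immediate from the standing premise. Otherwise $t'\not\prec^{+1}(s',U')$, and one argues that $(t',U')$ is a legitimate element of $\widehat{\mathcal{T}_n(T)}$ extending $(s,U)$ and deciding $\phi$; the hypothesis of the lemma then delivers $\widehat{WF}_n(t',U')$. To extract $WF_n(t')$, apply $\widehat{WF}'_n(t',U')$: any $t''\supseteq t'$ with $t''\prec^{+1}(t',U')$ automatically satisfies $t''\prec^{+1}(s',U')$ because the $U$-component is unchanged, so $WF_n(t'')$ follows from the standing premise. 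The conclusion of $\widehat{WF}'_n(t',U')$ then yields $WF_n(t'')$ for every $t''\supseteq t'$, and in particular $WF_n(t')$.

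The main technical obstacle is confirming in Case 2 that $(t',U')$ genuinely lies in $\widehat{\mathcal{T}_n(T)}$. The bookkeeping of the recorded $U'$ values is fine: the assignment $U'(\psi)=1$ is preserved by the monotonicity of excluding conditions 1 and 2, and $U'(\psi)=0$ persists precisely because $t'\not\prec^{+1}(s',U')$ forbids a new excluding condition being witnessed. The delicate point is the third excluding condition together with the domain constraint $\sigma_{t',\psi}\prec\sigma_{t'}$: extending $s'$ to $t'$ may change $\sigma_{t',\psi}$ by losing uniqueness of witnesses to $\psi$, and confirming that the definition of $\prec^{+1}$ already absorbs this collapse is the essential bit of content beyond the routine case split sketched above.
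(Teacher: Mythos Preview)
Your proof is essentially the paper's proof: the same case split on whether $t'\prec^{+1}(s',U')$, the same use of $\widehat{WF}_n(t',U')$ obtained from the outer hypothesis, and the same observation that $t''\prec^{+1}(t',U')$ implies $t''\prec^{+1}(s',U')$ because the $U$-component is unchanged. The only cosmetic difference is that you fix an arbitrary extension $(s',U')\supseteq(s,U)$ explicitly, whereas the paper notes that the outer hypothesis is inherited by extensions and therefore it suffices to establish $\widehat{WF}'_n(s,U)$ for the given pair; your final paragraph flags the membership check $(t',U')\in\widehat{\mathcal{T}_n(T)}$, which the paper simply asserts without comment.
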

\begin{proof} 
The proof is similar to that of Lemma \ref{decide_rank_increment}.  It will suffice to show $\widehat{WF}'_{n}(s,U)$.  Suppose the premise of $\widehat{WF}'_{n}(s,U)$ holds, so whenever $t\supseteq s$ and $t\prec^{+1}s$, $WF_n(t)$.

Let $t\supseteq s$ be given.  By assumption, it suffices to show that whenever $t'\supseteq t$ and $t'$ decides $\phi$, $WF_n(t')$.  So let some $t'\supseteq t$ be given such that $t'$ decides $\phi$.  If $t'\prec^{+1}(s,U)$ then $WF_n(t')$.  Otherwise $(t',U)\supseteq (s,U)$ and decides $\phi$, so $\widehat{WF}_{n}(t',U)$.  Moreover, whenever $t''\supseteq t'$ and $t''\prec^{+1}(t',U)$, $t''\prec^{+1}(s,U)$, and therefore $WF_n(t'')$.  So we have $WF_n(t')$, as desired.
\end{proof}

By a \emph{decision of rank $n$}, we mean either a formula $\phi$ of rank $n$, or $U(\phi)$ where $\phi$ is a basic formula of rank $\leq n$ in the form $\exists z\phi(x,y,z)$ with only the displayed free variables.  Note that if $s$ decides $\phi$ or $U(\phi)$ and $t\supseteq s$ then $t$ decides $\phi$ or $U(\phi)$ as well; therefore we may say an infinite path decides $\phi$ or $U(\phi)$ if any finite initial segment does.

The following lemma is a modification of the usual statement that when $\mathcal{S}$ is a well-founded subset of $\widehat{\mathcal{T}_{n}(T)}$ we can carry out transfinite induction along $\mathcal{S}$.
\begin{lemma}[\SPiti{m}]
Let $m$ be a successor.  Let $\mathcal{S}\subseteq\widehat{\mathcal{T}_{n}(T)}$, and suppose that there is no infinite path through $\mathcal{S}$ deciding every decision of rank $n$.  Let $A$ be a formula in $\Pi_m(\Pi^1_1)$ and suppose the following principle holds:
\begin{quote} 
For any $(s,U)\in\mathcal{S}$, if there is a decision $d$ such that whenever $(t,V)\supseteq(s,U)$, $(t,V)\in\mathcal{S}$, and $(t,V)$ decides $d$, $A(t,V)$ holds, then $A(s,U)$ holds.
\end{quote}
Then for every $(s,U)\in\mathcal{S}$, $A(s,U)$ holds.
\label{sigma_2_induction}
\end{lemma}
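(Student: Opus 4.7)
The plan is to reduce the lemma to a transfinite induction on a well-founded auxiliary tree built from $\mathcal{S}$, then apply \SPiti{m} directly.

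First, working in \ACA, I will fix an enumeration $d_0, d_1, \ldots$ of all decisions of rank $n$ and define a tree $\mathcal{V}$ whose nodes are nonempty finite sequences $\sigma = \langle (s_0, U_0), \ldots, (s_k, U_k)\rangle$ with each $(s_i, U_i) \in \mathcal{S}$, and with $(s_{i+1}, U_{i+1}) \supseteq (s_i, U_i)$ and $(s_{i+1}, U_{i+1})$ deciding $d_i$ for each $i < k$. An infinite path through $\mathcal{V}$ would produce an increasing sequence in $\mathcal{S}$ that decides every $d_i$, which is ruled out by the standing hypothesis of the lemma; hence $\mathcal{V}$ is well-founded.

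Next, I will set $B(\sigma) \equiv A(s_k, U_k)$, where $(s_k, U_k)$ is the last element of $\sigma$. Since $A$ is $\Pi_m(\Pi^1_1)$, so is $B$. To verify that $B$ is progressive along $\mathcal{V}$, I will fix a node $\sigma$ ending in $(s_k, U_k)$ and assume $B(\tau)$ for every $\tau \sqsupset \sigma$ in $\mathcal{V}$. The one-step extensions of $\sigma$ in $\mathcal{V}$ are precisely the sequences $\sigma^\frown \langle (s_{k+1}, U_{k+1})\rangle$ with $(s_{k+1}, U_{k+1}) \in \mathcal{S}$ extending $(s_k, U_k)$ and deciding $d_k$, so the induction hypothesis says exactly that $A(s_{k+1}, U_{k+1})$ holds for every such $(s_{k+1}, U_{k+1})$. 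Taking the witnessing decision to be $d := d_k$ in the lemma's hypothesis then delivers $A(s_k, U_k)$, that is, $B(\sigma)$.

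Finally, applying \SPiti{m} to the well-founded tree $\mathcal{V}$ with the $\Pi_m(\Pi^1_1)$ progressive formula $B$ gives $B$ at every node, and evaluating at a singleton node $\langle (s, U)\rangle$ yields $A(s, U)$ for every $(s, U) \in \mathcal{S}$. The main thing to watch is that the existential quantifier over decisions in the lemma's hypothesis is supplied by the depth-indexed choice $d := d_k$, and that the tree $\mathcal{V}$ together with the formula $B$ fall within the scope of \SPiti{m}; both are routine, and I do not expect a substantive obstacle beyond this bookkeeping.
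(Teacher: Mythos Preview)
Your proposal is correct and is essentially the paper's own proof: both fix an enumeration of decisions, build the tree of finite $\supseteq$-increasing sequences from $\mathcal{S}$ in which the element at level $i$ (shifted by one in your indexing) decides the $i$-th decision, observe this tree is well-founded by the hypothesis, and then run $\Pi_m(\Pi^1_1)$ transfinite induction on it with the formula ``$A$ holds of the last entry,'' using the depth-indexed decision as the witness $d$ in the progressiveness step. The only differences are cosmetic (you start from nonempty sequences and shift the decision index by one).
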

Despite the complicated statement, this lemma actually just reformulates transfinite induction in a convenient form.  Transfinite induction is usually stated for trees which are genuinely well-founded; equivalently, there are no paths satisfying a $\Pi_1$ property (namely, having elements at every level).  Here we restrict ourselves to those paths satisfying a $\Pi_2$ property---deciding every decision.  The fact that these are equivalent is essentially a consequence of the well-known fact that statements of the form $\forall X\exists y\forall z\phi$ (for $\phi$ quantifier-free) are equivalent to statements of the form $\forall X\exists y\phi'$.
\begin{proof} 
Fix a surjective function $\rho$ from $\mathbb{N}$ to the set of decisions, and consider the tree $\mathcal{S}'$ of increasing sequences $\sigma$ from $\mathcal{S}$ such that for each $i$, if $\rho(i)$ is a decision of rank $\leq n$ then $\sigma(i)$ decides $\rho(i)$.  Clearly any infinite path through $\mathcal{S}'$ gives an infinite path through $\mathcal{S}$ deciding all formulas, so $\mathcal{S}'$ is well-founded.

Let $A'(\sigma)$ hold if $A(\sigma(|\sigma|-1))$ holds, so $A'$ is a $\Pi_m(\Pi^1_1)$ formula.  We claim $A'$ is progressive: let $\sigma$ be given with $(s,U)$ its final element, and suppose that for all $(t,V)$ such that $\sigma^\frown\langle (t,V)\rangle\in\mathcal{S}'$, $A'(\sigma^\frown\langle (t,V)\rangle)$.  Then whenever $(t,V)\supseteq(s,U)$ and $(t,V)$ decides $\rho(|\sigma|)$, $A'(\sigma^\frown\langle (t,V)\rangle)$, and therefore $A(t,V)$.  Therefore $A(s,U)$, and so $A'(\sigma)$.

So by transfinite induction on $\mathcal{S}'$, $A'$ holds of all $\sigma$, and in particular, $A(s,U)$ for all $(s,U)\in\mathcal{S}$.
\end{proof}

\begin{definition}
\[\widehat{\mathcal{T}_{n}(T)}^+=\{(s,U)\in\widehat{\mathcal{T}_{n}(T)}\mid U(\phi)=1\text{ whenever }U(\phi)\text{ is defined}\}.\]
\end{definition}

The following lemma is stated with premises we have already shown to be true so that it can easily be adapted to the case where $n$ is replaced by an infinite well-ordering later.
\begin{theorem} [\SPiti{n+2}]
Let $n$ be a successor.  Suppose that:
\begin{itemize} 
  \item Whenever $WF_0(\pi^n_0(s))$, $\widehat{WF}_n(s,U)$,
  \item If $\widehat{WF}_n(\emptyset,\emptyset)$ then $WF_0(\emptyset)$,
  \item There is no infinite path through $\widehat{\mathcal{T}_{n}(T)}^+$ deciding every decision. 
\end{itemize}
Then $T$ is well-founded.
\label{arithmetic_well_founded}
\end{theorem}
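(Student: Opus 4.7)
The plan is to apply Lemma~\ref{sigma_2_induction} with $\mathcal{S} = \widehat{\mathcal{T}_n(T)}^+$ and $A(s,U) := \widehat{WF}_n(s,U)$, then use the second hypothesis ($\widehat{WF}_n(\emptyset,\emptyset) \Rightarrow WF_0(\emptyset)$) to conclude well-foundedness of $T$. Since $WF_n$ is $\Pi_{n+1}(\Pi^1_1)$, the formula $\widehat{WF}_n$ is $\Pi_{n+2}(\Pi^1_1)$ (one additional implication plus a numerical universal quantifier), so the lemma applies inside \SPiti{n+2}; $n$ being a successor makes $m = n+2$ a successor as required. The well-foundedness premise of Lemma~\ref{sigma_2_induction} is precisely the third hypothesis.

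The substantive step is verifying progressivity. Given $(s,U) \in \widehat{\mathcal{T}_n(T)}^+$, take as the decision $d$ any formula $\phi$ not yet decided by $s$ (which exists since $s$ is finite). Chaining Lemma~\ref{decides_rank_0}, Lemma~\ref{decides_rank_successor}, and Lemma~\ref{decide_rank_increment} (the last giving an induction on $n$) establishes decidability of $WF_n$ by an arbitrary formula $\phi$; the corresponding lemma lifting this property from $WF_n$ to $\widehat{WF}_n$ then shows that in order to conclude $\widehat{WF}_n(s,U)$ it suffices to produce $\widehat{WF}_n(t,V)$ for every $(t,V) \in \widehat{\mathcal{T}_n(T)}$ extending $(s,U)$ and deciding $\phi$.

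The crucial bridge from the inductive hypothesis (which only provides $\widehat{WF}_n(t,V)$ for $(t,V) \in \widehat{\mathcal{T}_n(T)}^+$) to the full space $\widehat{\mathcal{T}_n(T)}$ is as follows. Given $(t,V) \in \widehat{\mathcal{T}_n(T)}$ extending $(s,U)$ and deciding $\phi$, let $V^+$ be the restriction of $V$ to those pairs with value $1$. Then $(t,V^+) \in \widehat{\mathcal{T}_n(T)}^+$, still extends $(s,U)$ (as $U$ contains only value-$1$ entries), and still decides $\phi$ (witnessed by $t$ alone). The inductive hypothesis gives $\widehat{WF}_n(t,V^+)$, and monotonicity of $\widehat{WF}_n$ applied to $(t,V) \supseteq (t,V^+)$ yields $\widehat{WF}_n(t,V)$. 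Thus progressivity holds, Lemma~\ref{sigma_2_induction} yields $\widehat{WF}_n(\emptyset,\emptyset)$, and the second hypothesis gives $WF_0(\emptyset)$. Unpacking $WF'_0(\emptyset)$ for $\sigma_\emptyset = \langle\rangle$ gives exactly that $T$ is well-founded. I expect this bridge argument to be the main obstacle: it is what lets the restricted well-foundedness assumption on $\widehat{\mathcal{T}_n(T)}^+$ (the form convenient for witnessing a ``leftmost-path-like'' object) still drive a full induction on $\widehat{WF}_n$ via the transfer lemmas.
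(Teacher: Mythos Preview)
Your bridge argument has a genuine gap. In verifying the progressivity hypothesis of Lemma~\ref{sigma_2_induction} you do not get to choose the decision $d$: the principle is $[\exists d\,P(d)]\Rightarrow A(s,U)$, i.e.\ $\forall d\,[P(d)\Rightarrow A(s,U)]$, and in the proof of that lemma the decision is forced to be $\rho(|\sigma|)$, which ranges over \emph{all} decisions, including those of the form $U(\psi)$. Your $V\mapsto V^{+}$ trick is fine when $d$ is a formula (since deciding a formula depends only on $t$), but it fails when $d=U(\psi)$ and the extension $(t,V)$ has $V(\psi)=0$: then $\psi\notin\dom(V^{+})$, so $(t,V^{+})$ no longer decides $U(\psi)$ and the inductive hypothesis gives nothing. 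These $U(\psi)$ decisions are not a technicality; they are precisely what makes $\widehat{\mathcal{T}_n(T)}^{+}$ have no path deciding every decision, so you cannot simply drop them from the enumeration $\rho$ either.

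A telling symptom is that you never invoke the first hypothesis of the theorem. In the paper this hypothesis is exactly what covers the missing case: one first shows, \emph{before} appealing to Lemma~\ref{sigma_2_induction}, that $\widehat{WF}_n(s,U)$ already holds for every $(s,U)\in\widehat{\mathcal{T}_n(T)}\setminus\widehat{\mathcal{T}_n(T)}^{+}$. Given such $(s,U)$ with $U(\phi)=0$, either $T_{\sigma_{s,\phi}}$ is ill-founded, in which case $\sigma_{s,\phi}\prec\sigma_s$ witnesses $WF_0(\pi^n_0(s))$ and the first hypothesis gives $\widehat{WF}_n(s,U)$ directly; or $T_{\sigma_{s,\phi}}$ is well-founded, and one runs a transfinite induction along it, applying the decide lemma for the formula $\exists k\,\phi(|\sigma_t|,k)$ at each node. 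Only after this preliminary argument does the induction on $\widehat{\mathcal{T}_n(T)}^{+}$ via Lemma~\ref{sigma_2_induction} go through: any extension $(t,V)\supseteq(s,U)$ deciding the given $d$ is either in $\widehat{\mathcal{T}_n(T)}^{+}$ (handled by the inductive hypothesis) or outside it (handled by the preliminary step).
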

\begin{proof} 
We first show that $\widehat{WF}_{n}(s,U)$ holds for all $(s,U)\in\widehat{\mathcal{T}_{n}(T)}\setminus\widehat{\mathcal{T}_{n}(T)}^+$.  Let $(s,U)$ be given with $U(\phi)=0$ for some $\phi$.  If $T_{\sigma_{s,\phi}}$ is ill-founded then $\sigma_{s,\phi}\prec\sigma_s$ witnesses $WF_0(\pi^n_0(s))$, and therefore $\widehat{WF}_{n}(s,U)$.  Otherwise, for each $\tau\in T_{\sigma_{s,\phi}}$, let
\[\mathcal{S}_\tau=\{(t,V)\supseteq(s,U)\mid \sigma_{t,\phi}=\tau\}.\]
We proceed by induction on $\tau\in T_{\sigma_{s,\phi}}$ showing that for every $(t,V)\in\mathcal{S}_\tau$, $\widehat{WF}_{n}(t,V)$.

Let $(t,V)\in\mathcal{S}_\tau$ be given and suppose that for every $k$ such that $\sigma_{t,\phi}{}^\frown\langle k\rangle\in T$ and every $(t',V')\supseteq(s,U)$ with $\sigma_{t',\phi}=\sigma_{t,\phi}{}^\frown\langle k\rangle$, $\widehat{WF}_{n}(t',V')$.  Let $(t',V')$ be any extension of $(t,V)$ deciding $\exists k\phi(|\sigma_t|,k)$ (note that, pairing variables and using the fact that $n$ is a successor, this has the same rank as $\phi$).  Then since $V'(\phi)=V(\phi)=U(\phi)=0$, it must be that there is such a $k$, and therefore $t'\vdash\phi(|\sigma_t|,k)$ for some $k$ and $\sigma_{t',\phi}\in T$, so $\widehat{WF}_{n}(t',V')$.  So $\widehat{WF}_{n}(t',V')$ holds for any $(t',V')\supseteq(t,V)$ deciding $\exists k\phi(|\sigma_t|,k)$, and therefore $\widehat{WF}_{n}(t,V)$ holds.

By induction, for any $\tau\in T_{\sigma_{s,\phi}}$, any $(t,V)\in\mathcal{S}_\tau$ satisfies $\widehat{WF}_{n}(t,V)$.  In particular, $\widehat{WF}_{n}(s,U)$.

Now we show that $\widehat{WF}_{n}(s,U)$ holds for $\widehat{\mathcal{T}_{n}(T)}^+$ using the modified induction given by the previous lemma.  Suppose that $(s,U)\in\widehat{\mathcal{T}_{n}(T)}^+$ and $\psi$ is a decision such that whenever $(t,V)\supseteq(s,U)$, $(t,V)\in\widehat{\mathcal{T}_{n}(T)}^+$, and $(t,V)$ decides $\psi$, $\widehat{WF}_{n}(t,V)$.  Then for any $(t,V)\supseteq(S,U)$ deciding $\psi$, either $(t,V)\in\widehat{\mathcal{T}_{n}(T)}^+$, in which case $\widehat{WF}_{n}(t,V)$ by assumption, or $(t,V)\not\in\widehat{\mathcal{T}_{n}(T)}^+$, in which case we have just shown that $\widehat{WF}_{n}(t,V)$.  Therefore, by the previous lemma, for every $(s,U)\in\widehat{\mathcal{T}_{n}(T)}^+$, $\widehat{WF}_{n}(s,U)$.

It follows in particular that $\widehat{WF}_{n}(\emptyset,\emptyset)$, and therefore $WF_0(\emptyset)$.  Since there are no $\sigma\in T$ with $\sigma\prec\emptyset$, it follows that $T_\emptyset=T$ is well-founded.
\end{proof}

\begin{definition} 
Let $\phi$ be a closed formula of $\mathcal{L}$.  Then $\hat\phi(X,Y)$ is the formula of second-order arithmetic which interprets the function symbol $F$ by $X$ and the predicate symbol $\hat{T}$ by $Y$.
\end{definition}

\begin{lemma}[\ACA]
Let $\Lambda$ be a path through $\mathcal{T}_n(T)$ deciding all formulas of rank $\leq n$ and let $\sigma_\Lambda$ be the corresponding sequence through $T$ given by $\sigma_\Lambda(i)=j$ iff $F(i)=j\in\Lambda(m)$ for some (and therefore cofinitely many) $m$.

Then whenever $\phi$ is a closed formula of rank $\leq n$, the following are equivalent:
\begin{enumerate} 
  \item There is an $m$ such that $\Lambda(m)\vdash\phi$,
  \item $\hat\phi(\sigma_\Lambda,T)$.
\end{enumerate}
\end{lemma}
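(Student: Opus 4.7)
The plan is to prove the equivalence by induction on the structure of $\phi$. The key background fact is that $\hat{\Lambda(m)}$ is consistent for every $m$ (by the definition of $\mathcal{T}_n(T)$), and that the $\Lambda(m)$ form an increasing chain under $\subseteq$ along the path. Together these ensure that if $\Lambda(m) \vdash \phi$ then no $\Lambda(m')$ can contain a formula incompatible with $\phi$, since otherwise consistency of $\hat{\Lambda(\max(m,m'))}$ would fail. The hypothesis that $\Lambda$ decides every basic formula of rank $\leq n$ will be invoked at each quantifier step to force the path to pin down a truth value.

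For the base cases, I would split atomic $\phi$ into three sub-cases. When $\phi$ is $F(i)=j$, the hypothesis that $\Lambda$ decides $F(i)=?$ gives some $m$ and $k$ with $F(i)=k\in\Lambda(m)$, so $\sigma_\Lambda(i)=k$ by definition, and $\Lambda(m)\vdash F(i)=j$ iff $k=j$ iff $\hat\phi(\sigma_\Lambda,T)$. When $\phi$ is $\hat T(n)$, the equivalence is immediate from the $\hat{T}$-facts adjoined in forming $\hat{\Lambda(m)}$. Purely arithmetic atomic formulas are decided by pure logic and their truth in the interpretation $(\sigma_\Lambda,T)$ coincides with truth in $\mathbb{N}$. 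The boolean cases $\phi = \phi_1\wedge\phi_2$, $\phi_1\vee\phi_2$, $\neg\phi_1$, $\phi_1\rightarrow\phi_2$ follow routinely from the inductive hypothesis applied to subformulas, using consistency of $\hat{\Lambda(m)}$ to handle negation.

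For the quantifier case, consider $\phi=\exists x\psi$; the universal case is dual. For the forward direction, suppose $\Lambda(m)\vdash\exists x\psi$. Since $\Lambda$ decides $\exists x\psi$, either $\exists x\psi\in\Lambda(m')$ for some $m'$, or $\forall x\neg\psi\in\Lambda(m')$ for some $m'$; the latter would contradict consistency of $\hat{\Lambda(\max(m,m'))}$ together with derivability of $\exists x\psi$. So $\exists x\psi\in\Lambda(m')$, and the defining clause of $\mathcal{T}_n(T)$ yields some $i$ with $\hat{\Lambda(m')}\cap\mathcal{F}_{rk(\psi)}\vdash\psi(i)$. Since $\psi(i)$ has rank strictly smaller than $\exists x\psi$, and hence $\leq n$, the inductive hypothesis gives $\widehat{\psi(i)}(\sigma_\Lambda,T)$, so $\hat\phi(\sigma_\Lambda,T)$ holds. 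For the backward direction, if $\hat\phi(\sigma_\Lambda,T)$ then some $i$ satisfies $\widehat{\psi(i)}(\sigma_\Lambda,T)$; by the inductive hypothesis some $\Lambda(m)\vdash\psi(i)$, whence $\Lambda(m)\vdash\exists x\psi$.

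The main obstacle is the bookkeeping around rank at the quantifier step: I must confirm that whenever the inductive hypothesis is applied to a subformula such as $\psi(i)$, every basic formula of rank $\leq n$ arising (including witnesses demanded by the defining clause of $\mathcal{T}_n(T)$ for nested existentials) is actually decided along $\Lambda$. This is guaranteed by the global hypothesis that $\Lambda$ decides every basic formula of rank $\leq n$, but the invariant that every formula considered has rank $\leq n$ must be carried through the induction explicitly.
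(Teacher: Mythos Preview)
Your proposal is correct and follows essentially the same approach as the paper: induction on the structure of $\phi$, handling atomic formulas from the definitions, propositional connectives routinely via the deciding hypothesis and consistency, and the quantifier case via the witnessing clause in the definition of $\mathcal{T}_n(T)$. You are slightly more explicit than the paper in reducing ``$\Lambda(m)\vdash\exists x\psi$'' to ``$\exists x\psi\in\Lambda(m')$'' using consistency and the deciding hypothesis, a step the paper leaves implicit; your remark about rank bookkeeping is also a point the paper does not spell out but which is indeed covered by the global hypothesis.
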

\begin{proof} 
We proceed by induction on formulas.  When $\phi$ is atomic, the equivalence follows immediately from the definitions.

Suppose the claim holds for $\phi$ and $\psi$.  The claim for $\neg\phi$ follows from the equivalence for $\phi$ and the fact that $\Lambda$ decides all formulas of rank $\leq n$, including $\neg\phi$.  Similarly for other propositional combinations of $\phi$ and $\psi$.

Suppose that for every $k$, the claim holds for $\phi(k)$.  If $\exists x\phi\in\Lambda(m)$ then there is some $k$ such that $\Lambda(m)\vdash\phi(k)$, and by IH, $\hat\phi(k)(\sigma_\Lambda,T)$, and therefore $\widehat{\exists x\phi}(\sigma_\Lambda,T)$.  If $\forall x\phi\in\Lambda(m)$ then there are no $m',k$ such that $\Lambda(m')\vdash\neg\phi(k)$, and therefore $\widehat{\neg\phi(k)}(\sigma_\Lambda,T)$ never holds, so $\widehat{\forall x\phi}(\sigma_\Lambda,T)$ holds.  The other direction follows since either $\widehat{\exists x\phi}(\sigma_\Lambda,T)$ or $\widehat{\forall x\neg\phi}(\sigma_\Lambda,T)$ must hold, and there is some $m$ such that either $\exists x\phi\in\Lambda(m)$ or $\forall x\neg\phi\in\Lambda(m)$. 
\end{proof}

\begin{theorem}
For any finite $n>0$, \SPiti{n+2} implies \RLPP{n}.
\label{spiti_rmpp}
\end{theorem}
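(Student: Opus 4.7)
The plan is to apply Theorem \ref{arithmetic_well_founded} contrapositively. Given an ill-founded tree $T$, I would first verify that the first two premises of that theorem hold unconditionally in \ACA{}: the implication $WF_0(\pi^n_0(s)) \Rightarrow \widehat{WF}_n(s,U)$ follows by iterating Lemma \ref{wf_increment} up the hierarchy from $WF_0$ to $WF_n$ (using the factorization $\pi^n_k = \pi^{k+1}_k \circ \pi^n_{k+1}$ at each step) and then invoking the fact that $WF_n(s)$ implies $\widehat{WF}_n(s,U)$, while $\widehat{WF}_n(\emptyset,\emptyset) \Rightarrow WF_0(\emptyset)$ follows by first passing from $\widehat{WF}_n$ to $WF_n$ and then iterating Lemma \ref{wf_empty_decrement} downward. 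With the first two premises in hand and $T$ ill-founded, Theorem \ref{arithmetic_well_founded} (which is available in \SPiti{n+2}) forces the failure of its third premise, so there exists an infinite path $((s_m, U_m))_{m \in \mathbb{N}}$ through $\widehat{\mathcal{T}_{n}(T)}^+$ that decides every decision.

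From this path I would extract $\Lambda$ by setting $\Lambda(i) = j$ whenever $F(i) = j \in s_m$ for some $m$; totality of $\Lambda$ and the fact that $\Lambda$ is a path through $T$ follow from the decision hypothesis and the defining clauses of $\mathcal{T}_n(T)$. To verify that $\Lambda$ is $\Sigma_n$-relatively leftmost, suppose for contradiction that $\Lambda'$ is a path through $T$, $\Sigma_n$ in $T \oplus \Lambda$, with $\Lambda' < \Lambda$. Contracting alternating quantifier blocks via the pairing function, encode the defining condition $\Lambda'(x) = y$ as a basic rank-$n$ formula $\exists z\,\phi(x,y,z)$ of $\mathcal{L}$, under the interpretation sending $F$ to $\Lambda$ and $\hat{T}$ to $T$; the preceding lemma on $\hat\phi$ translates external $\Sigma_n$ satisfaction into formal derivability from finite segments of the path. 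Once $m$ is large enough, $\sigma_{s_m,\phi}$ extends past the first position where $\Lambda'$ and $\Lambda$ diverge, so $\sigma_{s_m,\phi} \prec \sigma_{s_m}$ and $\phi$ eventually lies in $\dom(U_{m'})$. Since the path decides every decision, $U_{m''}(\phi)$ is defined for some $m''$, and since $(s_{m''}, U_{m''}) \in \widehat{\mathcal{T}_{n}(T)}^+$, $U_{m''}(\phi) = 1$; but each of the three excluding conditions (inconsistent values, a missing existential witness, or $\sigma_{s_{m''},\phi} \notin T$) is incompatible with $\Lambda'$ being a total single-valued path through $T$, yielding the contradiction.

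The main obstacle I anticipate is the rank bookkeeping: I need to verify that any $\Sigma_n$ formula in $T \oplus \Lambda$ really can be presented as $\exists z\,\phi(x,y,z)$ with $\phi$ of rank $n-1$ in $\mathcal{L}$, so that $\exists z\,\phi$ is legitimately basic of rank $n$ and hence eligible to appear in $\dom(U_{m''})$. Care is also required so that the encoding makes the three excluding conditions translate back into literal failures of $\Lambda'$: the inconsistent-values clause relies on $\Lambda'$ being single-valued, the missing-witness clause on $\Lambda'$ being total, and the $\sigma_{s_{m''},\phi} \notin T$ clause on $\Lambda'$ genuinely lying in $T$. Since these are precisely the properties we have assumed of $\Lambda'$, once the rank accounting is set up correctly, each excluding condition yields its own immediate contradiction.
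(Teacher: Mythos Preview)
Your proposal is correct and follows essentially the same approach as the paper: both arguments hinge on Theorem \ref{arithmetic_well_founded} together with the lemma translating membership in $\Lambda(m)$ into satisfaction of $\hat\phi(\sigma_\Lambda,T)$, and both derive a contradiction from a hypothetical $\Sigma_n$ path $\Lambda'\prec\Lambda$ by showing that the associated $U(\phi)$ would have to equal $0$ in $\widehat{\mathcal{T}_n(T)}^+$. The only difference is organizational---you run the argument contrapositively (start from $T$ ill-founded, extract the deciding path, then verify it is relatively leftmost) and spell out the verification of the first two premises via iterated applications of Lemmas \ref{wf_increment} and \ref{wf_empty_decrement}, whereas the paper assumes the negation of \RLPP{n} first and leaves those iterations implicit.
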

\begin{proof} 
Let $T$ be a tree of finite sequences and let $\prec$ be a well-founded partial order.  Suppose that for every path $\Lambda$ through $T$, there is a $\Lambda'$ which is $\Sigma_n$ in $T\oplus\Lambda$ with $\Lambda'\prec\Lambda$.

Suppose there were an infinite path $\Lambda$ through $\widehat{\mathcal{T}_n(T)}^+$ deciding every decision of rank $\leq n$.  For each $i$, there is a unique $\sigma_\Lambda(i)$ such that $F(i)=\sigma_\Lambda(i)\in \hat\pi(\Lambda(j))$ for some $j$ (and therefore cofinitely many $j$).  The function $\sigma_\Lambda$ must be a path through $T$.  Suppose $\Lambda'\prec\sigma_\Lambda$ and there is a $\Sigma_n$ formula $\phi$ such that $\exists z \phi(i,j,z,T,\sigma_\Lambda)$ iff $\Lambda'(i)=j$.  By the previous lemma, we have $\exists z\phi(i,j,z)\in\Lambda(m)$ for some $m$ iff $\Lambda'(i)=j$.  Since $\Lambda'$ is a path through $T$, none of the excluding conditions for $\phi$ can ever hold, so whenever $\Lambda(j)=(s,U)$ and $U(\phi)$ is defined, $U(\phi)=0$.  But this would contradict the fact that $\Lambda$ is a path through $\widehat{\mathcal{T}_n(T)}^+$.  So there is no infinite path $\Lambda$ through $\widehat{\mathcal{T}_n(T)}^+$ deciding every decision.

Observe that the first two conditions in Theorem \ref{arithmetic_well_founded} all hold in \SPiti{n+2} (since $n$ is finite), so we obtain the conclusion that $T$ is well-founded.
\end{proof}

\section{The Nash-Williams Theorem and Menger's Theorem}
\subsection{The Nash-Williams Theorem}
In what follows, we will use the letter $b$ (and variants $b'$ and so on) to represent finite sequences which are intended to be increasing (and specifically, members of a barrier).  We briefly define the key notions needed to state and prove the Nash-Williams Theorem; a more careful exposition is found in \cite{marcone:MR1428011}.
\begin{definition} 
A sequence $b$ is \emph{increasing} if whenever $i<j$, $b(i)<b(j)$.

Let $B$ be a set of finite increasing sequences.  We write $base(B)$ for the set of $n$ such that for some $b\in B$ and some $i\in\dom(b)$, $b(i)=n$.

A \emph{barrier} is a set $B$ of finite increasing sequences such that:
\begin{itemize} 
  \item $base(B)$ is infinite,
  \item If $\Lambda$ is an infinite increasing sequence from $base(B)$, there is a $b\in B$ such that $b\sqsubset\Lambda$,
  \item If $b,b'\in B$ and $b\neq b'$ then $\rng(b)\not\subseteq \rng(b')$
\end{itemize}

If $b$ is a non-empty sequence, we write $b^-$ for the sequence with $|b^-|=|b|-1$ given by $b^-(i)=b(i+1)$ (and $b^-(i)$ is undefined if $b(i+1)$ is).

If $b,b'$ are sequences, we write $b\triangleleft b'$ if there is a $ b^*$ such that $b\sqsubseteq b^*$ and $b'\sqsubseteq (b^*)^-$.

Let $Q$ be a partial order.  If $B$ is a barrier, $B'\subseteq B$ (where $B'$ is finite or infinite), and $f:B'\rightarrow Q$, $f$ is \emph{good} if for some $b,b'\in B'$ with $b\triangleleft b'$, $f(b)\preceq_Q f(b')$.  If $f$ is not good, $f$ is \emph{bad}.  If for every $b,b'\in B'$ with $b\triangleleft b'$, $f(b)\preceq_Q f(b')$ then $f$ is \emph{perfect}.

If $B$ is a barrier, $Q$ is a \emph{$B$-better-quasi-order} ($B$-bqo) if for every barrier $B'\subseteq B$ and every $f:B'\rightarrow Q$, $f$ is good.  $Q$ is a \emph{better-quasi-order} (bqo) if for every barrier $B$, $Q$ is a $B$-bqo.
\end{definition}
$B$ is a barrier iff $\{b\mid \forall b'\in B\ b'\not\sqsubseteq b\}$ is well-founded as a tree from $base(B)$.

\begin{definition} 
Given $Q$, $\tilde Q$ is the class of all pairs $(\alpha,f)$ where $\alpha$ is a well-order and $f:\alpha\rightarrow Q$.  If $(\alpha,f),(\beta,g)\in\tilde Q$, we say $(\alpha,f)\tilde\preceq_Q(\beta,g)$ if there is a strictly increasing function $\pi:\alpha\rightarrow\beta$ such that for all $\gamma\in\alpha$, $f(\gamma)\preceq_Q g(\pi(\gamma))$.

{\NWT}, the Nash-Williams Theorem, is the statement that for if $Q$ is a bqo then $\tilde Q$ is bqo.\footnote{Note that even though $\tilde Q$ is not a set, we can still formulate the statement that $\tilde Q$ is bqo in second order arithmetic.}

{\GHT}, the Generalized Higman's Theorem, is the statement that if $Q$ is a $B$-bqo then $Q^{<\omega}$ is a $B$-bqo.\footnote{Our statement of {\GHT} differs slightly from Marcone's: Marcone takes {\GHT} to be the statement that if $Q$ is a $B$-bqo for all barriers $B$ then $Q^{<\omega}$ is a $B$-bqo for all barriers $B$, which is a $\Pi^1_3$ statement, and mentions this version of {\GHT} as an intermediate step.}
\end{definition}

Marcone \cite{marcone:MR1428011} has shown:
\begin{theorem}
\begin{enumerate}
  \item In {\ATR}, {\NWT} is equivalent to {\GHT}.
  \item {\Pioo} implies {\GHT}.
\end{enumerate}
\end{theorem}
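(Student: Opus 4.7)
Both items are due to Marcone in \cite{marcone:MR1428011}, so the plan is to reconstruct his arguments and verify they go through in the stated systems.

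For item (1), I treat the two directions separately over \ATR. For GHT $\Rightarrow$ NWT, the plan is to iterate GHT along a well-order: starting from $Q$ bqo, GHT yields $Q^{<\omega}$ bqo; then $(Q^{<\omega})^{<\omega}$ bqo; and so on transfinitely. Arithmetic transfinite recursion, available in \ATR, carries this iteration through every well-order, producing for each $\alpha$ a bqo containing codes of all well-ordered sequences in $Q$ of length $<\alpha$. An arbitrary $(\beta,g)\in\tilde Q$ embeds into such an iterate, so a bad barrier-array into $\tilde Q$ would induce a bad barrier-array into one of the iterates, contradicting GHT. For NWT $\Rightarrow$ GHT, I embed $Q^{<\omega}$ into $\tilde Q$ via the obvious map sending a finite sequence to itself, viewed as a map from a finite well-order, and argue that a bad barrier-array witnessing failure of GHT lifts to a bad barrier-array into $\tilde Q$. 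The \ATR{} base theory is used to upgrade the single-barrier bqo hypothesis of GHT to the full bqo hypothesis that NWT requires, through a transfinite construction of auxiliary barriers.

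For item (2), the plan is to execute the Nash-Williams minimal-bad-sequence argument in the barrier setting, paralleling the proof of Higman's theorem earlier in the paper. Given a supposed bad $f:B'\to Q^{<\omega}$, encode bad barrier-arrays as paths through an appropriate tree and use $\Pi^1_1$ comprehension, via the leftmost-path principle, to select a bad $f$ that is $\triangleleft$-minimal. Since no $f(b)$ can be $\langle\rangle$, write $f(b)=f'(b){}^\frown\langle q(b)\rangle$; using $B$-bqo-ness of $Q$, pass to a sub-barrier on which the tail-value map $q$ is good, and splice $f'$ into $f$ on that sub-barrier to obtain a bad array strictly smaller than $f$, contradicting minimality.

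The main obstacle for item (1) is verifying in \ATR{} that the transfinite iteration of $(\cdot)^{<\omega}$ is definable and that the embedding of $\tilde Q$ into an iterate preserves badness of barrier-arrays; Marcone's coding machinery does exactly this bookkeeping. The main obstacle for item (2) is that a ``minimal bad array indexed by a barrier'' is not literally a minimal path through a tree of sequences from $\mathbb N$; the plan is to arrange the coding so that the leftmost-path principle delivers the minimal bad array, which is supplied by \Pioo{} via the equivalence noted earlier in the paper.
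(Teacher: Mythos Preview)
The paper does not prove this theorem at all; it is stated as a citation of Marcone's results (``Marcone \cite{marcone:MR1428011} has shown:\ldots''), so there is no paper proof to compare against. Your proposal is therefore a reconstruction of Marcone's arguments, and I can only assess it on those terms and against the related proof the paper \emph{does} give (Theorem~\ref{tmpp_ght}).

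Your sketch for item~(2) is essentially the right argument and matches what the paper later does in its own proof that {\TLPP} implies {\GHT}: encode bad partial arrays as paths in a tree, take a minimal one (via the leftmost/minimal path principle, which {\Pioo} gives), strip the last entry $q(b)$ from each $f(b)$, use the clopen Ramsey theorem on the derived barrier to make $q$ perfect on a sub-barrier, and splice to get a smaller bad array.

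Your sketch for item~(1) has real gaps. For $\mathrm{GHT}\Rightarrow\mathrm{NWT}$, iterating $(\cdot)^{<\omega}$ transfinitely does not capture $\tilde Q$: an element of $\tilde Q$ is a $Q$-sequence indexed by an arbitrary countable well-order, and already an $\omega$-indexed sequence does not embed into any finite tower $((Q^{<\omega})^{<\omega})^{\cdots}$, nor into a direct limit of such towers, in any way that respects $\tilde\preceq_Q$. Marcone's reduction works at the level of barriers rather than by nesting finite-sequence constructors: one passes to derived barriers (e.g.\ $B^2$) and uses the barrier structure to peel off initial segments of the well-ordered sequences, with {\ATR} supplying the transfinite recursion along the well-founded barrier tree rather than along the target well-orders. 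For $\mathrm{NWT}\Rightarrow\mathrm{GHT}$, your plan to ``upgrade the single-barrier bqo hypothesis of {\GHT} to the full bqo hypothesis that {\NWT} requires'' is backwards and cannot work: $B$-bqo for a fixed $B$ is strictly weaker than bqo, and no amount of {\ATR} will manufacture bqo-ness from $B$-bqo-ness. The honest route is either to work with Marcone's $\Pi^1_3$ formulation of {\GHT} (where the hypotheses match), or to observe that the paper only \emph{needs} the direction $\mathrm{GHT}\Rightarrow\mathrm{NWT}$ for its Corollary~\ref{tmpp_nwt}.
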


Since {\GHT} is a $\Pi^1_2$ sentence, it is not possible for {\GHT} be equivalent to {\Pioo}.  We will now show that Marcone's proof goes through essentially unchanged in {\TMPP}.\footnote{Marcone's proof uses the ``locally minimal bad array lemma'', which is a principle similar, and equivalent, to the minimal path principle.  This lemma is essentially an encapsulation of the particular application of the minimal path principle we use below.  Another family of relative principles---the relatively locally minimal bad array lemma and so on---could be defined, but since they would be minor combinatorial variants on the principles we have given, we do not do so.}

\begin{definition} 
If $B$ is a barrier and $X\subseteq\mathbb{N}$, we write $B\upharpoonright X$ for $\{b\in B\mid \rng(b)\subseteq X\}$.
\end{definition}

\begin{lemma} [\RCA]
If $X$ is an infinite subset of $base(B)$ then $B\upharpoonright X$ is a barrier.
\end{lemma}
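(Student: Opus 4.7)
The plan is to verify the three clauses in the definition of ``barrier'' directly for $B\upharpoonright X$, using only the fact that $B$ itself is a barrier, that $X$ is infinite, and that $X\subseteq base(B)$. Since $B\upharpoonright X = \{b\in B\mid \rng(b)\subseteq X\}$ exists by arithmetic comprehension available in \RCA{}, the whole argument should be a short unpacking of definitions; no induction or model-theoretic machinery is required.

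For the first two clauses I would exploit a single observation: if $Y\subseteq X$ is infinite and $\Lambda_Y$ is the increasing enumeration of $Y$, then $\Lambda_Y$ is an infinite increasing sequence from $base(B)$, so the second barrier condition on $B$ gives some $b\in B$ with $b\sqsubset\Lambda_Y$; because $\rng(b)\subseteq\rng(\Lambda_Y)\subseteq X$, this $b$ lies in $B\upharpoonright X$. Taking $Y=X$ already yields one element of $base(B\upharpoonright X)$. To see that $base(B\upharpoonright X)$ is infinite, apply this to $Y=\{x\in X\mid x>n\}$ for each $n$: the resulting $b\in B\upharpoonright X$ has $b(0)>n$, so $base(B\upharpoonright X)$ contains elements greater than every $n$. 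For the reaching clause, let $\Lambda$ be an infinite increasing sequence from $base(B\upharpoonright X)\subseteq X\subseteq base(B)$; since $B$ is a barrier there is $b\in B$ with $b\sqsubset\Lambda$, and once more $\rng(b)\subseteq\rng(\Lambda)\subseteq X$ gives $b\in B\upharpoonright X$.

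The third clause is immediate: if $b,b'\in B\upharpoonright X$ are distinct, they are also distinct elements of $B$, so the incomparability of ranges for $B$ transfers verbatim to $B\upharpoonright X$.

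I do not expect a real obstacle here; the only step that is not pure inheritance from $B$ is the infinitude of $base(B\upharpoonright X)$, and the ``shift past $n$'' trick above handles it. The lemma is really just a sanity check that the notation $B\upharpoonright X$ lives in the right category, and its interest lies in how it will be used in later extraction arguments for the Nash--Williams Theorem rather than in its proof.
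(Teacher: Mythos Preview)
Your proof is correct and follows essentially the same route as the paper's: both verify the three barrier conditions directly, the key observation being that any infinite increasing sequence from $X\subseteq base(B)$ has an initial segment $b\in B$ with $\rng(b)\subseteq X$, hence $b\in B\upharpoonright X$. One minor slip: $B\upharpoonright X$ exists in \RCA{} by $\Delta^0_1$ (recursive) comprehension, not ``arithmetic comprehension'' (which \RCA{} does not have); also, the paper's version actually concludes the slightly stronger $base(B\upharpoonright X)=X$, though your weaker infinitude claim suffices for the lemma as stated.
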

\begin{proof} 
Clearly $base(B\upharpoonright X)\subseteq X$.  Suppose $\Lambda$ is an infinite increasing sequence from $X$.  Then since $X\subseteq base(B)$, there is a $b\in B$ such that $b\sqsubset\Lambda$, and therefore $b\in B\upharpoonright X$.  Therefore $base(B\upharpoonright X)=X$ and every infinite sequence through $X$ has an initial segment in $B\upharpoonright X$.  The other two conditions are immediate since $B\upharpoonright X\subseteq B$.
\end{proof}

\begin{lemma} [\RCA]
If $B'\subseteq B$ is a barrier then $B'=B\upharpoonright base(B')$.
\end{lemma}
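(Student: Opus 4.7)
The plan is to prove each inclusion separately. The inclusion $B' \subseteq B \upharpoonright base(B')$ is immediate: if $b \in B'$ then $b \in B$ (since $B' \subseteq B$), and $\rng(b) \subseteq base(B')$ by the definition of $base(B')$.

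For the reverse inclusion, suppose $b \in B \upharpoonright base(B')$, so $b \in B$ and $\rng(b) \subseteq base(B')$. I would extend $b$ to an infinite increasing sequence $\Lambda$ with $\rng(\Lambda) \subseteq base(B')$, which is possible in \RCA{} because $base(B')$ is infinite: let $\Lambda(i) = b(i)$ for $i < |b|$, and then recursively pick $\Lambda(i+1)$ to be the least element of $base(B')$ strictly greater than $\Lambda(i)$. Such an element exists at each stage because $base(B')$ is infinite and therefore unbounded.

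Now apply the barrier property of $B'$ to $\Lambda$: there is some $b' \in B'$ with $b' \sqsubset \Lambda$. Since $b \sqsubseteq \Lambda$ as well and both $b$ and $b'$ are initial segments of the same increasing sequence $\Lambda$, one must be an initial segment of the other; in particular, either $\rng(b) \subseteq \rng(b')$ or $\rng(b') \subseteq \rng(b)$. But $b, b' \in B$, and the incomparability condition in the definition of a barrier forces $b = b'$. Thus $b = b' \in B'$, as required.

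The only potentially subtle step is the construction of $\Lambda$, which needs arithmetic only over $base(B')$ and the ordering on $\mathbb{N}$, so it is clearly available in \RCA. Beyond that, the argument is entirely a direct bookkeeping exercise from the definition of a barrier, so I do not anticipate a real obstacle.
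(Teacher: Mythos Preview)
Your proof is correct and follows essentially the same route as the paper: extend $b$ to an infinite increasing sequence in $base(B')$, apply the barrier property of $B'$ to get an initial segment $b'\in B'$, and then use the incomparability condition for $B$ to force $b=b'$. You include a bit more detail (the forward inclusion and the explicit construction of $\Lambda$), but the argument is the same.
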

\begin{proof} 
Suppose $b\in B\upharpoonright base(B')$.  Let $\Lambda$ be an infinite increasing sequence from $base(B')$ such that $b\sqsubset\Lambda$.  Then there is a $b'\in B'\subseteq B$ such that $b'\sqsubset\Lambda$.  If $b'\neq b$ then we have either $\rng(b)\subseteq\rng(b')$ or $\rng(b')\subseteq\rng(b)$, contradicting the fact that $B$ is a barrier.
\end{proof}

\begin{definition} 
A sequence $\sigma$ from $B\times Q$, $\sigma=\langle(b_0,q_0),\ldots,(b_k,q_k)\rangle$ is a \emph{bad partial array} if:
\begin{itemize} 
  \item When $i<j$, $\max b_i\leq \max b_j$,
  \item If $b_i\triangleleft b_j$, $q_i\not\preceq_Q q_j$,
  \item If $b\in B\upharpoonright base(\{b_0,\ldots,b_k\})$ and $\max b<\max b_k$ then there is an $i<k$ such that $b=b_i$.
\end{itemize}

If $\sigma$ is a bad partial array, we define a partial function $f^\sigma:B\rightarrow Q$ by setting $f^\sigma(b)=q$ iff there is an $i$ such that $\sigma(i)=(b,q)$.  If $\Lambda$ is an infinite path through the tree of bad partial arrays, we define $f^\Lambda$ similarly.
\end{definition}

\begin{lemma} [\RCA]
$f$ is a bad function from a barrier $B'\subseteq B$ to $Q$ iff there is an infinite path $\Lambda$ through the tree of bad partial arrays such that $f=f^\Lambda$.
\end{lemma}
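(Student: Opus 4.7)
The plan is to prove both directions by direct construction, using the preceding lemma that $B' = B\upharpoonright base(B')$ for every sub-barrier $B'\subseteq B$.

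For the forward direction, given bad $f:B'\to Q$ with $B'\subseteq B$ a barrier, I enumerate $B'$ as $b_0,b_1,\ldots$ in nondecreasing order of $\max b_i$, breaking ties lexicographically. This enumeration is available in \RCA{} because for each $N$ the set $\{b\in B:\max b\leq N\}$ is finite---each such $b$ is a strictly increasing sequence from $\{0,\ldots,N\}$, and the barrier condition forbids range inclusion among distinct members of $B$. Setting $\Lambda(i)=(b_i,f(b_i))$, the three conditions defining a bad partial array all follow: the first from the enumeration order, the second from badness of $f$, and the third from the preceding lemma---any $b\in B$ with $\rng(b)\subseteq base(\{b_0,\ldots,b_k\})\subseteq base(B')$ lies in $B\upharpoonright base(B')=B'$, hence equals some $b_j$, and $\max b<\max b_k$ forces $j<k$.

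For the reverse direction, given an infinite path $\Lambda$ with $\Lambda(i)=(b_i,q_i)$, I set $B'=\{b_i:i\in\mathbb{N}\}$ and show it is a barrier in $B$ on which $f^\Lambda$ is bad. The key preliminary is that $\max b_i$ is unbounded: since $f^\Lambda$ must be well-defined as a function, and since each fixed bound on $\max b$ admits only finitely many $b\in B$, the nondecreasing sequence $\max b_i$ cannot be eventually constant in an infinite path. Hence $B'$ exists as a set in \RCA{} (to decide $b\in B'$, search until $\max b_i>\max b$), $base(B')$ is infinite, and the range-incomparability condition is inherited from $B$. For any infinite increasing sequence $\Upsilon$ from $base(B')$, the barrier property of $B$ yields some $b\in B$ with $b\sqsubset\Upsilon$; choosing $k$ so large that $\rng(b)\subseteq base(\{b_0,\ldots,b_k\})$ and $\max b<\max b_k$, the third condition on bad partial arrays places $b$ in $B'$. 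Finally, $f^\Lambda$ is bad immediately by the second condition.

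The main bookkeeping hurdle is coordinating $base(\{b_0,\ldots,b_k\})$ with $base(B')$ in applying the preceding lemma at every finite stage. The subtlest step is the unboundedness of $\max b_i$ in the reverse direction, which ties together well-definedness of $f^\Lambda$, finiteness of each $\max$-level of $B$, and the nondecreasing monotonicity from the first condition; the rest is routine unwinding of definitions.
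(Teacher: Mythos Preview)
Your approach is essentially the same as the paper's: in the forward direction you enumerate $B'$ by nondecreasing $\max$ and verify the three conditions on bad partial arrays using $B'=B\upharpoonright base(B')$, and in the reverse direction you show $\dom(f^\Lambda)$ is a barrier by arguing that any $b\in B$ with range contained in $base(\dom(f^\Lambda))$ already occurs among the $b_i$ via the third clause. The paper's proof is terser but follows exactly this route.

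Where you go beyond the paper is in the \RCA-level bookkeeping: you justify the existence of the enumeration by finiteness of each $\max$-level of $B$, and you isolate the unboundedness of $\max b_i$ as the crux of the reverse direction. The paper simply asserts that for each $b$ one can find $n$ with $\max b$ strictly below the $\max$ at stage $n$, without pausing over why such $n$ exists. Your argument for unboundedness via well-definedness of $f^\Lambda$ together with finiteness of each $\max$-level is a reasonable way to fill this in (note it also implicitly rules out a path that merely repeats a single pair $(b,q)$ forever, which the raw definition does not obviously exclude). In any case, you are being at least as careful as the paper, and the underlying argument is identical.
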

\begin{proof} 
Suppose $B'\subseteq B$ is a barrier and $f:B'\rightarrow Q$ is bad.  Fix an enumeration of $B'$, $B'=\{b_0,b_1,\ldots\}$ such that if $i<j$ then $\max b_i\leq\max b_j$.  Define $\Lambda(i)=(b_i,f(b_i))$.  Clearly $f=f^\Lambda$.  We must check that if $\sigma\sqsubset\Lambda$ then $\sigma$ is a bad partial array; the first two conditions are immediate from the enumeration of $B'$ and the fact that $f$ is bad.  If $b\in B\upharpoonright base(\{b_0,\ldots,b_k\})$ and $\max b<\max b_k$ then $b\in B\upharpoonright base(B')=B'$, so there is an $i<k$ such that $b=b_i$.

Suppose $\Lambda$ is an infinite path through the tree of bad partial arrays.  Then $f^\Lambda$ is clearly bad, and we must check that $\dom(f^\Lambda)$ is a barrier.  If $b\in B\upharpoonright base(\dom(f^\Lambda))$ then there must be some $n$ such that $b\in base(\dom(f^{\Lambda\upharpoonright n}))$ and $\max b<base(\dom(f^{\Lambda\upharpoonright n}))$, which implies that $b\in\dom(f^{\Lambda\upharpoonright n})$.
\end{proof}

We use the following uniformly effective version of the clopen Ramsey Theorem (the proof of the clopen Ramsey Theorem in {\ATR} is carried out in \cite{friedman:MR694261}, with another proof given in \cite{avigad98}; the effective bounds for the complexity are given in \cite{clote:MR745367}).
\begin{theorem} [\ATR]
For each barrier $B$, there is an ordinal $\alpha$ such that whenever $B'\subseteq B$ is a barrier and $c:B'\rightarrow \{0,1\}$, there is an infinite $S\subseteq base(B)$ such that $S$ is $\Sigma_\alpha$ in $c\oplus B'$ and $c$ is constant on $B\upharpoonright S$.
\end{theorem}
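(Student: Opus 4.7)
The plan is to prove the theorem by transfinite induction on the rank of the barrier $B$, where $rk(B)$ is the ordinal rank of the well-founded tree of finite increasing sequences from $base(B)$ that have no initial segment in $B$. That this rank exists and that the induction is available in \ATR{} is the standard content of \cite{friedman:MR694261}. The induction hypothesis will say: for every ordinal $\beta < rk(B)$ there is an $\alpha_\beta$ such that the conclusion holds for every barrier of rank $\beta$ contained in $B$, with $\alpha_\beta$ depending computably (in the sense of \ATR) on $\beta$.

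For the base case $rk(B)=0$, the barrier is essentially a singleton sequence and the conclusion is immediate with $\alpha=0$. For the inductive step, given $B'\subseteq B$ a barrier and $c:B'\to\{0,1\}$, I would choose $n_0=\min base(B')$ and form the two derived barriers $B'_{n_0}=\{b^-\mid b\in B',\ b(0)=n_0\}$ on $base(B')\setminus\{n_0\}$, and $B'\upharpoonright(base(B')\setminus\{n_0\})$; both have strictly smaller rank. Using the colorings $c_{n_0}(b)=c(\langle n_0\rangle^\frown b)$ and $c$ respectively, the induction hypothesis produces thinnings at lower complexity. A diagonal construction, iterating this procedure by successively absorbing the minimum element of the current base, then yields an infinite $S\subseteq base(B')$ such that $c$ is constant on $B\upharpoonright S$.

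The complexity bound is obtained by observing that each stage of the recursion adds only a computable (in fact, arithmetic) amount of complexity to the previous stage's data, so $S$ sits at level $\Sigma_\alpha$ in $c\oplus B'$ for an $\alpha$ produced by the recursion on $rk(B)$; jumps accumulate at successor stages and one takes the natural limit at limit stages. The main obstacle is precisely the complexity calibration: one must verify that the bookkeeping yields an $\alpha$ depending only on $B$ (and not on the auxiliary $B'$ or $c$), which is exactly the effective content of \cite{clote:MR745367}. Because that reference establishes the bound in this uniform form, the full argument would invoke it rather than reprove the calibration from scratch.
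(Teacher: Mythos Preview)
The paper does not actually prove this theorem; it merely states it as a known uniformly effective version of the clopen Ramsey Theorem and cites \cite{friedman:MR694261} and \cite{avigad98} for the provability in \ATR{} and \cite{clote:MR745367} for the effective complexity bound. Your sketch follows the standard Nash--Williams/Friedman argument by induction on the rank of the barrier, and you correctly defer the complexity calibration to Clote, so in effect you and the paper arrive at the same place by invoking the same literature. One minor quibble: your base case $rk(B)=0$ is not quite right as stated (the smallest nontrivial barriers are sets of singletons, which have rank $1$ in the usual convention), but this is cosmetic and does not affect the overall plan.
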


We may adapt colorings of $B$ to colorings of pairs from $B$:
\begin{lemma} [\ATR]
For each barrier $B$, there is an ordinal $\alpha$ such that whenever $B'\subseteq B$ is a barrier and $c:\{b,b'\in B'\mid b\triangleleft b'\}\rightarrow\{0,1\}$, there is an infinite $S\subseteq base(B)$ such that $S$ is $\Sigma_\alpha$ in $c\oplus B'$ and $c$ is constant on $\{b,b'\in B\upharpoonright S\mid b\triangleleft b'\}$.
\end{lemma}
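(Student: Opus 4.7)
The plan is to reduce a two-variable $\triangleleft$-coloring to a one-variable coloring of a derived ``square barrier'' and then invoke the single-variable clopen Ramsey theorem from the previous lemma. Given a barrier $B$, define
\[B^{(2)}=\{c\mid c\text{ is the increasing enumeration of }\rng(b)\cup\rng(b')\text{ for some }b,b'\in B\text{ with }b\triangleleft b'\}.\]
I would first show that every $c\in B^{(2)}$ uniquely encodes its pair $(b,b')$: using the definition of $\triangleleft$ via some common extension $b^*$, one checks that $b\sqsubseteq c$ and $b'\sqsubseteq c^{-}$, and the barrier property of $B$ then makes $b$ the unique element of $B$ that is a finite-sequence initial segment of $c$ (and similarly for $b'$ and $c^{-}$). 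This recovery of $(b,b')$ from $c$ is computable uniformly in $B$.

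Next I would verify that $B^{(2)}$ is itself a barrier with $base(B^{(2)})=base(B)$. For the covering property, given an infinite increasing $\Lambda$ through $base(B)$, apply the barrier property of $B$ to $\Lambda$ to get $b\sqsubset\Lambda$ in $B$, then apply it again to the tail of $\Lambda$ starting with its second element to obtain $b'\sqsubset\Lambda^-$ in $B$, witnessing $b\triangleleft b'$ via a common initial segment of $\Lambda$; the increasing enumeration of $\rng(b)\cup\rng(b')$ is then an initial segment of $\Lambda$ lying in $B^{(2)}$. For incomparability, if $\rng(c_1)\subseteq\rng(c_2)$ with $c_i\in B^{(2)}$ corresponding to $(b_i,b'_i)$, then extend $c_2$ to an infinite increasing $\Lambda$ through $base(B)$; both $b_1$ and $b_2$ are initial segments of $\Lambda$ in $B$, forcing $b_1=b_2$, and the same argument on $\Lambda^-$ gives $b'_1=b'_2$, hence $c_1=c_2$. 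Finally, if $B'\subseteq B$ is a sub-barrier, then $(B')^{(2)}\subseteq B^{(2)}$ is visibly a sub-barrier with $base((B')^{(2)})=base(B')$, and is computable from $B'$.

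With $B^{(2)}$ in hand, apply the previous (single-variable) theorem to the ambient barrier $B^{(2)}$ to obtain an ordinal $\alpha$. Given any sub-barrier $B'\subseteq B$ and coloring $c:\{b,b'\in B'\mid b\triangleleft b'\}\to\{0,1\}$, form the induced coloring $\tilde c:(B')^{(2)}\to\{0,1\}$ defined by $\tilde c(\text{code of }(b,b'))=c(b,b')$; both $(B')^{(2)}$ and $\tilde c$ are computable from $B'$ and $c$. The previous theorem produces an infinite $S\subseteq base(B^{(2)})=base(B)$, $\Sigma_\alpha$ in $\tilde c\oplus(B')^{(2)}$ and hence $\Sigma_\alpha$ in $c\oplus B'$, on which $\tilde c$ is constant on $B^{(2)}\upharpoonright S$. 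Translating back via the bijection of the first step, this says exactly that $c$ is constant on $\{b,b'\in B\upharpoonright S\mid b\triangleleft b'\}$, as required.

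The main obstacle is the verification that $B^{(2)}$ is a barrier, specifically the clean recovery of the pair $(b,b')$ from the single sequence $c$ and the incomparability of $B^{(2)}$; once this combinatorial encoding is in place, the rest is routine bookkeeping to track the complexity bound $\alpha$ through the reduction.
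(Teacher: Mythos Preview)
Your proposal is correct and follows essentially the same approach as the paper: form the ``squared'' barrier $B^{(2)}$ (the paper writes $B^*=\{b\cup b'\mid b,b'\in B,\ b\triangleleft b'\}$), apply the single-variable clopen Ramsey theorem to it to obtain $\alpha$, and then translate a $\triangleleft$-coloring on a sub-barrier $B'$ into a coloring of $(B')^{(2)}\subseteq B^{(2)}$ to get the homogeneous $S$. You are more explicit than the paper about the unique decoding of $(b,b')$ from $b\cup b'$ and about the incomparability clause for $B^{(2)}$, but the argument is the same.
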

\begin{proof} 
Whenever $b,b'\in B$ with $b\triangleleft b'$, write $b\cup b'$ for the unique increasing sequence such $\rng(b\cup b')=\rng(b)\cup\rng(b')$.  Define $B^*=\{b\cup b'\mid b,b'\in B\text{ and }b\triangleleft b'\}$.  Note that $B^*$ is a barrier on $base(B)$: if $\Lambda$ is an infinite increasing sequence from $base(B)$, we may find some $b\in B$ such that $b\sqsubset\Lambda$.  We may also find some $b'\in B$ such that $b'\sqsubseteq\Lambda^-$.  Then we have $b\triangleleft b'$, and therefore $b\cup b'\sqsubseteq\Lambda^-$ and $b\cup b'\in B^*$.  Let $\alpha$ be such that given any coloring of a subbarrier of $B^*$, there is an infinite homogeneous subbarrier $\Sigma_\alpha$ in $c\oplus B^*$.

Now let $B'\subseteq B$ be given and let $c$ be a coloring of $\{b,b'\in B'\mid b\triangleleft b'\}$.  We may define a coloring $c^*$ on $(B')^*\subseteq B^*$ by $c^*(b\cup b')=c(b,b')$.  Let $S\subseteq base(B')$ be given such that $c^*$ restricted to $(B')^*\upharpoonright S$ is constant.  Then $c$ restricted to $B'\upharpoonright S$ is constant.
\end{proof}

\begin{theorem} [\TLPP]
{\GHT} holds.
\label{tmpp_ght}
\end{theorem}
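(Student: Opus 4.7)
My plan is to adapt Marcone's {\Pioo}-proof of {\GHT} verbatim, with {\TMPP} (equivalent to {\TLPP} by Theorem~\ref{tmpp_is_tlpp}) playing the role of {\Pioo}. Suppose for contradiction that $Q$ is a $B$-bqo but $Q^{<\omega}$ is not a $B$-bqo. By the preceding lemma relating bad functions to paths through the tree of bad partial arrays, there is an ill-founded tree $T$ of bad partial arrays in $B\times Q^{<\omega}$. Equip $B\times Q^{<\omega}$ with the well-founded partial order $(b,\sigma)\prec(b',\sigma')$ iff $|\sigma|<|\sigma'|$, extended to sequences in the usual way used in the definition of the minimal path principle.

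First I would fix, in advance, a well-ordering $\alpha$ at least as large as the ordinal $\alpha_0$ supplied by the effective clopen Ramsey theorem for pairs applied to $B$, and invoke {\TMPP} to obtain a path $\Lambda$ through $T$ such that no path through $T$ which is $\Sigma_\alpha$ in $T\oplus\Lambda$ is $\prec$-below $\Lambda$. Because $f^\Lambda$ is bad, no value $f^\Lambda(b)$ can be the empty sequence (else $\langle\rangle\preceq^{<\omega}_Q f^\Lambda(b')$ whenever $b\triangleleft b'$), so write $f^\Lambda(b)=f'(b)^\frown\langle q(b)\rangle$.

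Next I would color the triangle-ordered pairs from $\dom(f^\Lambda)$ by $c(b,b')=0$ iff $q(b)\preceq_Q q(b')$, and apply the effective clopen Ramsey theorem for pairs to obtain an infinite $S\subseteq base(B)$, $\Sigma_{\alpha_0}$ in $c\oplus\Lambda$, on which $c$ is constant over $B\upharpoonright S$. The $B$-bqo hypothesis on $Q$ rules out the constant-$1$ case, since $b\mapsto q(b)$ on the barrier $B\upharpoonright S$ would then be a bad function into $Q$; so $c$ is constantly $0$. I would then build $\Lambda^*$ by enumerating the pairs $(b,f'(b))$ for $b\in B\upharpoonright S$ in non-decreasing order of $\max b$, and verify that $\Lambda^*$ is again a bad partial array: if $b\triangleleft b'$ in $B\upharpoonright S$ and $f'(b)\preceq^{<\omega}_Q f'(b')$, then adjoining $q(b)\preceq_Q q(b')$ would give $f^\Lambda(b)\preceq^{<\omega}_Q f^\Lambda(b')$, contradicting badness of $\Lambda$. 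Each $f'(b)$ is strictly shorter than $f^\Lambda(b)$, so $\Lambda^*\prec\Lambda$; and since $\Lambda^*$ is computable from $S$ together with $\Lambda$, it is $\Sigma_{\alpha_0}$, hence $\Sigma_\alpha$, in $T\oplus\Lambda$, contradicting the relative minimality of $\Lambda$.

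The main obstacle will be the complexity bookkeeping: I need to confirm that the reorganization into a well-formed bad partial array (in particular the third condition in the definition, which forces the enumeration to pick up every $b\in B\upharpoonright base(\{b_0,\ldots,b_k\})$ with $\max b<\max b_k$) does not inflate the complexity of $\Lambda^*$ above $\Sigma_{\alpha_0}$ in $\Lambda$, and that $\alpha_0$ can be chosen uniformly in the coloring so that a single $\alpha$ works at the start. Once that is in hand, Marcone's argument transfers with no further modification.
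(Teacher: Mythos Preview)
Your argument has a genuine gap at the step ``each $f'(b)$ is strictly shorter than $f^\Lambda(b)$, so $\Lambda^*\prec\Lambda$''.  The order $\prec$ on infinite sequences is the Kleene--Brouwer-style extension: $\Lambda^*\prec\Lambda$ requires that $\Lambda^*$ and $\Lambda$ agree on an initial segment and then $\Lambda^*(k)\prec\Lambda(k)$ at the first point of disagreement.  Your $\Lambda^*$ enumerates only the pairs $(b,f'(b))$ for $b\in B\upharpoonright S$, so already $\Lambda^*(0)=(b^*_0,f'(b^*_0))$ for the $\max$-least $b^*_0\in B\upharpoonright S$, while $\Lambda(0)=(b_0,f^\Lambda(b_0))$.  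There is no reason $|f'(b^*_0)|<|f^\Lambda(b_0)|$: the first element of the restricted barrier may carry a much longer sequence than the first element of $\dom(f^\Lambda)$.  Pointwise shortening at matching $b$'s says nothing about the lexicographic comparison at matching \emph{positions}.

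The paper repairs exactly this by not discarding the initial segment of $\Lambda$.  It lets $n$ be least with $b_n\in B\upharpoonright S$, sets $B^*=B\upharpoonright(S\cup base(\{b_i\}_{i<n}))$, and on $B^*\setminus(B\upharpoonright S)$ keeps the original value $f^\Lambda(b)$; this forces $\Lambda'(i)=\Lambda(i)$ for $i<n$ and $\Lambda'(n)=(b_n,g(b_n))\prec(b_n,f^\Lambda(b_n))=\Lambda(n)$, which is what $\Lambda'\prec\Lambda$ actually needs.  The price is a third ``mixed'' case in the badness check (when $b\in B^*\setminus(B\upharpoonright S)$ and $b'\in B\upharpoonright S$ with $b\triangleleft b'$), handled via $g(b')\preceq^{<\omega}_Q f^\Lambda(b')$.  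Your identified ``main obstacle'' (complexity bookkeeping and the third clause of bad partial arrays) is not the real issue; once you patch the construction as above, $\Lambda'$ is still computable from $S$ and $\Lambda$, and the complexity bound goes through as you indicated.
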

\begin{proof} 
Let $B$ be a barrier, and suppose $Q$ is a $B$-bqo.  We set $(b,\sigma)\prec (b',\sigma')$ if $|\sigma|<|\sigma'|$; clearly $\prec$ is a well-order on $B\times Q^{<\omega}$.  Suppose $Q^{<\omega}$ is not a $B$-bqo; then let $\Lambda$ be a relatively minimal infinite sequence through the tree of bad partial arrays from $B$ to $Q^{<\omega}$.

  Clearly $f^\Lambda(b)\neq\langle\rangle$ for all $b$, so we may write $f^\Lambda(b)=g(b)^\frown\langle q(b)\rangle$ for all $b\in\dom(f^\Lambda)$.  For $b\triangleleft b'$, define $c(b,b')=0$ iff $q(b)\preceq_Q q(b')$ and $c(b,b')=1$ otherwise.  By the previous lemma, there is an infinite $S\subseteq \dom(f^\Lambda)\subseteq B$ such that $c$ is constant on $B\upharpoonright S$---that is, $q$ restricted to $S$ is either bad or perfect.  Since $Q$ is a $B$-bqo, $c$ must be constantly $0$, so $q\upharpoonright (B\upharpoonright S)$ is perfect.

For each $n$, write $\Lambda(n)=(b_n,\sigma_n)$.  Let $n$ be least such that $b_n\in B\upharpoonright S$, and define $B^*=B\upharpoonright (S\cup base(\{b_i\}_{i<n}))$.  If $b_i\in B\upharpoonright S$, define $\Lambda'_0(i)=(b_i,g(b_i))$, and if $b_i\in B^*\setminus (B\upharpoonright S)$, define $\Lambda'_0(i)=(b_i,f(b_i))$; if neither of these apply, $\Lambda'_0(i)$ is undefined.  Let $\Lambda'$ be the infinite sequence defined recursively by $\Lambda'(i)=\Lambda'_0(j)$ where $j$ is least such that $\Lambda'_0(j)$ is defined and there is no $i'<i$ with $\Lambda'(i')=\Lambda'_0(j)$.

Observe that for $i<n$, $\Lambda'(i)=\Lambda(i)$ (since by construction, for $i<n$, $b_i\in B^*\setminus (B\upharpoonright S)$), and that $\Lambda'(n)\prec\Lambda(n)$ (since $\Lambda'(n)=(b_n,g(b_n))$ while $\Lambda(n)=(b_n,g(b_n)^\frown\langle q(b_n)\rangle)$).

We now show that $\Lambda'$ is an infinite path through the tree of bad sequences.  Since $\dom(f^{\Lambda'})=B^*$ is a barrier, we need only show that $f^{\Lambda'}$ is bad.  Suppose $\Lambda'(i)=(b,\sigma)$, $\Lambda'(j)=(b',\sigma')$, and $b\triangleleft b'$.

We consider three cases.  If $b\in B\upharpoonright S$ and $b'\in B\upharpoonright S$ then $\sigma=g(b)$, $\sigma'=g(b')$.  Since $q\upharpoonright (B\upharpoonright S)$ is perfect, $q(b)\preceq_Q q(B')$, and since $g(b)^\frown\langle q(b)\rangle\not\preceq^{<\omega}_Q g(b')^\frown\langle q(b')\rangle$, we must have $g(b)\not\preceq^{<\omega}_Q g(b')$.

If $b\in B^*\setminus (B\upharpoonright S)$ and $b'\in B^*\setminus (B\upharpoonright S)$ then $\sigma=f(b)$, $\sigma'=f(b')$, and we have $f(b)\not\preceq^{<\omega}_Q f(b')$.

Observe that for any $b\in B\upharpoonright S$, $\max (base(B^*)\setminus S)<\max b$.  In particular, this means that it is not possible to have $b\in B\upharpoonright S$ but $b'\in B^*\setminus (B\upharpoonright S)$.  The remaining case is that $b\in B^*\setminus (B\upharpoonright S)$ while $b'\in B\upharpoonright S$.  In this case we have $\sigma=f(b)$ while $\sigma'=g(b')$.  Since $g(b')\preceq^{<\omega}_Qf(b')$ and $f(b)\not\preceq^{<\omega}_Qf(b')$, we have $f(b)\not\preceq^{<\omega}_Qg(b')$.

Therefore $\Lambda'$ is an infinite sequence through the tree of bad partial arrays and $\Lambda'\prec\Lambda$.

It remains to check that the proof just given goes through in {\TLPP}.  It suffices to show that for each $B$, there is an $\alpha$ such that $\Lambda'$ is $\Sigma_\alpha$ in $\Lambda$.  Since $\Lambda'$ is computable from the set $S$, this follows from the fact that the coloring $c$ is computable from $\Lambda$ and there is an $\alpha$ such that $S$ is always $\Sigma_\alpha$ in $c$.
\end{proof}

\begin{corollary} 
{\NWT} holds in {\TLPP}.
\label{tmpp_nwt}
\end{corollary}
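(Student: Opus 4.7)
The plan is to derive Corollary \ref{tmpp_nwt} as an immediate combination of three ingredients already available in the paper. First, I would observe that {\TLPP} extends {\RLPP{0}} (in fact {\RLPP{\alpha}} for every well-ordering $\alpha$), so by Theorem \ref{rlpp_atr} the base theory {\ATR} is available; this is precisely the setting in which Marcone's equivalence between {\GHT} and {\NWT} was formulated. Second, Theorem \ref{tmpp_ght} has just established that {\GHT} holds in {\TLPP}. Third, I would invoke Marcone's theorem, cited in the paper as the statement that over {\ATR} the Generalized Higman Theorem implies (indeed is equivalent to) the Nash-Williams Theorem.

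Putting these together, the proof is two lines: working in {\TLPP}, we have {\ATR} and we have {\GHT}, hence by Marcone's equivalence we have {\NWT}. There is no real obstacle here; the corollary is stated precisely so that the hard combinatorial content is isolated in Theorem \ref{tmpp_ght}, and the present step is only the bookkeeping of reducing bqo-style statements about $\tilde Q$ to the $B$-bqo statement about $Q^{<\omega}$, which is exactly what Marcone already did. In writing it up, I would be careful only to note explicitly that the appeal to Marcone's equivalence is legitimate because {\TLPP} proves {\ATR}, so the equivalence is available in our base theory.
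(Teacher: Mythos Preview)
Your proposal is correct and matches the paper's approach exactly: the corollary is stated without proof in the paper, being immediate from Theorem \ref{tmpp_ght} together with Marcone's equivalence of {\GHT} and {\NWT} over {\ATR}, the latter being available since {\TLPP} proves {\ATR} via Theorem \ref{rlpp_atr}.
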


\subsection{Menger's Theorem}
In this subsection, we discuss a theorem about graphs.  When $G$ is a graph, we write $V(G)$ for the set of vertices and $E(G)$ for the set of edges.

\begin{definition} 
If $G$ is a graph and $A\subseteq V(G), B\subseteq V(G)$, an \emph{$A\mhyphen B$ path} is a finite sequences of vertices $v_0,\ldots,v_n$ such that $v_0\in A$, $v_n\in B$, and for each $i<n$, $(v_i,v_{i+1})\in E(G)$.

An \emph{$A\mhyphen B$ separator} is a set $C\subseteq V(G)$ such that every $A\mhyphen B$ path in $G$ contains an element of $C$.
\end{definition}

Menger's Theorem for countable graphs is:
\begin{theorem} 
For any $G$ and any $A,B\subseteq V(G)$, there is a set $M$ of disjoint $A\mhyphen B$ paths and an $A\mhyphen B$ separator $C$ such that $C$ consists of exactly one vertex from each path in $M$.
\end{theorem}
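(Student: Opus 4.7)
The plan is to follow Shafer's proof from \cite{MR2899698}, adapting it to work in {\TLPP} by invoking Theorem \ref{models_of_dc}. Shafer's argument is structured around Aharoni--Berger--Ziv's notion of a wave and a transfinite iteration of wave augmentation, in which at each stage one works in the residual graph obtained from the previous wave. In \Pioo{} this transfinite recursion is handled directly via $\Pi^1_1$ comprehension; our strategy is to replace it with a single application of the relative leftmost path principle, using the fact that within a model of {\SDC} the step-by-step construction can be carried out as ordinary dependent choice.

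First, I would encode the search for a global wave as a path through an ill-founded tree $T$ of finite approximations to a pair $(M, C)$, where $M$ is a set of pairwise disjoint paths starting from $A$ and $C$ is a candidate separator selecting exactly one vertex from each path in $M$. The tree $T$ is ill-founded because the trivial approximation (with $M=\emptyset$ and $C$ an initial segment of $A$) extends to an infinite path. I would equip $T$ with a partial order $\prec$ on approximations reflecting Aharoni's ordering on waves (roughly, $(M, C) \prec (M', C')$ when $(M, C)$ is a ``larger'' wave in Aharoni's sense), and reduce to the leftmost case via Theorem \ref{tmpp_is_tlpp} if convenient. The order $\prec$ is well-founded because the set of waves forms a well-founded partial order under the appropriate notion of extension.

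Next, I would apply Theorem \ref{models_of_dc} to obtain a countably coded $\omega$-model $N$ satisfying \SDC{} and containing $G$, $A$, $B$, and $T$, in which there is a $\prec$-minimal path $\Lambda$ through $T$. Inside $N$, $\Lambda$ codes a pair $(M_0, C_0)$. I would then invoke Shafer's argument within $N$: if $C_0$ were not an $A$-$B$ separator, then an $A$-$B$ path avoiding $C_0$ would exist in $N$, and the wave-augmentation construction would produce some $(M_1, C_1)$ with $(M_1, C_1) \prec (M_0, C_0)$, contradicting the $N$-internal minimality of $\Lambda$. The wave-augmentation step requires only the resources of {\SDC} (since each individual augmentation is an arithmetic operation iterated along a well-order in the sense of $N$), and hence can be carried out in $N$. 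The conclusion that $C_0$ is an $A$-$B$ separator then transfers externally, because $G$, $M_0$, and $C_0$ are absolute sets coded by $\Lambda$ and the statement that $C_0$ is an $A$-$B$ separator is $\Pi^1_1$ in these sets.

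The main obstacle is verifying that Shafer's proof really does go through in {\SDC} once a designated $\prec$-minimal path is available, rather than requiring the full strength of \Pioo. The essential point is that the transfinite iteration of wave augmentation in Shafer's original argument is exactly what the minimality of $\Lambda$ makes unnecessary: instead of iterating until stabilization, we appeal to minimality to conclude that $\Lambda$ already codes a stable configuration, and the individual augmentation steps that remain are within reach of \SDC. A secondary difficulty is choosing the coding of finite approximations in $T$ and the order $\prec$ carefully, so that $\Lambda$ codes a genuine global wave (rather than merely a consistent limit of finite data), and so that the wave-augmentation procedure, applied inside $N$ to a path that fails to be a separator, produces an object which is $\prec$-strictly below $\Lambda$ in the sense of $N$.
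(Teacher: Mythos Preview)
Your overall plan---reduce to Theorem \ref{models_of_dc} and then invoke Shafer's argument inside the resulting model of \SDC---is exactly the paper's strategy, but the object you encode in the tree is wrong, and this misidentifies where the real work lies. The paper does not encode pairs $(M,C)$ with $M$ a family of disjoint paths starting in $A$ and $C$ a choice of one vertex per path; it encodes \emph{waves}, i.e.\ warps $W$ with $ter(W)$ already an $A\mhyphen B$ separator. The technical content of the paper's contribution is a lemma (in \ACA) exhibiting an ill-founded tree $T$, a well-founded $\prec$, and a computable bijection $\pi$ between waves and paths through $T$ with $W\leq W'\Rightarrow \pi(W')\preceq\pi(W)$. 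Theorem \ref{models_of_dc} then yields a model of \SDC{} containing a $\leq$-\emph{maximal} wave, and one simply quotes Shafer's second lemma (already in \ACA): from such a model with a maximal wave, the Menger conclusion follows.

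Your version has two concrete problems. First, you assert $\prec$ is well-founded ``because the set of waves forms a well-founded partial order under the appropriate notion of extension''; but there is no such order sitting there for free, and the paper's lemma is precisely the construction of a well-founded $\prec$ (using finite data: for each vertex $g_i$ the witnessing path in $W$, and for each $A\mhyphen B$ path $p_i$ the finite set $V(p_i)\cap V(W)$, ordered by reverse inclusion). Second, and more seriously, your contradiction argument inside $N$ targets the wrong failure mode. In your encoding, even once $C_0$ is a separator you are not done: the paths in $M_0$ only start in $A$, they need not reach $B$, and the genuinely hard step in Shafer's proof---the one that actually uses \SDC{} in the model---is extending the paths of a maximal wave forward to $B$, not certifying that $ter(W)$ separates (which is the definition of a wave). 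So ``wave augmentation would give $(M_1,C_1)\prec(M_0,C_0)$'' is not the right lever; the minimality is used to guarantee maximality of the wave, after which Shafer's lemma handles the extension to full $A\mhyphen B$ paths.
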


The proof uses the following notions:
\begin{definition} 
A \emph{warp} in $(G,A,B)$ is a subgraph $W$ of $G$ such that:
\begin{itemize} 
  \item $A\subseteq V(W)$,
  \item $W$ is a union of disjoint paths beginning in $A$.
\end{itemize}

If $W$ is a warp in $(G,A,B)$, $ter(W)$ is the set of vertices in $V(W)$ which are the terminal elements of paths beginning in $A$.

A warp $W$ is a \emph{wave} if $ter(W)$ is an $A\mhyphen B$ separator.

We order waves by $W\leq Y$ if $W$ is a subgraph of $Y$.
\end{definition}
It will be convenient to assume that our warps and waves do not contain elements of $A$ except as the first element of a path.

Shafer \cite{MR2899698} shows that Menger's Theorem for countable graphs is provable in {\Pioo}, and our treatment of Menger's Theorem follows his paper.  We will show
\begin{theorem} [\TLPP]
\label{tmpp_menger}
Menger's Theorem for countable graphs holds.
\end{theorem}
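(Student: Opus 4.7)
The plan is to follow Shafer's proof in \cite{MR2899698}, using Theorem \ref{models_of_dc} to replace applications of $\Pioo$. Shafer's argument has two ingredients: the existence of a maximal wave in $(G,A,B)$, which he obtains via $\Pi^1_1$ comprehension, and a combinatorial construction, essentially using $\SDC$, that builds the Menger decomposition from such a wave. I would supply the first ingredient by applying Theorem \ref{models_of_dc} to an appropriate tree, and then carry out the second ingredient inside the resulting $\omega$-model.

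First I would encode partial waves in $(G,A,B)$ as sequences in a tree $T$: a path through $T$ specifies, for each vertex $v \in V(G)$, whether $v$ lies in the coded wave and, if so, which neighbour succeeds it along the path from $A$. Define a well-founded partial order $\prec$ on these sequences so that adding a new vertex to the wave (or extending an existing path further) corresponds to moving to the left of $\prec$. Since the trivial warp consisting of $A$ alone is already a wave, $T$ is ill-founded.

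Applying Theorem \ref{models_of_dc} yields a countably coded $\omega$-model $M$ of $\SDC$ such that $T \in M$ and $M$ sees a $\prec$-minimal path through $T$; by construction this path codes a maximal wave $W \in M$. Inside $M$, Shafer's combinatorial argument now proceeds: one considers the residual graph obtained from $G$ by removing $\mathrm{ter}(W)$, uses $\SDC$ to construct a linkage in this residual graph (the maximality of $W$ prohibits any further non-trivial wave in the residual, which is exactly what Menger's classical argument requires), and concatenates this linkage with $W$ to obtain the desired decomposition. Since the conclusion of Menger's Theorem is arithmetic in the parameters and $M$ is an $\omega$-model, the witnesses furnished by $M$ are genuine witnesses in the ambient model.

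The main obstacle will be arranging the encoding so that $\prec$-minimality really does force maximality of the coded wave. The natural order of waves by subgraph inclusion is not linear, so the encoding must be chosen so that strictly enlarging the wave strictly moves the coding path to the left. Because waves are closed under direct limits of increasing chains, such an encoding exists, but care is needed both to keep $\prec$ well-founded and to ensure that the $\prec$-minimal path inside $M$ genuinely corresponds to a wave which is maximal (not merely $\prec$-locally minimal) in the sense required by Shafer's subsequent combinatorics.
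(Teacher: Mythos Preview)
Your proposal is correct and follows essentially the same approach as the paper: reduce to Shafer's two-lemma decomposition, invoke Theorem~\ref{models_of_dc} to obtain the $\omega$-model of $\SDC$ containing a $\prec$-minimal path, and then run Shafer's combinatorial construction inside that model. The paper handles the ``main obstacle'' you flag by a specific encoding: even positions record, for each vertex $g_i$, either $(0,0)$ (not in the wave) or $(1,q_i)$ (in the wave, with $q_i$ the path from $A$ to $g_i$), while odd positions record, for each $A\mhyphen B$ path $p_i$, the set $V(p_i)\cap V(W)$; the order $\prec$ then puts $(1,q)\prec(0,0)$ and $(i+2,S)\prec(i+2,S')$ when $S'\subsetneq S$, which is well-founded since each $|V(p_i)|$ is finite and makes $W<W'$ imply $\pi(W')\prec\pi(W)$.
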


His proof is split into two parts:
\begin{lemma} [\Pioo]
For any graph $G$ and sets $A,B\subseteq V(G)$, there is a countably coded $\omega$-model $M$ of $\SDC$ containing $G,A,B$ such that $M$ believes there is a maximal (with respect to $\leq$) wave $W$.
\end{lemma}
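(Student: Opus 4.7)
The plan is to apply Theorem \ref{models_of_dc} to a carefully chosen tree $T$ and well-founded order $\prec$ designed so that $\prec$-minimal paths through $T$ correspond to maximal waves of $(G,A,B)$. Fix computable enumerations of $V(G)$ and $E(G)$ and code a subgraph $W$ of $G$ as an infinite $\{0,1\}$-sequence giving the characteristic function of $V(W)\cup E(W)$. Let $T$ be the tree of finite binary sequences consistent with the (arithmetically checkable) requirements for being a wave. Concretely, $T$ contains those finite codes $\sigma$ which have not yet been ruled out, where the conditions to check are: (a) vertices of $A$ are eventually declared to be in $V(W)$; (b) each non-$A$ vertex declared to be in $V(W)$ is attached to a unique declared edge of $E(W)$ to a declared parent vertex nearer $A$, so the warp-of-disjoint-paths structure is maintained; (c) for every finite $A\mhyphen B$ walk $P$ in $G$, if $\sigma$ has already committed membership or non-membership for every vertex and edge of $P$, then $\sigma$ must contain a terminal of $W$ lying on $P$. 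This tree exists in \ACA{}, is uniformly computable from $G,A,B$, and is ill-founded because the trivial warp $(A,\emptyset)$---for which $A$ itself trivially separates $A$ from $B$---is a wave and gives an infinite path.

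Now define the well-founded order by $1\prec 0$ on $\{0,1\}$, extended to sequences as in Section 2. If $W'\supsetneq W$ are both waves, then the codes agree up to the first position $i$ where $W$ has $0$ and $W'$ has $1$, so $\Lambda_{W'}\prec\Lambda_W$; consequently any $\prec$-minimal infinite path through $T$ decodes to a maximal wave. Applying Theorem \ref{models_of_dc} yields a countably coded $\omega$-model $M$ of \SDC{} with $T,G,A,B\in M$ such that $M\models$ ``$T$ has a $\prec$-minimal path $\Lambda$.'' Inside $M$, one uses arithmetic comprehension to decode $\Lambda$ as a wave $W$; if $M$ believed that there were a wave $W'\supsetneq W$, then $M$ would produce the corresponding path $\Lambda_{W'}\prec\Lambda$ through $T$, contradicting minimality. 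Hence $M$ believes $W$ is maximal with respect to $\leq$.

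The main obstacle is the encoding in Step 1: I must ensure that every infinite path through $T$ truly decodes to a wave, not merely to a ``locally wave-like'' subgraph. Properties (a) and (b) above are finitary, but the separator requirement on $ter(W)$ is genuinely $\Pi^0_1$. The trick is condition (c): by scheduling, for each $A\mhyphen B$ walk $P$ in $G$, a deadline---the stage by which membership in $W$ has been decided for every vertex and edge of $P$---and requiring that by that deadline some terminal of $W$ has been declared on $P$, the separator property is enforced in the limit while $T$ itself remains arithmetic. Once this tree is set up correctly, $\prec$-minimality translates into maximality by a straightforward first-difference argument, and the conclusion follows directly from Theorem \ref{models_of_dc}.
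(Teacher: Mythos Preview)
Your overall strategy---encode waves as paths through a tree with a well-founded order so that larger waves give $\prec$-smaller paths, then invoke Theorem~\ref{models_of_dc}---is exactly what the paper does. The problem is that your binary characteristic-function encoding is too thin to support conditions (b) and (c). For (b), you ask that a non-$A$ vertex declared in $V(W)$ already have a declared parent edge to something ``nearer $A$''; but the enumeration of $V(G)\cup E(G)$ is fixed in advance, and in a genuine wave $W'$ a vertex may well be listed before its parent edge, so $\Lambda_{W'}$ would violate (b) at some finite stage and fail to be a path through $T$---which kills the minimality step. For (c), ``a terminal of $W$ lies on $P$'' is $\Pi^0_1$ in the characteristic function (no outgoing $E(W)$-edge exists), and a finite $\sigma$ cannot tell you which declared vertex is genuinely terminal; even ``currently terminal'' requires knowing which incident edge is outgoing, i.e., the warp's orientation, which the raw bit-string does not carry.

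The paper's encoding records more: at position $2i$ it stores either $(0,0)$ or the \emph{entire} path $q_i$ from $A$ to $g_i$ inside $W$, and at position $2i{+}1$ the finite set $S_i=V(p_i)\cap V(W)$ for the $i$-th $A\mhyphen B$ path $p_i$. The tree conditions demand that the $q_i$'s cohere as a warp and that for each $j$ some $g_i\in S_j$ currently has no $q_k$ properly extending $q_i$. Because ``having been extended'' is monotone and each $S_j$ is finite, some element of $S_j$ is never extended and is therefore a true terminal on $p_j$; and because each $q_i$ is declared in one piece, the ordering obstruction in your (b) disappears. With the order $(1,q)\prec(0,0)$ and $(i{+}2,S)\prec(i{+}2,S')$ when $S'\subsetneq S$, one checks that $W\subsetneq W'$ gives $\pi(W')\prec\pi(W)$, and Theorem~\ref{models_of_dc} finishes.
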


\begin{lemma} [\ACA]
If $M$ is a countably coded $\omega$-model of $\SDC$ containing $G,A,B$ and $M$ believes there is a maximal wave $W$ then the conclusion of Menger's Theorem holds for $G,A,B$.
\end{lemma}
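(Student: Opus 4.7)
The plan is to carry out Shafer's original proof of Menger's Theorem inside the model $M$. Reasoning inside $M$, we have a maximal wave $W$ for $(G,A,B)$ together with the full strength of $\SDC$. Shafer's argument then yields a system $L$ of pairwise disjoint $ter(W)$--$B$ paths in the residual graph $G - (V(W) \setminus ter(W))$, with $ter(W)$ being exactly the set of starting vertices of paths in $L$. The construction of $L$ proceeds by a transfinite recursion driven by $\SDC$: at each stage one either adjoins a new $ter(W)$--$B$ path avoiding the previously chosen paths, or extracts an obstruction that, together with $W$, assembles into a wave strictly larger than $W$, contradicting maximality.

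Given $L$, form $\mathcal{M}$ by concatenating, for each $p \in L$ starting at $v \in ter(W)$, the unique path of $W$ from some $a \in A$ to $v$ with $p$ itself. Because the paths of $W$ are pairwise disjoint and the interiors of the paths of $L$ lie in $G - (V(W) \setminus ter(W))$, the resulting $\mathcal{M}$ is a set of pairwise disjoint $A$--$B$ paths. Setting $C = ter(W)$, the set $C$ is an $A$--$B$ separator (because $W$ is a wave) and each path of $\mathcal{M}$ meets $C$ in exactly its unique $ter(W)$-vertex. Hence the conclusion of Menger's Theorem holds inside $M$, witnessed by $\mathcal{M}, C \in M$.

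To transfer the conclusion outside $M$, observe that $\mathcal{M}$ and $C$ are objects in $M$, and the statement ``$\mathcal{M}$ is a set of pairwise disjoint $A$--$B$ paths in $G$ and $C$ is an $A$--$B$ separator meeting each path of $\mathcal{M}$ in exactly one vertex'' is arithmetic in the parameters $G, A, B, \mathcal{M}, C$. Since $M$ is an $\omega$-model containing all of these parameters and sharing the standard integers of the ambient universe, this arithmetic statement is absolute between $M$ and the ambient model. Therefore $\mathcal{M}$ and $C$ witness Menger's Theorem for $(G,A,B)$ in \ACA.

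The main obstacle lies in the internal construction of the linkage $L$: verifying that Shafer's appeal to $\SDC$ genuinely suffices to run the transfinite recursion, and that the failure of extension at any stage produces a wave properly extending $W$ and so contradicts its maximality. Once this internal argument is reproduced, the remainder of the proof is a standard application of arithmetic absoluteness for $\omega$-sub-models, which requires no further ideas.
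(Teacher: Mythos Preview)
The paper does not give its own proof of this lemma; it simply attributes the result to Shafer \cite{MR2899698} and then focuses exclusively on replacing the first of the two lemmas (the {\Pioo} one) by a {\TLPP} argument.  So there is no in-paper proof to compare your proposal against.

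Your proposal is the right shape and matches what the paper implicitly relies on: run Shafer's construction of the linkage inside $M$ using the maximal wave together with $\SDC$, then export the resulting witnesses $\mathcal{M},C$ to the ambient model by arithmetic absoluteness for $\omega$-submodels.  The absoluteness step is correct as stated: once $\mathcal{M}$ and $C$ are fixed, ``$\mathcal{M}$ is a set of pairwise disjoint $A\mhyphen B$ paths,'' ``$C$ is an $A\mhyphen B$ separator,'' and ``$C$ meets each path of $\mathcal{M}$ exactly once'' are all $\Pi^0_1$ (hence arithmetic) in the parameters, since $A\mhyphen B$ paths are finite sequences.  So this part needs no further work.

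The only caveat is that you have (as you acknowledge) black-boxed the internal argument: you assert that $\SDC$ suffices to build the $ter(W)\mhyphen B$ linkage $L$ and that failure at any stage yields a strictly larger wave.  That is precisely the content of Shafer's paper, and the present paper treats it the same way---as a citation, not as something to be re-proved.  If you intend your write-up to be self-contained you would need to reproduce that argument; if you are matching the paper's level of detail, a citation to Shafer is exactly what is expected here.
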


It suffices to show that the first lemma can be proven in {\TLPP}; by \ref{models_of_dc}, we need only show the following:
\begin{lemma} [\ACA]
Let $G,A,B$ be given.  There is an ill-founded tree $T$, a well-ordering $\prec$, and a computable bijection $\pi$ between waves and paths through $T$ such that whenever $W\leq W'$, $\pi(W')\preceq\pi(W)$.
\end{lemma}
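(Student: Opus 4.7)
The plan is to encode each wave $W$ as an infinite sequence $\pi(W) \in \mathbb{N}^\omega$ in such a way that (a) the image of $\pi$ is a $\Pi^0_1$-definable subset of $\mathbb{N}^\omega$, so that the tree $T$ can be constructed computably, and (b) the ordering on encodings reverses the subgraph ordering on waves. Fix a computable bijection $h:V(G)\to\mathbb{N}$ and write $v_i=h^{-1}(i)$. For each wave $W$, devote positions $3i,3i+1,3i+2$ to the vertex $v_i$: record $0$ at position $3i$ if $v_i \in V(W)$ (else $1$); record the predecessor's $h$-index at position $3i+1$ if $v_i \in V(W)\setminus A$ (else $0$); and record the level $\ell$ of $v_i$ (distance to the $A$-endpoint of its path) at position $3i+2$ if $v_i \in V(W)$ (else $0$). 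Including the explicit level data is the key trick: it converts the a-priori $\Pi^0_2$ backward-well-foundedness condition into a local constraint (a position at level $\ell \geq 1$ must name a predecessor at level $\ell - 1$), because levels strictly decrease along predecessor chains.

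Given this encoding, the conditions for $\Lambda$ to equal $\pi(W)$ for some wave $W$ are all $\Pi^0_1$ in $\Lambda$: local consistency at each position; a disjointness requirement (no two distinct positions name the same predecessor at level $\geq 1$); and the separator clause, ``for every $A$--$B$ path $p$ in $G$, some $v \in p$ is terminal in the warp coded by $\Lambda$''. This last clause, $\forall p\,\exists v\in p\,[v\in V(W)\wedge\forall j\,P_v(j,\Lambda)]$, is $\Pi^0_1$ because the bounded disjunction commutes past the inner universal: $\bigvee_{v\in p}\forall j\, P_v(j)\equiv\forall (j_v)_{v\in p}\bigvee_{v\in p} P_v(j_v)$. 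Let $T$ be the (computable in $G,A,B$) tree of all finite $\sigma$ that do not falsify any bounded instance of these clauses; by the standard correspondence, the paths through $T$ are exactly the wave encodings. The tree exists in \ACA{}, is ill-founded (the trivial wave $W_0=(A,\emptyset)$ is a wave since $ter(W_0)=A$ separates $A$ from $B$, so $\pi(W_0)$ is an infinite path through $T$), and $\pi$ is a computable bijection between waves and paths through $T$ (injective since it determines $V(W)$ and the edges, surjective since every path recovers a wave).

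Finally, taking $\prec$ to be the usual well-order $<$ on $\mathbb{N}$, the induced lexicographic order on sequences satisfies $\pi(W')\preceq\pi(W)$ whenever $W\leq W'$: any $v_i\in V(W)$ is also in $V(W')$ with identical predecessor and level (since each vertex has at most one in-edge in a warp and $E(W)\subseteq E(W')$), so the two encodings agree at positions $3i,3i+1,3i+2$; at the earliest differing position, necessarily some $3i$ with $v_i\in V(W')\setminus V(W)$, we have $\pi(W')(3i)=0<1=\pi(W)(3i)$. The main technical point is the $\Pi^0_1$-ness of the separator clause (where the quantifier-swap trick is essential); once this is established, the rest is bookkeeping.
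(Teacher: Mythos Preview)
Your argument is correct and takes a genuinely different route from the paper's. The paper encodes a wave by recording, at even positions $2i$, either $(0,0)$ (meaning $g_i\notin V(W)$) or $(1,q_i)$ where $q_i$ is the \emph{entire} $A$-to-$g_i$ path in $W$; at odd positions $2i+1$ it records the pair $(i+2,V(p_i)\cap V(W))$, an explicit witness set for the separator condition on the $i$-th $A\mhyphen B$ path $p_i$. The well-founded order is then tailored to these entries: $(1,q)\prec(0,0)$ and $(i+2,S)\prec(i+2,S')$ iff $S'\subsetneq S$. By contrast, you store only the local data (predecessor index and level), rely on the level tags to make ``each vertex reaches $A$'' a local constraint, and---this is the main technical difference---handle the separator clause by the bounded-disjunction/universal swap rather than by carrying witness sets $S_i$ along the sequence.

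What each approach buys: the paper's explicit witness sets make the separator condition manifestly a tree condition without any quantifier manipulation, at the cost of a bespoke well-founded partial order (which is fine, since elsewhere the paper shows \RMPP{\alpha}$\,\Leftrightarrow\,$\RLPP{\alpha}). Your encoding uses only the standard order $<$ on $\mathbb{N}$, so the application downstream is directly an instance of the leftmost path principle rather than the minimal path principle; it is also a bit leaner, since recording predecessor-plus-level rather than the whole path $q_i$ avoids redundancy. One small point worth making explicit in your write-up: the claim that the first differing coordinate between $\pi(W)$ and $\pi(W')$ is necessarily a position $3i$ with $v_i\in V(W')\setminus V(W)$ uses that for warps (under the paper's convention that elements of $A$ occur only as initial vertices) $V(W)=V(W')$ forces $E(W)=E(W')$, since $|E(W)|=|V(W)|-|A|$; this rules out the case where the vertex sets coincide but the edge sets differ.
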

\begin{proof} 
Fix an enumeration $V(G)=\{g_0,g_1,\ldots\}$ and an enumeration $\{p_0,p_1,\ldots\}$ of all $A\mhyphen{}B$ paths in $G$.  We define $T$ to consist of sequences $\langle \delta_0,\ldots,\delta_n\rangle$ such that:
\begin{itemize}
  \item If $k=2i$ then either $\delta_{k}=(0,0)$ or $\delta_{k}=(1,q_i)$ where $q_i$ is a path beginning with $A$ and ending with $g_i$,
  \item If $k=2k+1$ then $\delta_{k}=(i+2,S_i)$ where $S_i$ is a non-empty subset of $V(p_i)$, 
  \item If $q_i$ intersects $q_j$ then either $q_i$ is an end-extension of $q_j$ or $q_j$ is an end-extension of $q_i$,
  \item If $g_i$ appears in $q_j$ then $\delta_{2i}\neq (0,0)$,
  \item If $g_i\in A$ then $\delta_{2i}\neq (0,0)$,
  \item If $g_i\in S_j$ then $\delta_{2i}\neq (0,0)$,
  \item There is some $g_i\in S_j$ such that no path $q_k$ is a proper end-extension of $q_i$.
\end{itemize}

Given an infinite path $\Lambda$, we define a warp $W=\pi^{-1}(\Lambda)=\bigcup_i q_i$.  If $g_i\in A$ then $q_i$ witnesses that $g_i\in V(W)$.  $W$ is, by definition, a union of paths beginning in $A$, and the third condition ensures that distinct paths are disjoint.  To see that $W$ is a wave, observe that for any $A\mhyphen B$ path $p_i$, some element in $S_i$ must be the final element of a path.

Conversely, given a wave $W$, we define a path $\pi(W)$ through this tree as follows:
\begin{itemize} 
  \item If $g_i\in V(W)$ then $\pi(W)(2i)=(1,q_i)$ where $q_i$ is the (unique) path in $W$ beginning in $A$ and ending with $g_i$,
  \item If $g_i\not\in V(W)$ then $\pi(W)(2i)=(0,0)$,
  \item $\pi(W)(2i+1)=(i+2,V(p_i)\cap V(W))$.
\end{itemize}
Since $W$ is a wave, $\pi(W)$ is a path through $T$.

We define $\prec$ by:
\begin{itemize} 
  \item $(1,q)\prec(0,0)$,
  \item $(i+2,S)\prec(i+2,S')$ if $S'\subsetneq S$.
\end{itemize}
To see that this is well-founded, note in $(i+2,S)$, $|S|\leq |V(p_i)|$ is finite.

We must check that if $W< W'$ then $\pi(W')\prec\pi(W)$.  Since $W<W'$, there must be some $g_i\in V(W)\setminus V(W')$; we may assume $g_i$ is the least such.  Clearly $\pi(W')(2i)\prec\pi(W)(2i)$, so we need only check that for $j<i$, $\pi(W')(j)\preceq\pi(W)(j)$.  For $j$ even, by construction and the fact that $i$ was chosen least, $\pi(W)(j)=\pi(W')(j)$.  For $j$ odd, since $V(W')\subseteq V(W)$, we must have $\pi(W)(j)\preceq\pi(W')(j)$ as desired.

To see that $T$ is ill-founded, observe that there is a wave $W$ (specifically, $V(W)=A$ and $E(W)=\emptyset$), and therefore $\pi(W)$ is an infinite path through $T$.
\end{proof}

\section{The Relative Leftmost Path Principle}\label{ordinal_rpp}
In this section we prove:
\begin{theorem}[\ATR] \label{spiti_to_rlpp}\label{rmpp_upper_bound}
If $\alpha$ is well-ordered and a successor then any $\omega$-model satisfying \SPiti{\alpha+2} also satisfies \RLPP{\alpha}.
\end{theorem}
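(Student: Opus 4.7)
The plan is to mimic the proof of Theorem~\ref{spiti_rmpp} throughout, replacing the natural-number rank by transfinite ranks drawn from $\alpha$. The author has deliberately arranged the finite-$n$ argument, and in particular Theorem~\ref{arithmetic_well_founded} and its auxiliary lemmas, so that every step should extend once the corresponding transfinite machinery is in place. I would first extend the notion of formula rank to take values in $\alpha$: basic rank-$(\beta+1)$ formulas are obtained from rank-$\beta$ formulas by prepending a quantifier, while at a limit $\lambda \prec \alpha$ a rank-$\lambda$ formula is one coded via the $H$-sequence $H(\lambda,Y,Z)$ witnessing ``$\Sigma_\lambda$ in $Z$,'' using that the ambient theory is \ATR{} so that the required $H$-sets exist. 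Then define $\mathcal{T}_\beta(T)$, the projections $\pi^\gamma_\beta$, and the relation $\prec^{+1}$ by transfinite recursion along $\alpha$, and define $\widehat{\mathcal{T}_\alpha(T)}$ and $\widehat{\mathcal{T}_\alpha(T)}^+$ exactly as in the finite case. The assumption that $\alpha$ is a successor is used precisely here: the top-level hat construction is phrased in terms of existentially witnessing descending paths, matching the successor clause of ``$\Sigma_\alpha$ in $Z$.''

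Next, define $WF'_\beta$ and $WF_\beta$ by transfinite recursion on $\beta \preceq \alpha$, retaining the successor clauses already given and, at a limit $\lambda$, declaring $WF_\lambda(t)$ to hold iff $WF_\beta(\pi^\lambda_\beta(t))$ holds for every $\beta \prec \lambda$. As in the remark following the finite definition, $WF_\beta$ will sit at complexity essentially $\Pi_{\beta+1}(\Pi^1_1)$, and $\widehat{WF}_\alpha$ at essentially $\Pi_{\alpha+2}(\Pi^1_1)$. Lemmas~\ref{decides_rank_0}--\ref{wf_empty_decrement} transfer without change at successor stages, and one adds a short new lemma at each limit stage using the conjunctive characterization of $WF_\lambda$. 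The analogue of Lemma~\ref{sigma_2_induction} needed in the transfinite version of Theorem~\ref{arithmetic_well_founded} then amounts to $\Pi_{\alpha+2}(\Pi^1_1)$ transfinite induction along a well-founded subtree of $\widehat{\mathcal{T}_\alpha(T)}^+$, which is precisely what \SPiti{\alpha+2} supplies inside the $\omega$-model $M$.

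With all of this in place, the final step reproduces the last paragraph of the proof of Theorem~\ref{spiti_rmpp}: assuming \RLPP{\alpha} fails in $M$ for some ill-founded $T \in M$ and well-founded $\prec \in M$, every path $\Lambda \in M$ through $T$ admits a $\Sigma_\alpha$-in-$T \oplus \Lambda$ path $\Lambda' \prec \Lambda$. Reading off the formulas and their witnesses from $\Lambda$ and $\Lambda'$ produces an infinite path through $\widehat{\mathcal{T}_\alpha(T)}^+$ deciding every decision of rank $\preceq \alpha$, and the transfinite version of Theorem~\ref{arithmetic_well_founded} then forces $T$ to be well-founded, contradicting the choice of $T$. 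The hard part will be the bookkeeping at limit ranks $\lambda \prec \alpha$: one must verify that the coding of rank-$\lambda$ formulas through the $H$-sets is sufficiently absolute that $\mathcal{T}_\lambda(T)$ and $WF_\lambda$ land at the expected levels of the $\Pi_\cdot(\Pi^1_1)$ hierarchy, and that the analogues of Lemmas~\ref{decides_rank_successor} and~\ref{decide_rank_increment} propagate cleanly through limits so the final induction stays inside the fragment governed by \SPiti{\alpha+2}.
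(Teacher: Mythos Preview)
Your overall architecture matches the paper's, but two points deserve attention.

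First, the final paragraph has the logic inverted. From the failure of \RLPP{\alpha} you do \emph{not} build a path through $\widehat{\mathcal{T}_\alpha(T)}^+$; you argue that no such path (deciding every decision) can exist. The correct flow, as in Theorem~\ref{spiti_rmpp}, is: if some path $\Lambda$ through $\widehat{\mathcal{T}_\alpha(T)}^+$ decided every decision, then $\sigma_\Lambda$ would be a path through $T$ with no $\Sigma_\alpha$ path to its left, contradicting the assumed failure of \RLPP{\alpha}. Hence no such $\Lambda$ exists, and only then does the transfinite analogue of Theorem~\ref{arithmetic_well_founded} apply to conclude $T$ is well-founded. As written, your sentence ``Reading off the formulas and their witnesses \ldots{} produces an infinite path through $\widehat{\mathcal{T}_\alpha(T)}^+$ \ldots{} and the transfinite version of Theorem~\ref{arithmetic_well_founded} then forces $T$ to be well-founded'' is self-contradictory: the hypothesis of that theorem is precisely the \emph{absence} of such a path.

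Second, your treatment of limit ranks differs from the paper's and is the place where your sketch is vaguest. The paper does not code rank-$\lambda$ formulas through $H$-sets; instead it enlarges the language by a single unary predicate $V_\lambda$ for each limit $\lambda$, declares $V_\lambda(n)$ to be a basic rank-$\lambda$ formula, and extends the deduction relation by the rule $s\vdash V_\lambda(\lceil\phi\rceil)$ iff $rk(\phi)<\lambda$ and $s\vdash\phi$. This keeps $\mathcal{T}_\gamma(T)$ a tree of finite syntactic objects throughout and makes the complexity bookkeeping you flag as ``the hard part'' essentially automatic. Correspondingly, the paper's limit clause for $WF'$ is simpler than your universal conjunction: since $t$ is finite it has a definite $rk(t)<\lambda$, and one sets $WF'_\lambda(t)\Leftrightarrow WF'_{rk(t)}(\pi^\lambda_{rk(t)}(t))$. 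With these choices Lemmas~\ref{decides_rank_0}--\ref{decide_rank_increment} carry over at successors, one short new argument handles the limit case of the ``deciding $\phi$'' lemma, and the whole induction stays at level $\Pi_{\alpha+1}(\Pi^1_1)$ (indeed the paper remarks that for $\alpha\geq\omega$ only \SPiti{\alpha+1} is needed). Your $H$-set idea could perhaps be made to work, but the $V_\lambda$ device is the key trick that avoids the absoluteness worries you anticipate.
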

We have covered the case where $\alpha$ is finite above, and in the case where $\alpha\geq\omega$, we actually only need \SPiti{\alpha+1}.

Fix $\alpha$ and assume $WO(\alpha)$, and fix a model $M$ satisfying \SPiti{\alpha+1}.  The arguments below are carried out in the external model (of \ATR) concerning the internal model.  We write $\delta,\gamma$ for arbitrary elements of $\field(\alpha)$ and $\lambda$ for limits in $\field(\alpha)$.  We also fix a tree $T$ and an ordering $\prec$ belonging to $M$ such that $M\models WF(\prec)$.  We will import as many definitions as possible from Section \ref{arithmetic_rpp}, since for successor levels our definitions are unchanged.

\begin{definition} 
We define $\mathcal{L}^\alpha$ to be the language $\mathcal{L}$ from above, together with, for each limit $\lambda\in\field(\alpha+1)$, a new predicate $V_\lambda$.    For each $\gamma\in\field(\alpha+1)$, we define the \emph{rank $\gamma$ formulas} and the \emph{basic rank $\gamma$ formulas} inductively by:
\begin{itemize} 
  \item $F(i)=j$ where $i,j$ are terms is a basic rank $0$ formula,
  \item All other atomic formulas are rank $0$ formulas,
  \item If $\phi$ is a rank $\gamma$ formula then $\exists x\phi$ and $\forall x\phi$ are basic rank $\gamma+1$ formula,
  \item For any limit $\lambda\in\field(\alpha+1)$ and any $n$, $V_\lambda(n)$ is a basic rank $\lambda$ formula,
  \item The rank $\gamma$ formulas contain the basic rank $\gamma$ formulas and are closed under $\wedge,\vee,\neg,\rightarrow$.
\end{itemize}

We write $\mathcal{F}_\gamma$ for the collection of basic formulas of rank $\gamma$, $\mathcal{F}_{<\gamma}$ for $\bigcup_{\delta<\gamma}\mathcal{F}_\delta$, and write $rk(\phi)$ for the least $\gamma$ such that $\phi$ is a formula of rank $\gamma$.

Fix a G\"odel coding $\lceil\cdot\rceil$ of $\mathcal{L}^\alpha$.  We define $\vdash$ on $\mathcal{L}^\alpha$ by adding two additional clauses to usual deduction relation for first-order logic:
\begin{itemize} 
  \item $s\vdash V_\lambda(\lceil\phi\rceil)$ iff $rk(\phi)<\lambda$ and $s\vdash\phi$.
  \item If $rk(\phi)\geq\lambda$ then $s\vdash\neg V_\lambda(\lceil\phi\rceil)$.
\end{itemize}
\end{definition}

\begin{definition} 
Let $\alpha$ be a well-ordering.  For each $\gamma\in\field(\alpha+1)$, define $\mathcal{T}_\gamma(T)$ to be the set of consistent, finite sets $s$ of closed basic formulas of rank $\leq \gamma$ such that: 
\begin{itemize} 
  \item If $F(i)=k\in s$ and $i'< i$ then there is a $j'$ such that $F(i')=j'\in s$,
  \item Let $i$ be largest such that for some $j$, the formula $F(i)=j\in s$; then the sequence $\langle F(0),\ldots,F(i)\rangle\in T$,
  \item If $\exists x \phi(x)\in s$ then there is some $i$ such that $s\cap\mathcal{F}_{rk(\phi)}\vdash\phi(i)$,
  \item If $V_\lambda(\lceil\phi\rceil)\in s$ then $rk(\phi)<\lambda$ and $s\cap\mathcal{F}_{\leq rk(\phi)}\vdash\phi$.
\end{itemize}

If $s\in \mathcal{F}_\lambda$, we write $rk(s)=\max\{rk(\phi)\mid \phi\in s\cap\mathcal{F}_{<\lambda}\}$.

We say $s$ decides $V_\lambda(\lceil\phi\rceil)$ if $V_\lambda(\lceil\phi\rceil)\in s$, $V_\lambda(\lceil\neg\phi\rceil)\in s$, or $rk(\phi)\geq\lambda$.

If $\delta\leq \gamma$, define $\pi^\gamma_\delta:\mathcal{T}_\gamma(T)\rightarrow\mathcal{T}_\delta(T)$ by $\pi^\gamma_\delta(s)=\{\phi\in s\mid rk(\phi)\leq \delta\}$.

If $\delta<\lambda$, $t\in\mathcal{T}_\delta(T)$, $s\in\mathcal{T}_\lambda(T)$, we write $t\prec^{+1}s$ if there is a formula $V_\lambda(\lceil\phi\rceil)\in s$ such that $t\vdash\neg\phi$.  
\end{definition}

\begin{definition} 
We extend the definition of $WF'_\gamma\subseteq\mathcal{T}_\gamma(T)$ by adding a definition for limits:
\begin{itemize} 
  \item $WF'_\lambda(t)$ holds if $WF'_{rk(t)}(\pi^\lambda_{rk(t)}(t))$.
\end{itemize}
\end{definition}
Note that, as above, each $WF_\lambda$ is a $\Pi_\lambda(\Pi^1_1)$ formula.

\begin{lemma}
If $WF_\gamma(s)$ and $s\subseteq t$ then $WF_\gamma(t)$.
\end{lemma}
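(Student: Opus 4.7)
The plan is to observe that the proof is essentially identical to the one given for the earlier monotonicity lemma in Section \ref{arithmetic_rpp}: even though the recursive definition of $WF'_\gamma$ has now been extended with a clause for limit ranks $\lambda$, the outer definition of $WF_\gamma(t)$ is unchanged --- it still quantifies universally over all $s \in \mathcal{T}_\gamma(T)$ with $s \supseteq t$. Monotonicity of any property defined by such a universal quantification over supersets is automatic, and the inductive structure of $WF'_\gamma$ plays no role in the argument.

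Concretely, suppose $WF_\gamma(s)$ holds and $s \subseteq t$, with $t \in \mathcal{T}_\gamma(T)$. Let $u \in \mathcal{T}_\gamma(T)$ with $u \supseteq t$ be arbitrary. Then by transitivity $u \supseteq s$, so the hypothesis $WF_\gamma(s)$ directly gives $WF'_\gamma(u)$. Since $u$ was an arbitrary extension of $t$, we conclude $WF_\gamma(t)$.

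The only thing worth sanity-checking is that the extension of the definition of $WF'_\gamma$ to limits does not implicitly depend on $s$ in a non-monotone way --- for instance, through the quantity $rk(s)$ appearing in $WF'_\lambda(s) \equiv WF'_{rk(s)}(\pi^\lambda_{rk(s)}(s))$. But this only affects $WF'_\gamma$ itself, not the outer definition of $WF_\gamma$, and so presents no obstacle to the argument above. There is therefore no genuine obstacle; the lemma really is immediate from the form of the definition, and a one-line proof suffices.
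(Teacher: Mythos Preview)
Your proof is correct and matches the paper's approach: in the finite case the paper simply says ``Immediate, since the definition is monotonic,'' and in the ordinal case it gives no proof at all, tacitly relying on the same observation you spell out. The key point, as you note, is that $WF_\gamma$ is defined by universally quantifying $WF'_\gamma$ over supersets, so monotonicity is automatic regardless of how $WF'_\gamma$ itself is defined.
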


\begin{lemma}
Let $\delta\leq\gamma\leq\alpha$.  Then:
\begin{enumerate}
  \item If $WF_\delta(\pi^\gamma_\delta(s))$ then $WF_\gamma(s)$, and
  \item If $t\in\mathcal{T}_\gamma(T)\cap\mathcal{T}_\delta(T)$ and $WF_\gamma(t)$ then $WF_\delta(t)$.
\end{enumerate}
\end{lemma}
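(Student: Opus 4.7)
The plan is to prove (1) and (2) together by transfinite induction on $\gamma$, with the base case $\gamma=0$ immediate.

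For the successor stage $\gamma=\beta+1$, part (1) is handled just as in Lemma \ref{wf_increment}: the inductive hypothesis for (1) at $\beta$ descends the given $WF_\delta$ premise to $WF_\beta(\pi^{\beta+1}_\beta(s))$, after which that lemma's argument lifts to $WF_{\beta+1}(s)$. Part (2) reduces, via the inductive hypothesis for (2) at $\beta$, to establishing $WF_{\beta+1}(t)\Rightarrow WF_\beta(t)$ for $t\in\mathcal{T}_\beta$; this follows because any $u\supseteq t$ in $\mathcal{T}_\beta$ contains no basic rank-$(\beta+1)$ formulas, hence no $v\prec^{+1}u$, so the premise of $WF'_{\beta+1}(u)$ is vacuous and its conclusion gives $WF_\beta(u)$ outright, just as in Lemma \ref{wf_empty_decrement}.

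For the limit stage $\gamma=\lambda$, part (1) goes through by fixing an extension $s'\supseteq s$ in $\mathcal{T}_\lambda$, setting $\mu=rk(s')$, and splitting on $\delta\leq\mu$ versus $\delta>\mu$. If $\delta\leq\mu$, the projection $\pi^\lambda_\mu(s')$ extends $\pi^\lambda_\delta(s)$, so the inductive hypothesis for (1) at $\mu$ yields $WF_\mu(\pi^\lambda_\mu(s'))$ and hence $WF'_\lambda(s')$. If $\delta>\mu$, the rank constraint on $s'$ (and hence on $s$) forces $\pi^\lambda_\delta(s)=\pi^\lambda_\mu(s)$ to lie in $\mathcal{T}_\mu$, and the inductive hypothesis for (2) at $\delta$ descends $WF_\delta(\pi^\lambda_\delta(s))$ to $WF_\mu(\pi^\lambda_\delta(s))$; monotonicity completes the case.

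The hard part, and the main obstacle, is part (2) at the limit stage. Given $WF_\lambda(t)$ for $t\in\mathcal{T}_\delta$ with $\delta<\lambda$, and an extension $u\supseteq t$ in $\mathcal{T}_\delta$, the direct application yields only $WF'_\lambda(u)=WF'_{rk(u)}(u)$, which matches the required $WF'_\delta(u)$ only when $rk(u)=\delta$. When $\delta$ is $0$ or a limit, a direct unfolding of the definitions shows $WF'_\lambda(u)=WF'_\delta(u)$ for $u\in\mathcal{T}_\delta$, so the mismatch never arises. The genuinely delicate case is $\delta=\beta+1$: I plan to resolve it by a rank-bumping trick, extending $u$ by a basic existential rank-$(\beta+1)$ formula $\psi$ with a tautologically derivable witness (iterated $\exists$ on an atomic base when $\beta$ is finite, $\exists x\,V_\mu(x)$ when $\beta=\mu+1$ for a limit $\mu$, and similar recursive constructions in between) to obtain $u'=u\cup\{\psi\}$ with $rk(u')=\beta+1$. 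Then $WF_\lambda(t)$ forces $WF'_{\beta+1}(u')$; since $\psi$ is existential, the $\prec^{+1}$-premise for $u'$ remains vacuous, and the resulting conclusion---namely $WF_\beta(v)$ for every $v\supseteq\pi^{\beta+1}_\beta(u')=u$ in $\mathcal{T}_\beta$---is precisely the content of $WF'_{\beta+1}(u)$. The one remaining technical point is to exhibit such a $\psi$ uniformly at every successor level $\beta+1\leq\alpha$.
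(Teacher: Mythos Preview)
Your overall induction scheme and the successor cases match the paper's proof closely, and your limit case for (1) is a slightly more careful version of the same argument. The approach is correct throughout.

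The real divergence is in part (2) at a limit $\gamma=\lambda$, where you flag this as ``the hard part'' and introduce a rank-bumping trick. This works, but it is unnecessarily complicated, and the paper's argument avoids it entirely with a single observation you overlooked. The paper does \emph{not} try to establish $WF'_\delta(u)$ directly for arbitrary $u\supseteq t$ in $\mathcal{T}_\delta$; instead it first shows $WF_{rk(t)}(t)$, and then applies part (1) of the induction hypothesis at the pair $(rk(t),\delta)$ (which is available since $\delta<\lambda$) to get $WF_\delta(t)$. The point you missed is that $WF_\lambda(t)\Rightarrow WF_{rk(t)}(t)$ is immediate from the definitions: for any $u\supseteq t$ with $u\in\mathcal{T}_{rk(t)}$, the containment $t\subseteq u$ forces $rk(u)\geq rk(t)$, while $u\in\mathcal{T}_{rk(t)}$ forces $rk(u)\leq rk(t)$, so $rk(u)=rk(t)$ exactly, and $WF'_\lambda(u)=WF'_{rk(u)}(u)=WF'_{rk(t)}(u)$. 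No case split on the shape of $\delta$, no construction of auxiliary formulas.

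Your rank-bumping construction would go through (the inductive construction of tautological existentials at each successor rank is routine, though note your parenthetical ``$\beta=\mu+1$ for a limit $\mu$'' should read ``$\beta=\mu$ a limit''), but it buys nothing: you are working hard to force $rk(u')=\delta$ for an arbitrary extension $u'$, when the paper simply restricts attention to extensions where the rank is pinned down automatically and lets the already-proved part (1) do the lifting.
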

\begin{proof} 
We prove these by simultaneous induction on $\delta,\gamma$.  (This is necessarily an about $M$ carried out externally to $M$.  Note that the statements of these two parts are of the form $\forall x (\phi(x)\rightarrow\psi(x))$ where $\phi,\psi$ are $\Pi_\gamma(\Pi^1_1)$ or $\Pi_\delta(\Pi^1_1)$ with $\delta\leq\gamma\leq\alpha$, so the statement of the theorem is $\Pi_{\gamma+1}(\Pi^1_1)$, and so at worst $\Pi_{\alpha+1}(\Pi^1_1)$.)  Suppose the claim holds for all pairs $\delta'\leq\gamma'$ with either $\delta'<\delta$ or $\gamma'<\gamma$.

If $\gamma=\delta$, this is immediate.  Suppose $\delta<\gamma$ and $\gamma=\beta+1$.  If $WF_\delta(\pi^{\beta+1}_\delta(s))$ then by IH, $WF_{\beta}(\pi^{\beta+1}_{\beta}(s))$, and therefore by Lemma \ref{wf_increment}, $WF_{\beta+1}(s)$.

If $t\in\mathcal{T}_{\beta+1}(T)\cap\mathcal{T}_\delta(T)=\mathcal{T}_\delta(T)$ and $WF_{\beta+1}(t)$ then $t$ contains no formula of rank $\beta+1$, and so there are no $s\supseteq \pi^{\beta+1}_\beta(t)=t$ such that $s\prec^{+1}t$, so we must have $WF_{\beta}(t)$, and therefore by IH, $WF_\delta(t)$.

Suppose $\gamma=\lambda$ is a limit.  If $WF_\delta(\pi^\lambda_\delta(s))$, we consider two cases.  If $rk(s)\geq\delta$ then we have $WF_{rk(s)}(\pi^\lambda_{rk(s)}(s))$ by applying the first part of IH to $\delta,rk(s)$, and therefore $WF_\lambda(s)$.  If $rk(s)<\delta$ then we have $\pi^\lambda_\delta(s)=\pi^\lambda_{rk(s)}(s)$, so by applying the second part of IH to $rk(s),\delta$, we have $WF_{rk(s)}(s)$.

Suppose $t\in\mathcal{T}_\lambda(T)\cap\mathcal{T}_\delta(T)$ and $WF_\lambda(t)$.  Then since $rk(t)\leq\delta$, we may apply the first part of IH to $rk(t),\delta$ to obtain $WF_\delta(t)$.
\end{proof}

\begin{lemma}
Let $\delta\leq\gamma\leq\alpha$, let $\phi$ be a basic rank $\delta$ formula, let $s\in\mathcal{T}_\gamma(T)$, and suppose that for every $t\supseteq s$ such that $t$ decides $\phi$, $WF_\gamma(t)$.  Then $WF_\gamma(s)$.
\label{decides_ordinal}
\end{lemma}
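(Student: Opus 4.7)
The plan is to prove the lemma by transfinite induction on $\gamma\in\field(\alpha+1)$, carried out in the meta-theory (the ambient \ATR-model). Since $WF_\gamma$ is $\Pi_\gamma(\Pi^1_1)$, the induction statement is at worst $\Pi_{\alpha+1}(\Pi^1_1)$, matching the strength available from \SPiti{\alpha+1}. The three cases on $\gamma$---zero, successor, limit---mirror the arithmetic development from Section \ref{arithmetic_rpp}, and the successor case is further split on whether $\delta=\gamma$ or $\delta<\gamma$. In this way the single statement unifies the roles played in the arithmetic section by Lemmas \ref{decides_rank_0}, \ref{decides_rank_successor}, and \ref{decide_rank_increment}, adding a limit case.

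The base case $\gamma=0$ forces $\delta=0$ and $\phi$ to have the form $F(i)=j$, so the claim is exactly Lemma \ref{decides_rank_0} (available since \SPiti{\alpha+1} implies \SPiti{1}). For the successor case $\gamma=\beta+1$: when $\delta=\beta+1$ no $V_\lambda$-formulas appear at this rank, so $\phi$ is $\exists x\psi$ or $\forall x\psi$, and the proof of Lemma \ref{decides_rank_successor} transfers verbatim by examining $s\cup\{\phi\}$ and $s\cup\{\forall x\neg\psi\}$; when $\delta\leq\beta$, the inductive hypothesis at level $\beta$ supplies the conclusion at rank $\beta$, and the lifting from $\beta$ to $\beta+1$ proceeds exactly as in Lemma \ref{decide_rank_increment}.

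The limit case $\gamma=\lambda$ is the substantive one. Using the definition $WF'_\lambda(t)=WF'_{rk(t)}(\pi^\lambda_{rk(t)}(t))$, the goal $WF_\lambda(s)$ unfolds to requiring $WF'_\beta(\pi^\lambda_\beta(s'))$, where $\beta=rk(s')<\lambda$, for every $s'\supseteq s$ in $\mathcal{T}_\lambda(T)$. By the preceding lemma it suffices to show $WF_\beta(\pi^\lambda_\beta(s'))$, and by the inductive hypothesis at level $\beta$ (available provided $rk(\phi)\leq\beta$, which we arrange by choosing $\beta\geq\delta$ in the projection) it suffices to verify that every $u\supseteq\pi^\lambda_\beta(s')$ in $\mathcal{T}_\beta(T)$ deciding $\phi$ satisfies $WF_\beta(u)$. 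Given such $u$, we lift to $\mathcal{T}_\lambda(T)$ by re-adjoining the $V_\lambda$-formulas from $s'$: the point is that each $V_\lambda(\lceil\psi\rceil)\in s'$ is witnessed by a derivation from $s'\cap\mathcal{F}_{\leq rk(\psi)}\subseteq\pi^\lambda_\beta(s')\subseteq u$, so $u$ remains consistent with those formulas and the reassembled $t$ lies in $\mathcal{T}_\lambda(T)$ and still extends $s$. The hypothesis gives $WF_\lambda(t)$, which descends to $WF_\beta(u)$ by the second part of the preceding lemma, closing the induction. The exceptional subcase $\delta=\lambda$ has $\phi=V_\lambda(\lceil\psi\rceil)$ with $rk(\psi)<\lambda$; deciding $\phi$ reduces to deciding $\psi$ (adjoining the $V_\lambda$-formula is forced by the $\mathcal{T}_\lambda$-conditions once $\psi$ or $\neg\psi$ is derivable), which falls back on the $\delta<\lambda$ case just established.

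The main obstacle is the limit case: one must verify that the reassembly procedure---stripping $V_\lambda$-formulas, extending at a lower rank, and then grafting them back---really does produce elements of $\mathcal{T}_\lambda(T)$, and that the projection-based definition of $WF'_\lambda$ is compatible with the premise--conclusion style of $WF'$ at successor levels. Once the bookkeeping about $V_\lambda$-formulas is checked, everything else follows the arithmetic template.
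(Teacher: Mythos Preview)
Your proposal is correct and follows essentially the same strategy as the paper's proof: induction on $\gamma$, with the zero, successor, and limit cases handled respectively by Lemma~\ref{decides_rank_0}, by Lemmas~\ref{decides_rank_successor} and~\ref{decide_rank_increment}, and by the strip/reattach argument for $V_\lambda$-formulas you describe; the paper likewise sets $\beta=\max\{rk(s),rk(\psi)\}$ to ensure $rk(\phi)\leq\beta$ before invoking the inductive hypothesis. One small technicality: in the subcase $\delta=\lambda$ you reduce deciding $V_\lambda(\lceil\psi\rceil)$ to deciding $\psi$ and then invoke the $\delta<\lambda$ case, but $\psi$ need not be a \emph{basic} formula, so the inductive hypothesis does not literally apply to it---the paper handles this by passing to $\exists x\,\psi$ for a fresh variable $x$, which is basic of rank $rk(\psi)+1<\lambda$ and is decided exactly when $\psi$ is.
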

\begin{proof} 
By main induction on $\delta$ and side induction on $\gamma$.  (Note that for a given $\gamma$ the statement is $\Pi_{\gamma+1}(\Pi^1_1)$, so the statement is $\Pi_{\alpha+1}(\Pi^1_1)$.) 
  The case where $\delta=\gamma=0$ is handled by Lemma \ref{decides_rank_0}.  The case where $\delta=\gamma$ and $\gamma$ is a successor is handled by Lemma \ref{decides_rank_successor}.  The case where $\delta<\gamma$ and $\gamma$ is a successor is handled by Lemma \ref{decide_rank_increment}.

So suppose $\gamma$ is a limit.  It suffices to show that if for every $t\supseteq s$ such that $t$ decides $\phi$, $WF_\gamma(t)$, then $WF'_\gamma(s)$.  If $\delta=\gamma$ then $\phi=V_\gamma(\lceil\psi'\rceil)$; in this case, we set $\psi=\exists x\psi'$ for some variable $x$ not appearing in $\psi$; otherwise $\delta<\gamma$ and we set $\psi=\psi'$.  Set $\beta=\max\{rk(s),rk(\psi)\}<\gamma$.

Suppose $t\supseteq \pi^\gamma_\beta(s)$ and $t$ decides $\psi$.  Set
\[t'=\left\{\begin{array}{ll}
t\cup s&\text{if }\delta<\gamma\\
t\cup s\cup\{V(\lceil\psi'\rceil)\}&\text{if }\delta=\gamma\text{ and }t\vdash\psi\\
t\cup s\cup\{V(\lceil\neg\psi'\rceil)\}&\text{if }\delta=\gamma\text{ and }t\vdash\neg\psi\\
\end{array}\right.\]
Then for any $\rho$, $t'\vdash\rho$ iff $t\vdash\rho$, so $t'$ is consistent.  Also, $t'\supseteq s$ and $t'$ decides $\phi$, so $WF_\gamma(t')$.  Since $rk(t')=\beta$ and $\pi^\gamma_\beta(t')=t$, we have $WF_\beta(t)$.  Since $WF_\beta(t)$ holds for all $t\supseteq\pi^\gamma_\beta(s)$ deciding $\psi$, it follows from IH that $WF_\beta(\pi^\gamma_\beta(s))$ holds.  Therefore $WF_\gamma(s)$ holds. 
\end{proof}

\begin{lemma}
If $WF_\gamma(\emptyset)$ then $WF_0(\emptyset)$.
\label{wf_empty_ordinal}
\end{lemma}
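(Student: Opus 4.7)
The plan is to treat this as an essentially immediate corollary of the preceding Lemma. Applying part (2) of that Lemma with $\delta = 0$ and $t = \emptyset$—noting that $\emptyset \in \mathcal{T}_\gamma(T) \cap \mathcal{T}_0(T)$ holds vacuously—directly yields $WF_0(\emptyset)$ from $WF_\gamma(\emptyset)$ for any $\gamma \leq \alpha$.

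If the intended range of $\gamma$ extends to $\alpha+1$ (which is the natural case for the eventual application to \RLPP{\alpha} along the lines of Theorem~\ref{arithmetic_well_founded}), the extra case is handled by first descending one successor level, following the argument of Lemma~\ref{wf_empty_decrement}. Assuming $WF_{\alpha+1}(\emptyset)$, in particular $WF'_{\alpha+1}(\emptyset)$ holds. The premise of $WF'_{\alpha+1}(\emptyset)$ quantifies over $t \supseteq \pi^{\alpha+1}_\alpha(\emptyset) = \emptyset$ with $t \prec^{+1} \emptyset$; but $\emptyset$ contains no formula of the form $V_{\alpha+1}(\lceil \phi \rceil)$ to witness $\prec^{+1}$, so no such $t$ exists and the premise is satisfied vacuously. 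The conclusion then gives $WF_\alpha(t)$ for every $t \supseteq \emptyset$, hence in particular $WF_\alpha(\emptyset)$. A second application of the preceding Lemma finishes the descent to $WF_0(\emptyset)$.

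There is no substantive obstacle here: all of the technical content—the one-step decrements $WF_\gamma(s) \Rightarrow WF_\delta(s)$ for $s$ lying in both levels, together with the transfinite induction on $\delta \leq \gamma$—has already been packaged into the preceding Lemma. The only mild care required is recognizing that $\emptyset$ is the universal common element of all the $\mathcal{T}_\gamma(T)$'s, which is exactly what lets us cascade through every rank in a single step.
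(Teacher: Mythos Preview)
Your proposal is correct but takes a different route from the paper. The paper argues by a fresh transfinite induction on $\gamma$: for $\gamma$ a successor it invokes Lemma~\ref{wf_empty_decrement} to drop one level and applies the inductive hypothesis; for $\gamma$ a limit it uses the definition of $WF'_\lambda$ together with $rk(\emptyset)=0$ to pass directly to $WF'_0(\emptyset)$ (and hence $WF_0(\emptyset)$). You instead observe that the earlier two-part lemma (the one asserting that $WF_\delta(\pi^\gamma_\delta(s))\Rightarrow WF_\gamma(s)$ and conversely that $WF_\gamma(t)\Rightarrow WF_\delta(t)$ for $t\in\mathcal{T}_\gamma(T)\cap\mathcal{T}_\delta(T)$) has already packaged this induction, and that $\emptyset$ lies in every $\mathcal{T}_\gamma(T)$, so a single appeal to part~(2) with $\delta=0$ suffices. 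Your version is shorter and avoids repeating an induction already carried out; the paper's version is more self-contained at this spot. Two minor remarks: the lemma you call ``the preceding Lemma'' is actually two lemmas back (Lemma~\ref{decides_ordinal} sits in between), and in your discussion of the extra $\gamma=\alpha+1$ case the relation $\prec^{+1}$ at a successor level is witnessed by universal formulas rather than $V_\lambda$-atoms (which exist only for limit $\lambda$)---but since $\emptyset$ contains no formulas whatsoever, your conclusion there is unaffected.
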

\begin{proof} 
By induction on $\gamma$.  If $\gamma$ is a successor, this follows immediately from IH and Lemma \ref{wf_empty_decrement}.  If $\gamma$ is a limit then since $rk(\emptyset)=0$, we immediately have $WF_\gamma(\emptyset)$.
\end{proof}

The definition of $\widehat{\mathcal{T}_{\gamma+1}}$ given above for $\gamma$ a successor is unchanged.  In particular, we obtain:
\begin{theorem}
Assume $\alpha$ is a successor.  Suppose there is no infinite path (in $M$) through $\widehat{\mathcal{T}_\alpha(T)}^+$ deciding all decisions of rank $\leq\alpha$.  Then $T$ is well-founded (in $M$).
\end{theorem}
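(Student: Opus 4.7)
The plan is to transpose the proof of Theorem \ref{arithmetic_well_founded} to ordinal level $\alpha$. Since $\alpha$ is a successor, the definition of $\widehat{\mathcal{T}_\alpha(T)}$ and the excluding-condition machinery are formally identical to those used at level $n+1$ in Section \ref{arithmetic_rpp}. Consequently the three basic lemmas about $\widehat{WF}_\alpha$ (monotonicity, the implication $WF_\alpha(s)\Rightarrow\widehat{WF}_\alpha(s,U)$, and the two ``decides'' lemmas for $\widehat{WF}_\alpha$) carry over with unchanged proofs. The new ingredient we need is the transfinite replacement for the ``rank step''; this is supplied by Lemma \ref{decides_ordinal} (allowing us to reduce to deciding a single formula across all levels $\leq\alpha$), Lemma \ref{wf_empty_ordinal} (reducing $WF_\alpha(\emptyset)$ to $WF_0(\emptyset)$), and part (1) of the comparison lemma between $WF_\delta$ and $WF_\gamma$ (lifting $WF_0(\pi^\alpha_0(s))$ to $WF_\alpha(s)$).

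First I would verify the two non-trivial premises of Theorem \ref{arithmetic_well_founded} at level $\alpha$. For the first, $WF_0(\pi^\alpha_0(s))$ gives $WF_\alpha(s)$ via the comparison lemma, whence $\widehat{WF}_\alpha(s,U)$ follows from the transferred basic lemma. For the second, $\widehat{WF}_\alpha(\emptyset,\emptyset)$ yields $WF_\alpha(\emptyset)$ by the direct analog of Lemma \ref{wf_empty_decrement}, and then $WF_0(\emptyset)$ by Lemma \ref{wf_empty_ordinal}. The third premise is the standing hypothesis of the theorem.

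With the premises checked, I would replay the two-part argument. The first part establishes $\widehat{WF}_\alpha(s,U)$ for $(s,U)\in\widehat{\mathcal{T}_\alpha(T)}\setminus\widehat{\mathcal{T}_\alpha(T)}^+$: fixing a $\phi$ with $U(\phi)=0$, if $T_{\sigma_{s,\phi}}$ is ill-founded then $\sigma_{s,\phi}\prec\sigma_s$ already witnesses failure of $WF_0(\pi^\alpha_0(s))$; otherwise a side induction on $\tau\in T_{\sigma_{s,\phi}}$, combined with Lemma \ref{decides_ordinal} applied to $\exists k\,\phi(|\sigma_t|,k)$ (whose rank is $\leq\alpha$ because $\alpha$ is a successor), produces $\widehat{WF}_\alpha(t,V)$ for all relevant extensions. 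The second part handles $\widehat{\mathcal{T}_\alpha(T)}^+$ by a direct transfinite analog of Lemma \ref{sigma_2_induction}: the hypothesis that no path through $\widehat{\mathcal{T}_\alpha(T)}^+$ decides every decision makes the flattened path-coding tree $\mathcal{S}'$ genuinely well-founded, and since $\widehat{WF}_\alpha$ is $\Pi_{\alpha+1}(\Pi^1_1)$, the assumption that $M$ satisfies $\SPiti{\alpha+2}$ is enough to carry out the induction. Combining the two parts gives $\widehat{WF}_\alpha(\emptyset,\emptyset)$, and then $WF_0(\emptyset)$, so $T=T_\emptyset$ is well-founded in $M$.

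The main obstacle is the complexity bookkeeping in the second part: one must confirm that $\widehat{WF}_\alpha$ really does sit at $\Pi_{\alpha+1}(\Pi^1_1)$ rather than leaking up a level, and that the $\mathcal{S}'$-flattening of Lemma \ref{sigma_2_induction} preserves this bound. The ``$+2$'' in $\SPiti{\alpha+2}$ is meant to provide exactly the slack needed to absorb the extra quantifier alternation introduced by the $\widehat{\cdot}$-decoration and by the conversion of the $\Pi_2$-style ``decides every decision'' condition into genuine well-foundedness of $\mathcal{S}'$; once this is checked, the rest is a straightforward transposition of the arithmetic argument.
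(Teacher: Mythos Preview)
Your proposal is correct and follows essentially the same route as the paper: verify the abstract premises of Theorem \ref{arithmetic_well_founded} at level $\alpha$ using the transfinite lemmas (the comparison lemma, Lemma \ref{decides_ordinal}, and Lemma \ref{wf_empty_ordinal}), then rerun the two-part argument, invoking the $\Pi_{\alpha+1}(\Pi^1_1)$ version of Lemma \ref{sigma_2_induction}. One small mislabeling: the step $\widehat{WF}_\alpha(\emptyset,\emptyset)\Rightarrow WF_\alpha(\emptyset)$ is not an analog of Lemma \ref{wf_empty_decrement} but rather part (3) of the basic $\widehat{WF}$ lemma (the ``no $t$ satisfies $t\prec^{+1}(\emptyset,\emptyset)$'' observation); the argument is the same in spirit, so this does not affect correctness.
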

\begin{proof} 
We apply Theorem \ref{arithmetic_well_founded}.  The first two assumptions are given by Lemmas \ref{wf_empty_ordinal} and \ref{decides_ordinal}.  Note that Lemma \ref{sigma_2_induction} is identical for $\Pi_{\alpha+1}(\Pi^1_1)$ formulas.
\end{proof}

\begin{definition} 
Let $\phi$ be a closed formula of $\mathcal{L}^\alpha$.  Then $\hat\phi(X,Y)$ is the formula of second-order arithmetic which interprets the function symbol $F$ by $X$, the predicate symbol $\overline{T}$ by $Y$, and $V_\lambda(\lceil\phi\rceil)$ by $\forall Y \left(H_\theta(\lambda,Y)\rightarrow (\lceil\phi\rceil,rk(\phi))\in Y\right)$ for a suitable formula $\theta$.
\end{definition}

\begin{lemma}
Let $\Lambda$ be a path (in $M$) through $\mathcal{T}_\alpha(T)$ deciding all formulas of rank $\leq \alpha$ and let $\sigma_\Lambda$ be the corresponding path through $T$ given by $\sigma_\Lambda(i)=j$ iff $F(i)=j\in\Lambda(m)$ for some (and therefore cofinitely many) $m$.

Then whenever $\phi$ is a closed formula of rank $\leq \alpha$, the following are equivalent:
\begin{enumerate} 
  \item There is an $m$ such that $\phi\in\Lambda(m)$,
  \item $\hat\phi(\sigma_\Lambda,T)$.
\end{enumerate}
\end{lemma}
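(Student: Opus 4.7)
The approach is a straightforward induction on the rank of $\phi$, paralleling the earlier arithmetic version of this lemma, with a single genuinely new case per limit $\lambda\leq\alpha$ to handle the predicates $V_\lambda$. Throughout, I would freely use that $\Lambda$ decides every formula of rank $\leq\alpha$ (so in particular every basic formula of such rank is either itself in $\Lambda$ or its ``negation form'' is), together with the structural conditions in the definition of $\mathcal{T}_\alpha(T)$.

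First I would dispose of the inherited cases. Atomic formulas of the shape $F(i)=j$ are immediate from the definition of $\sigma_\Lambda$; other atomic formulas are decidable outright from $T$. Propositional combinations reduce at once to the inductive hypothesis on the immediate subformulas, using that $\Lambda$ decides each of them. For $\exists x\phi$, the forward direction uses the condition in the definition of $\mathcal{T}_\alpha(T)$ requiring that $\Lambda(m)\cap\mathcal{F}_{rk(\phi)}\vdash\phi(i)$ for some $i$, so by induction $\hat{\phi(i)}(\sigma_\Lambda,T)$; and for the converse, if $\exists x\phi\notin\Lambda$ then $\forall x\neg\phi$ is in some $\Lambda(m')$, which by the inductive hypothesis applied to $\neg\phi(k)$ for each $k$ rules out $\widehat{\exists x\phi}$. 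The case $\forall x\phi$ is dual.

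The genuinely new step is the basic formula $V_\lambda(n)$ of rank $\lambda$. If $n$ does not code a formula of rank strictly less than $\lambda$, both sides are false: the deduction rules give $\vdash\neg V_\lambda(n)$, so by consistency $V_\lambda(n)$ never appears in $\Lambda$, while the formula $\theta$ chosen to interpret $V_\lambda$ places into $Y$ only pairs $(\lceil\psi\rceil,rk(\psi))$ for $\psi$ of appropriate rank. If $n=\lceil\psi\rceil$ with $rk(\psi)<\lambda$, then in the forward direction the fourth clause in the definition of $\mathcal{T}_\alpha(T)$ gives $\Lambda(m)\cap\mathcal{F}_{\leq rk(\psi)}\vdash\psi$; this derivation involves only formulas whose semantic value is already pinned down by the inductive hypothesis, so $\hat\psi(\sigma_\Lambda,T)$ holds, and the definition of $\theta$ then guarantees $(\lceil\psi\rceil,rk(\psi))\in Y$ for every $Y$ with $H_\theta(\lambda,Y)$, which is exactly $\widehat{V_\lambda(\lceil\psi\rceil)}(\sigma_\Lambda,T)$. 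For the converse, since $\Lambda$ decides $V_\lambda(\lceil\psi\rceil)$, the only alternative is $V_\lambda(\lceil\neg\psi\rceil)\in\Lambda(m)$, which forces $\Lambda(m)\vdash\neg\psi$ and contradicts the inductive hypothesis for $\psi$.

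The main obstacle is this $V_\lambda$ case: one has to verify that the externally-defined semantic interpretation through the jump hierarchy $H_\theta(\lambda,Y)$ agrees precisely with the internal syntactic condition $\Lambda\cap\mathcal{F}_{\leq rk(\psi)}\vdash\psi$ imposed by membership in $\mathcal{T}_\alpha(T)$. This requires a careful choice of the defining formula $\theta$ so that ``truth at level $\delta$ of $Y$'' coincides, stage by stage below $\lambda$, with the interpretation $\hat\cdot$ on formulas of rank $\delta$ — but granted such a $\theta$ (which is the canonical one used throughout the Turing-jump hierarchy in $\mathbf{ATR_0}$), the equivalence drops straight out of the inductive hypothesis applied to the subformulas of $\psi$.
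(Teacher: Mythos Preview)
Your proposal is correct and follows the same strategy as the paper: structural induction on formulas, with the only genuinely new case being $V_\lambda(\lceil\psi\rceil)$, handled via the inductive hypothesis on $\psi$. The paper's own proof is a two-line sketch (``We proceed by induction on formulas. The only new case is when $\phi=V(\lceil\psi\rceil)$; this is easily covered by the inductive hypothesis''), and your write-up simply unpacks that sketch, including the point about matching the jump-hierarchy interpretation $H_\theta$ to the syntactic clause for $V_\lambda$.
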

\begin{proof} 
We proceed by induction on formulas.  The only new case is when $\phi=V(\lceil\psi\rceil)$; this is easily covered by the inductive hypothesis.
\end{proof}

The proof of Theorem \ref{spiti_rmpp} goes through unchanged, showing that the model $M$ satisfying \SPiti{\alpha+1} contains a path satisfying \RLPP{\alpha}, which completes the proof of Theorem \ref{rmpp_upper_bound}.

\bibliographystyle{asl}
\bibliography{AI}
\end{document}